\date{}
\author{}
\newcommand{\captionfonts}{\footnotesize}
\long\def\@makecaption#1#2{%
  \vskip\abovecaptionskip
  \sbox\@tempboxa{{\captionfonts #1: #2}}%
  \ifdim \wd\@tempboxa >\hsize
    {\captionfonts #1: #2\par}
  \else
    \hbox to\hsize{\hfil\box\@tempboxa\hfil}%
  \fi
  \vskip\belowcaptionskip}
\newcommand{\nwc}{\newcommand}
\newtheorem{prop}{Proposition}[section]
\newtheorem{lemma}[prop]{Lemma}
\newtheorem{theorem}[prop]{Theorem}
\newtheorem{corollary}[prop]{Corollary}
\nwc{\R}{\mathbb R}
\nwc{\Z}{\mathbb Z}
\nwc{\N}{\mathbb N}
\newcommand{\ignore}[1]{}
\nwc{\eps}{\varepsilon}
\nwc{\re}{Re\,}
\nwc{\wto}{\rightharpoonup}
\nwc{\ds}{\displaystyle}
\newcommand {\bedis} {\begin{displaymath}}
\newcommand {\edis} {\end{displaymath}}
\newcommand{\newbeqna} {\renewcommand {\arraystretch} {2}
                        \begin {displaymath} \begin {array}{crcl}}
\newcommand{\neweqna}{\end{array} \end {displaymath}}
\newcommand{\fbeqna}{\renewcommand {\arraystretch} {1.3}
\begin {displaymath}\begin{array}{rcll}}
\newcommand{\feqna}{\end{array}\end{displaymath}}
\newcommand {\beqna} {\begin{eqnarray*}}
\newcommand {\eqna} {\end{eqnarray*}}
\newcommand {\beqn} {\begin{eqnarray}}
\newcommand {\eqn} {\end{eqnarray}}
\begin{document}
\title{Exponential tail behaviour of self-similar solutions to Smoluchowski's coagulation 
equation}\author{%
B. Niethammer%
\footnote{Institute of Applied Mathematics,
University of Bonn, Endenicher Allee 60, 53115 Bonn, Germany}
and
J. J. L. Vel\'{a}zquez\footnote{Institute of Applied Mathematics,
University of Bonn, Endenicher Allee 60, 53115 Bonn, Germany}
}

\maketitle
\begin{abstract}
We consider self-similar solutions with finite mass to Smoluchowski's coagulation equation  for rate kernels that have homogeneity zero but are possibly singular such
as Smoluchowski's original kernel. We prove pointwise exponential decay of these solutions under rather mild assumptions on the kernel.  If the support of the kernel is sufficiently 
large around the diagonal we also proof that $\lim_{x \to \infty} \frac{1}{x} \log \Big( \frac{1}{f(x)}\Big)$ exists. In addition we prove properties of the prefactor if
the kernel is uniformly bounded below.
\end{abstract}

\section{Introduction}
\label{S.introduction}
In this article we study the decay properties of self-similar solutions to Smoluchowski's classical coagulation equation \cite{Smolu16}. This model describes the binary collision of clusters 
of size $\xi >0$ where the rate of coagulation of clusters of size $\xi$ and $\eta$ is governed by a nonnegative and symmetric rate kernel $K=K(\xi,\eta)$. Then the number density $n(t,\xi)$ 
of clusters of size $\xi$ at time $t$ is determined by the rate equation
\begin{equation}\label{smolu1}
 \partial_t n(t,\xi)=\frac 1 2 \int_0^\xi K(\eta,\xi-\eta) n(t,\xi-\eta)n(t,\eta)\,d\eta - \int_0^{\infty} K(\xi,\eta) n(t,\xi)n(t,\eta)\,d\eta \,.
\end{equation}
 The precise form of $K$ is determined by the microscopic
details of the coagulation process. In \cite{Smolu16} Smoluchowski derived the kernel $K(\xi,\eta)=K_0\big(\xi^{1/3} + \eta^{1/3}\big) \big( \xi^{-1/3} + y^{-1/3}\big)$ in the case that clusters are approximately balls in three
dimensions, diffuse by Brownian motion when they are well separated and coagulate quickly when they get close.  The coagulation equation \eqref{smolu1} has since been used in  many different application areas, such as
aerosol physics, astronomy or population dynamics, and correspondingly one finds a large variety of different kernels in the respective literature (see e.g. \cite{Drake72,Aldous99}).

It is well-known by now that the behaviour of solutions to \eqref{smolu1} depends on the growth behaviour of $K$ for large clusters. More precisely, solutions to \eqref{smolu1} only conserve
mass for all times if the kernel $K$ grows at most linearly at infinity. If $K$ grows faster than linearly at infinity, the phenomenon of gelation occurs, that is, roughly speaking, infinitely large clusters are 
created at finite time and as a consequence, there is a finite time after which the first moment of $n$ decreases  \cite{McLeod62a,Jeon98,EMP02}.

In this article we are not concerned with the gelation phenomenon, but will consider kernels that grow slower than linearly. In this case it has been proved for a large class of kernels \cite{Norris99,LauMisch02,LauMisch04}
that for initial data $n_0$ with finite first moment there exists a unique global solution to \eqref{smolu1} that conserves the mass for all times.

A topic of particular interest in the analysis of coagulation equations is the long-time behaviour of solutions. Since most kernels $K$ are homogeneous one expects due to a corresponding
 scale invariance of equation \eqref{smolu1}, that solutions of  \eqref{smolu1} converge to a unique self-similar solution as $t \to \infty$.  This so-called scaling hypothesis is however so far only well-understood for the
so-called solvable kernel, that is for $K=const., K=\xi+\eta$ and $K=\xi\eta$, for which explicit solution formulas are available. A complete analysis of the large-time behaviour of solutions for those
kernels has been given in \cite{MePe04}. More precisely, it has been shown that \eqref{smolu1} has a unique self-similar solution with finite mass and in addition a family of self-similar solutions with infinite mass,
so-called fat-tail solutions. The domains of attraction of each of these self-similar solutions has been characterized. Roughly speaking, initial data with a certain decay behaviour at infinity produce
solutions that converge to the self-similar solution with the same decay behaviour.

For non-solvable kernels much less is understood about the large-time behaviour and even to understand self-similar solutions turns out to be a formidable task. Only quite recently, the first existence
results of self-similar solutions for homogeneous kernels with degree $\gamma<1$ have become available \cite{FouLau05,EMR05} and certain properties of the corresponding self-similar profiles
have been established \cite{FouLau06a,EsMisch06,CanMisch11,NV11b}. Also, the existence of self-similar solutions with fat tails has been proved for a range of kernels \cite{NV11a,NV12a}, but one of the
major open problems is still the uniqueness of self-similar solutions either with finite mass, or, in the case of fat tails,  with a given power-law decay behaviour. One idea to prove uniqueness has been to 
characterize the asymptotics
of self-similar solutions for small cluster sizes in order to be able to reformulate the uniqueness problem as an initial value problem. However, at least to our knowledge, this strategy has
not yet been successful. In recent work we were able to prove the first uniqueness result for self-similar solutions with finite mass for kernels that are in a certain sense close to the constant one 
\cite{NV13b}.
One key ingredient of the proof is a precise understanding of the decay behaviour of solutions at infinity which is one of the results of the present paper.
In addition, we also prove here more detailed estimates for self-similar solutions (for kernels with homogeneity zero) that are of interest by itself but might also be helpful in proving uniqueness of self-similar
solutions for kernels that are not necessarily close to the constant one. 

In order to formulate our  results we first recall the equation for self-similar solutions to \eqref{smolu1} with finite mass.
For kernels with homogeneity zero such solutions are of the form $n(t,\xi)=t^{-2} f(x)$ with $x=\xi/t$, and the so-called self-similar profile $f$ satisfies
\begin{equation}\label{eq1}
 -xf'(x) - 2f(x) = \tfrac 1 2 \int_0^x K(x-y,y) f(x-y)f(y)\,dy - f(x) \int_0^{\infty} K(x,y) f(y)\,dy \,
\end{equation}
with
\begin{equation}\label{eq2}
 \int_0^{\infty} x f(x)\,dx =M\,.
\end{equation}
In general however, due to possible divergences of $f$ as $x \to 0$, the integrals in \eqref{eq1} are not necessarily finite such that one has to go over to a weak formulation. 
Under the assumption that $\lim_{x\to 0} x^2 f(x)=0$ one can integrate \eqref{eq1} to obtain that $f$ satisfies
\begin{equation}\label{eq1b}
x^2f(x) = \int_0^x \int_{x-y}^{\infty} K(y,z) y f(z)f(y)\,dz\,dy\,.
\end{equation}
In \cite{vanDoErnst88} the authors gave a self-consistent analysis of  solutions to  \eqref{eq2}, \eqref{eq1b}. More precisely, under the assumption that a nontrivial, nonnegative 
solution  exists, they derived detailed properties of the respective profiles for large and small $x$ for a large range of different kernels.
As indicated above the first rigorous existence results for \eqref{eq2}, \eqref{eq1b} appeared  much later. In \cite{FouLau05} the authors prove for a large range of kernels of homogeneity
$\gamma<1$, that there exists a weak self-similar profile, that is a nonnegative function $f \in L^1(x^2\,dx)$ with $xyK(x,y)f(x)f(y) \in L^1((0,\infty)^2)$ and such that
\eqref{eq1b} is satisfied for almost all $x$. In addition, the self-similar profiles satisfy certain moment estimates that are in agreement with the expectation that profiles with finite mass
decay exponentially.  Another strategy was employed in \cite{EMR05} to establish a similar result but for a smaller class of kernels. In further work, regularity properties
of self-similar profiles were derived, such as in \cite{FouLau06a} for the sum kernel $x^{\lambda}+y^{\lambda}$ with $\lambda \in (0,1)$. More precisely, it was shown that every self-similar profile
$f$ satisfies $f \in C^1(0,\infty)$ and decays exponentially.  For kernels of the form $K(x,y)=x^ay^b+x^by^a$ with $-1 \leq a\leq b<1$ and $a+b \in (-1,1)$ it was proved later in \cite{CanMisch11} that
each self-similar profile is infinitely differentiable.

In what follows we assume that there exists a  function $f \in L^1_{loc}(0,\infty)$ that is nonnegative,  $xf \in L^1(0,\infty)$ and satisfies \eqref{eq1b} for almost all $x>0$. Furthermore, we assume for our convenience
that $f \in C^0(0,\infty)$. Strictly speaking, such a result has been proved only for a smaller class of kernels but it is not difficult to derive, once one has established 
certain properties of the profile as $x \to 0$ such
as Lemma \ref{L.negativemoment} below. One can then proceed as in Chapter 4.2 of \cite{NV12a} to first prove by an iterative argument that $f \in L^{\infty}_{loc}(0,\infty)$ from which continuity follows rather easily. 
 
Our goal in this paper is to  prove several results  on the details of the decay behaviour of such self-similar profiles. The more detailed the results are, the stronger assumptions we need on the kernel $K$. We emphasize, however, that
all but the last result, Theorem \ref{T4}, hold in particular for kernels such as Smoluchowski's original one. We feel that this is important, since many kernels that appear in the applied literature, are of 
a similar form.

Throughout this paper we  assume that $K$ is homogeneous of degree zero  and  satisfies for some $\alpha \in [0,1)$ that
\begin{equation}\label{Kassump1}
 K(x,y) \leq K_0 \Big(\Big(\frac{x}{y}\Big)^{\alpha} + \Big(\frac{y}{x}\Big)^{\alpha}\Big) \qquad \mbox{ for all } x,y \in (0,\infty)
\end{equation}
and for some $\kappa\in (0,1]$ that
\begin{equation}\label{Kassump2}
\min_{|x-y|\leq \kappa(x+y)} K(x,y) \geq k_0>0\,.
\end{equation}

Our first main result establishes pointwise exponential decay of self-similar profiles.
\begin{theorem}\label{T1}
Let $f$ be a solution to \eqref{eq2}-\eqref{eq1b}. Then there exist positive constants  \linebreak
$C_1,C_2,\alpha_1,\alpha_2$ such that
\[
 C_1 e^{-\alpha_1 x} \leq f(x) \leq C_2 e^{-\alpha_2 x}\qquad \mbox{ for all } x \geq 1\,.
\]
\end{theorem}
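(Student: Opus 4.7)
My plan is to prove the two inequalities by separate arguments. For the upper bound I would first establish a finite exponential moment $\int_0^\infty e^{\beta x}xf(x)\,dx<\infty$ for some sufficiently small $\beta>0$, and then convert this global moment control into the pointwise bound via the integral equation \eqref{eq1b}. For the lower bound I would exploit the diagonal positivity assumption \eqref{Kassump2} together with a propagation-of-positivity iteration.

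\textbf{Upper bound.} Testing \eqref{eq1} against the truncated multiplier $\varphi_R(x)=\min(e^{\beta x},e^{\beta R})$ and using the usual coagulation identity yields
\[
\int_0^\infty (x\varphi_R'-\varphi_R)f\,dx=\tfrac12\int_0^\infty\!\!\int_0^\infty[\varphi_R(x+y)-\varphi_R(x)-\varphi_R(y)]K(x,y)f(x)f(y)\,dx\,dy.
\]
The elementary inequality $e^{\beta(x+y)}-e^{\beta x}-e^{\beta y}\leq\beta^2xy\,e^{\beta(x+y)}$ combined with the upper bound \eqref{Kassump1} on $K$ reduces the right-hand side to a quantity proportional to $\beta^2$ times a product of truncated moments. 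Choosing $\beta$ small, splitting the integrals at $x=1$ to absorb the negative powers $x^{-\alpha}$ using the already known a priori bounds of Lemma~\ref{L.negativemoment} and $xf\in L^1$, and then letting $R\to\infty$ by a continuity/bootstrap argument in $\beta$ (showing that the set $\{\beta\ge0:\int e^{\beta x}xf<\infty\}$ is both open and closed in a suitable range), one obtains $M_\beta:=\int_0^\infty e^{\beta x}xf(x)\,dx<\infty$ for $\beta>0$ small. To pass from this moment bound to a pointwise bound, multiply \eqref{eq1b} by $e^{\beta' x}$ with $\beta'<\beta$, observe that $e^{\beta' x}=e^{\beta' y}e^{\beta'(x-y)}\le e^{\beta' y}e^{\beta' z}$ on the region of integration (since $z\ge x-y$), and estimate the kernel using \eqref{Kassump1} to obtain
\[
x^2e^{\beta' x}f(x)\le K_0\int_0^x\!\!\int_{x-y}^\infty\Bigl[\bigl(\tfrac{y}{z}\bigr)^\alpha+\bigl(\tfrac{z}{y}\bigr)^\alpha\Bigr]y\,e^{\beta' y}f(y)\,e^{\beta' z}f(z)\,dz\,dy.
\]
A dyadic split of the $y$-integral into $y\in(0,1]$ and $y\in[1,x]$ controls the $y^{-\alpha}$ singularity, and the remaining integrand is bounded by the finite moment $M_\beta$, giving $f(x)\le C_2 e^{-\alpha_2 x}$ for $x\ge1$.

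\textbf{Lower bound.} Continuity of $f$ together with $\int xf\,dx=M>0$ guarantees the existence of an interval $[a,b]\subset(0,\infty)$ and $\delta>0$ with $f\geq\delta$ on $[a,b]$. In \eqref{eq1b} restrict the $(y,z)$-integration to pairs with $y,z\in[a,b]$ and $|y-z|\le\kappa(y+z)$; on this set \eqref{Kassump2} gives $K\ge k_0$, so
\[
x^2 f(x)\geq k_0\delta^2\int_{I(x)}\!\!\int_{J(x,y)}y\,dz\,dy\qquad\text{for }x\in[2a,2b],
\]
where $I(x),J(x,y)$ are the portions of $[a,b]\times[a,b]$ compatible with the constraint $z\ge x-y$ and the diagonal-strip condition. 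Provided $\kappa,a,b$ are chosen compatibly, this lower bound is nontrivial for $x$ in a shifted interval $[a_1,b_1]$, producing $f\ge\delta_1$ there. Iterating the construction, one propagates the lower bound through a telescoping sequence of overlapping intervals whose union eventually covers $[1,\infty)$. Because each step introduces a uniformly bounded geometric factor and advances $x$ by a fixed multiplicative amount, the resulting lower bound takes the exponential form $f(x)\geq C_1 e^{-\alpha_1 x}$.

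\textbf{Main obstacle.} The hardest step is closing the exponential moment estimate: the naive identity couples $M_\beta$ to higher moments of the form $\int x^{1\pm\alpha}e^{\beta x}f\,dx$, which are not controlled a priori, while the potentially singular nature of $K$ at the origin (recall $K$ is only required to satisfy \eqref{Kassump1} with $\alpha\in[0,1)$) complicates the weak-formulation identity and forces the truncation argument. Once the moment is obtained, the remaining upper-bound computation and the lower-bound iteration are essentially mechanical.
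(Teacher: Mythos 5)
Your lower bound argument is essentially the paper's: positivity on some interval from continuity and $\int xf=M>0$, then propagation through the diagonal region where \eqref{Kassump2} applies, with the quadratic recursion $\delta_{n+1}\sim c\,\delta_n^2$ under interval doubling producing exactly the rate $\log\delta_n\sim -C2^n\sim -Cx_n$. That part is sound in outline (the paper's Lemmas \ref{L.auxiliary} and \ref{L.lowerbound} implement the same idea, using unit intervals and the sum over shifted pairs to control the $1/R$ prefactor, and one does need some care to make consecutive reachable intervals overlap, which you only gesture at).

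The upper bound, however, has a genuine gap, and it sits exactly where you place your ``main obstacle.'' The identity obtained by testing with $e^{\beta x}$ gives, after your elementary inequality and \eqref{Kassump1}, something of the form $\beta M_\beta\lesssim C+C\beta^2 N_\beta P_\beta$ with $N_\beta=\int x^{1+\alpha}e^{\beta x}f\,dx$ and $P_\beta=\int x^{1-\alpha}e^{\beta x}f\,dx$. This does not close: $N_\beta$ is a strictly higher moment than $M_\beta$ (for a profile with $f\sim e^{-\beta_c x}$ one has $M_\beta\sim(\beta_c-\beta)^{-2}$ but $N_\beta P_\beta\sim(\beta_c-\beta)^{-4}$, so the inequality is satisfied by the true solution without forcing any bound), and even in the case $\alpha=0$ the resulting relation $\beta M_\beta\le C+C\beta^2M_\beta^2$ admits arbitrarily large solutions. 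The ``open and closed in $\beta$'' bootstrap is therefore not a proof device here: closedness of the set $\{\beta:M_\beta<\infty\}$ with a uniform bound is precisely the statement you are trying to establish, and interpolating $x^{1+\alpha}\le C_\epsilon x+\epsilon x^2$ only converts the problem into a differential inequality that bounds $\partial_\beta M_\beta$ from \emph{below}, which is consistent with finite-$\beta$ blow-up. The paper's proof of Lemma \ref{L.ublargex} replaces this by an induction on polynomial moments, $M(\gamma)\le\gamma^\gamma e^{A\gamma}$, whose entire difficulty is the near-diagonal region $y\in[x/\gamma,x]$; handling it requires the multi-scale decomposition in the sequences $\zeta_n,\theta_n$ and the analysis of $(\max-\min)\psi_{\theta_n}$, which is the idea missing from your sketch. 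A secondary gap: when converting the moment bound into a pointwise bound via \eqref{eq1b}, the region $y$ near $x$, $z$ near $0$ produces $\int_{x-y}^1 z^{-\alpha}f(z)\,dz\sim(x-y)^{-\alpha}$ (not a constant), so one is left with $f(x)\le Ce^{-Ax}+\frac{C}{x}\int_{x-1}^x(x-y)^{-\alpha}f(y)\,dy$; this is not ``bounded by the finite moment'' but must be iterated $L$ times with $(L+1)\alpha<L$ and closed by Gronwall, as the paper does.
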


Theorem \ref{T1} is proved in Section \ref{S.bounds}, more precisely Lemma \ref{L.ublargex} gives the upper bound, while Lemma \ref{L.lowerbound} provides the lower bound.

If we write 
\[f(x)= e^{-xa(x)}, \]
then Theorem \ref{T1} gives that $0<\alpha_1\leq a(x)\leq \alpha_2$. Our next goal is to prove that $\lim_{x \to \infty} a(x)$ exists. 
For this result we need further regularity of the self-similar solution $f$, which is a nontrivial issue since a standard bootstrap argument does not work for equation \eqref{eq1b}. 
Some regularity results are available (cf. \cite{FouLau06a,CanMisch11}) and while the proofs might also apply to the kernels that we consider here, we find it convenient to 
provide a self-consistent proof that $f \in BV_{loc}(0,\infty)$ in Section \ref{S.bv}. 
For this result we  need that  $K$ is sufficiently regular. We assume that $K$ is differentiable and 
\begin{equation}\label{Kdiff}
\Big | \frac{\partial}{\partial x} K(x,y) \Big| \leq C \frac{1}{x} \Big( \Big(\frac{x}{y}\Big)^{\alpha} + \Big(\frac{y}{x}\Big)^{\alpha}\Big)
\qquad \mbox{ for all } x,y>0\,.
\end{equation}
In order to show that $a$ has a limit as $x \to \infty$, we 
also need that the region in which $K$ is bounded below is sufficiently large. More precisely, we have the following result (cf. Proposition \ref{P.limit}).
\begin{theorem}\label{T2}
 Let $K$ satisfy \eqref{Kassump1}, \eqref{Kassump2} with $\kappa >1/3$, as well as \eqref{Kdiff}. Then $\lim_{x \to \infty} a(x)$ exists.
\end{theorem}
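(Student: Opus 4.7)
The plan is to show $\limsup_{x\to\infty}a(x)=\liminf_{x\to\infty}a(x)$. Denote these by $\bar\alpha,\underline\alpha$; both lie in $[\alpha_1,\alpha_2]$ by Theorem \ref{T1}. Set $h(x):=-\log f(x)=x\,a(x)$. The regularity input is $f\in BV_{loc}(0,\infty)$, proved using \eqref{Kdiff} in Section \ref{S.bv}, which together with the continuity of $f$ gives a uniform local modulus for $h$ and keeps the ratios $f(y)/f(y')$ bounded when $|y-y'|\le 1$.

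The key estimate is a near-subadditivity of $h$: for each $\lambda\in((1-\kappa)/2,(1+\kappa)/2)$ and $x$ sufficiently large,
\[
h(x)\le h(\lambda x)+h((1-\lambda)x)+\log x+C(\lambda).
\]
I would establish this by bounding the double integral in \eqref{eq1b} from below, restricted to a unit rectangle centred at $(y,z)=(\lambda x,(1-\lambda)x)$. Since $|2\lambda-1|<\kappa$, the condition $|y-z|\le\kappa(y+z)$ holds throughout the rectangle, so \eqref{Kassump2} gives $K\ge k_0$; the $BV$ control keeps $f(y)f(z)$ comparable to $f(\lambda x)f((1-\lambda)x)$. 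The lower bound on the integral is then a multiple of $x\,f(\lambda x)f((1-\lambda)x)$, which after taking logs gives the displayed inequality.

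The hypothesis $\kappa>1/3$ drives a Fekete-type conclusion. Fix $\epsilon>0$ and pick $X_n\to\infty$ with $a(X_n)\to\underline\alpha$. Iterating the subadditivity with $\lambda=1/2$ yields $h(2^kX)\le 2^kh(X)+(2^k-1)\log X+O(2^k)$, hence $a(2^kX_n)\le a(X_n)+o(1)$ as $n\to\infty$ for each fixed $k$, so $a$ stays close to $\underline\alpha$ on the multiplicative sequence $\{2^kX_n\}$. The extra latitude in the split ratios afforded by $\kappa>1/3$, i.e.\ $(1-\kappa)/2<1/3$ so that the admissible interval of splits strictly contains $(1/3,2/3)$ and permits ratios $y/z$ exceeding $2$, enables combinations $y_1+y_2+\cdots$ via unequal splits, where the $y_k$ are chosen near the $2^{k_j}X_{n_j}$; the $BV$ regularity then extends this to a neighbourhood of every large $x$. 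One obtains $a(x)\le\underline\alpha+2\epsilon$ for all sufficiently large $x$, and letting $\epsilon\downarrow 0$ gives $\bar\alpha\le\underline\alpha$, whence the limit exists. The main obstacle is this last propagation step: one must sum the log-correction terms in the iterated subadditivity over a decomposition $x=\sum y_k$ with each $y_k$ of prescribed size and show the total is $o(x)$; the extra width in the split-ratio interval provided by $\kappa>1/3$, combined with the $BV$ regularity used to interpolate between the discrete scales $2^{k_j}X_{n_j}$, is precisely what makes this bookkeeping feasible.
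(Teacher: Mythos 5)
Your first two ingredients are sound and match the paper: the near-subadditivity $h(x)\le h(\lambda x)+h((1-\lambda)x)+\log x+C$ for admissible splits is exactly the paper's Doubling Lemma (Lemma \ref{L.limit1}, stated for $\lambda=1/2$ but proved from the same rectangle-localisation of \eqref{eq1b} under \eqref{Kassump2}), and the control of $a$ along a dyadic sequence $x_n=2+\beta 2^n$ with $a(x_n)\le a^*+2\eps$ is the paper's Claim 1. One caveat already here: plain $BV_{loc}$ does \emph{not} give you that $f(y)/f(y')$ is bounded uniformly in $x$ for $|y-y'|\le 1$; the paper needs the quantitative one-sided bound $\int_x^{x+1}(a')_-\le C/x+C_Re^{-bx}$ of Lemma \ref{L.lowerasymp2}, whose proof (via the differentiated/weak form of the equation and a Gronwall iteration) is a substantial part of the argument and is where \eqref{Kdiff} actually enters.

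The genuine gap is the final propagation step, which you yourself flag as ``the main obstacle'' but do not resolve, and which as described cannot work. Subadditivity lets you bound $h(x)$ by $h(y)+h(x-y)$ only when \emph{both} summands are controlled. Given an arbitrary large $x$, the admissible window $\big(\frac{1-\kappa}{2}x,\frac{1+\kappa}{2}x\big)$ does contain a good point $y=2^kX_n$ (this is where $\kappa>1/3$, i.e.\ endpoint ratio $\frac{1+\kappa}{1-\kappa}>2$, is used), but the complementary piece $z=x-y$ is then forced and there is no reason for it to be good; iterating ``unequal splits'' just reproduces the same problem at a smaller scale and never terminates. Nor can you fatten the good set $\{2^kX_n\}$ into intervals of length comparable to $x$: Lemma \ref{L.lowerasymp2} controls only $(a')_-$, so smallness of $a$ propagates only an $O(1)$ distance to the left of a good point, not across a macroscopic interval. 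The paper closes the argument by running the \emph{contrapositive} of your subadditivity, the Flatness Lemma \ref{L.limit2}: at a point $\bar x_n$ where $a$ is within $\sigma$ of its running maximum $M_\delta(\bar x_n)\approx b^*=\limsup a$, \emph{every} admissible split point $y\in((\theta+\eps)\bar x_n,(1-\theta-\eps)\bar x_n)$ must satisfy $a(y)\ge b^*-2\eps$ (otherwise the convolution integral would force $f(\bar x_n)$ to be too large). Since $\theta<1/3$ makes this interval have endpoint ratio $>2$, it must contain one of the dyadic points $x_m$ where $a(x_m)\le a^*+2\eps$, which is the desired contradiction. This one-point-to-interval lower-bound propagation is the idea your proposal is missing, and without it (or a substitute) the proof does not close.
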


We then turn out attention to the prefactor. 
To that aim we define the function $u\colon (0,\infty) \to (0,\infty)$ via 
\begin{equation}\label{udef}
 f(x)=u(x) e^{-a^*x}\qquad \mbox{ where } \qquad a^*=\lim_{x \to \infty} a(x)\,.
\end{equation}
Under the additional assumption that $K$ is uniformly bounded below we  prove the following result in 
Section \ref{S.preex}.
\begin{theorem}\label{T3}
 Let $K$ satisfy \eqref{Kassump1}, \eqref{Kassump2} with $\kappa=1$, as well as \eqref{Kdiff}. Define $u_{R}(x):=u(Rx)$. Then there exists a sequence $R_j \to \infty$ and a 
nonnegative measure $\mu$ such that $u_{R_j} \wto \mu$ on $(0,\infty)$ as $j \to \infty$. The limit $\mu$ is nontrivial and satisfies the equation
\begin{equation}\label{muequation}
 x \mu(x) = \frac{1}{2a^*} \int_0^x K(y,x-y)\mu(x-y)\mu(y)\,dy 
\end{equation}
in the distributional sense (cf. \eqref{weakmuequation}). 
\end{theorem}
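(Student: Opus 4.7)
The plan is to derive a rescaled integral equation for $u$, establish weak-$*$ compactness of the family $\{u_R\}_{R>0}$ viewed as Radon measures on $(0,\infty)$, and then pass to the limit to produce $\mu$. Substituting $f(x)=u(x)e^{-a^* x}$ into \eqref{eq1b} and making the change of variable $z=x-y+w$ in the inner integral yields
\[
x^2 u(x)=\int_0^x\!\!\int_0^\infty K(y,x-y+w)\,y\,u(y)\,u(x-y+w)\,e^{-a^* w}\,dw\,dy.
\]
Rescaling by $x=R\xi$ and $y=Rs$ and using that $K$ is homogeneous of degree zero, so that $K(Rs,R(\xi-s)+w)=K(s,\xi-s+w/R)$, produces
\[
\xi^2 u_R(\xi)=\int_0^\xi\!\!\int_0^\infty K\bigl(s,\xi-s+\tfrac{w}{R}\bigr)\,s\,u_R(s)\,u_R\bigl(\xi-s+\tfrac{w}{R}\bigr)\,e^{-a^* w}\,dw\,ds, \qquad(\star)
\]
which is the identity in which we wish to pass to the limit.

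The key technical step, and the main obstacle, is obtaining uniform local bounds $\int_a^b u_R(\xi)\,d\xi\le C(a,b)$ for every $0<a<b<\infty$. The pointwise bounds of Theorem \ref{T1} alone are not sufficient: they only force $\alpha_1\ge a^*\ge\alpha_2$ and therefore allow $u$ to grow or decay exponentially at the crude level. However, Theorem \ref{T2} yields $a(x)\to a^*$, hence $u(x)=\exp\bigl(-x(a(x)-a^*)\bigr)$ is subexponential in both directions. I would then bootstrap using $(\star)$: the lower bound $K\ge k_0$, which is available throughout $(0,\infty)^2$ because $\kappa=1$, together with the exponential factor $e^{-a^* w}$ and the subexponential control on $u$, allows one to turn the integral equation into a self-improving estimate for averages of $u$ over intervals of length of order $R$, yielding the required uniform local bound. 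Once this is in place, Banach--Alaoglu delivers a subsequence $u_{R_j}\,d\xi\wto\mu$ in the vague topology of Radon measures on $(0,\infty)$.

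To pass to the limit in $(\star)$, note that $K(s,\xi-s+w/R)\to K(s,\xi-s)$ as $R\to\infty$ for almost every $(s,w)$ by continuity of $K$ off the diagonal, and the factor $e^{-a^* w}$ produces uniform integrability in $w$. The product $u_R(s)\,u_R(\xi-s+w/R)$ requires more care: I would mollify one factor at a scale vanishing slower than $1/R$, use the vague convergence of the other against the resulting continuous test function, and then remove the mollification, employing the $BV_{loc}$ regularity established in Section \ref{S.bv} together with the differentiability assumption \eqref{Kdiff} to control the error terms. Performing the $w$-integration contributes a factor $1/a^*$, and symmetrizing in $s\leftrightarrow\xi-s$ using $K(s,\xi-s)=K(\xi-s,s)$ converts the factor $s$ into $\xi/2$, yielding the weak form of \eqref{muequation}. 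Nontriviality of $\mu$ is then obtained by testing $(\star)$ at a fixed bounded scale: because $\kappa=1$ ensures $K\ge k_0>0$ uniformly, and because the Theorem \ref{T1} lower bound together with the sharpness of $a^*$ as the exponential rate prevents both sides of $(\star)$ from vanishing in the limit, $\mu$ cannot be identically zero.
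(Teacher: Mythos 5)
Your overall skeleton (rescale the integral identity, get uniform local mass bounds for $u_R$, extract a vague limit, pass to the limit in the equation) matches the paper's, but the two load-bearing steps are asserted rather than proved, and the nontriviality argument as you state it is actually false. First, the uniform bound $\int_a^b u_R\,d\xi\le C(a,b)$ does not follow from a generic ``self-improving estimate for averages'': the paper obtains it (Lemma \ref{L.uupperbound}) by passing to the Laplace transform $U(q)=\int_1^\infty e^{-qx}u(x)\,dx$ and deriving, from the lower bound $K\ge k_0$ on all of $(0,\infty)^2$ (this is where $\kappa=1$ enters), the Riccati-type inequality $-\partial_q U\ge c\,U^2$ for $q\le a^*/2$; if $U(q_0)>C/q_0$ for some $q_0$, integrating this inequality forces $U$ to blow up at a positive value of $q$, contradicting the fact that $U$ is finite for all $q>0$, whence $U(q)\le C/q$ and the local averages follow by taking $q=1/R$. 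Some such mechanism is indispensable and is absent from your sketch.

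Second, and more seriously, nontriviality of $\mu$ cannot be deduced from the Theorem \ref{T1} lower bound plus ``the sharpness of $a^*$''. The bound $f\ge C_1e^{-\alpha_1 x}$ only gives $u(x)\ge C_1e^{(a^*-\alpha_1)x}$, which is an exponentially decaying lower bound and hence useless for the rescaled limit; a profile with $u(x)=e^{-\sqrt{x}}$ is perfectly consistent with $a(x)\to a^*$ and with all pointwise bounds, yet yields $u_{R_j}\wto 0$. Note also that $\mu\equiv 0$ solves the limit equation, so ``both sides of $(\star)$ not vanishing'' is precisely what must be proved. The paper needs two further genuinely nontrivial inputs here: Lemma \ref{L.unonintegrable} (namely $\int xu\,dx=\infty$, proved by observing that otherwise the moment bootstrap of Lemma \ref{L.ublargex} would apply to $u$ and give exponential decay of $u$, contradicting the definition of $a^*$), and the matching lower bound $U(q)\ge C/q$ of Lemma \ref{L.munontrivial}, which rests on an upper Riccati-type bound $-\partial_q U\le K_1\bigl(UV^{1/(1-\alpha)}+U+1\bigr)$ involving the auxiliary transform $V(q)=\int_1^\infty x^{-\alpha}u(x)e^{-qx}\,dx$, a bound $V(q)\le Cq^{\alpha-1}$, and an iterative contradiction argument against the non-integrability of $xu$. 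None of this appears in your proposal. (The limit passage in the equation is essentially as in the paper, though you should also control the region where one argument of $u_R$ is of size $O(1/R)$, where $u$ need not be locally integrable; the paper handles this with the moment bound \eqref{ubsmallx} and a cutoff at $z=L/R$.)
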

Ideally, we would like to know that $\mu$ is uniquely determined (and hence a constant). However, to prove such a result turns out to be difficult and we are currently only able to derive 
such a uniqueness result (as for the full coagulation equation) only for kernels that are close to constant in the sense outlined below.

\begin{theorem}\label{T4}
Let  $K$ satisfy in addition to \eqref{Kassump1}-\eqref{Kassump2} that  
\begin{equation}\label{Kassump4}
 -\eps \leq K-2 \leq \eps \Big( \Big(\frac{x}{y}\Big)^{\alpha} + \Big(\frac{y}{x}\Big)^{\alpha} \Big)
\end{equation}
and 
\begin{equation}\label{Kassump5}
\Big | \frac{\partial}{\partial x} K(x,y) \Big| \leq C\eps \frac{1}{x} \Big( \Big(\frac{x}{y}\Big)^{\alpha} + \Big(\frac{y}{x}\Big)^{\alpha}\Big)
\qquad \mbox{ for all } x,y>0\,.
\end{equation}
Then, if $\eps>0$ is sufficiently small, the only nontrivial solution to \eqref{muequation} is 
the constant one, i.e. $\mu \equiv 2a^*/\int_0^1 K(s,1-s)\,ds$.
\end{theorem}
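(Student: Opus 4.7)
The plan is to expand $\mu = c + \nu$ around the constant solution $c = 2a^*/\int_0^1 K(s,1-s)\,ds$ and to argue, via a contraction fixed-point, that under \eqref{Kassump4}--\eqref{Kassump5} the perturbation $\nu$ must vanish. Inserting $\mu = c + \nu$ into \eqref{muequation} and using the homogeneity-zero identity $\int_0^x K(y,x-y)\,dy = (2a^*/c)\,x$ (which cancels the constant-in-$\nu$ terms) together with the symmetry of $K$ (which merges the two linear cross-terms), one obtains
\begin{equation*}
 x\,\nu(x) \;=\; \frac{c}{a^*}\int_0^x K(y,x-y)\,\nu(y)\,dy \;+\; \frac{1}{2a^*}\int_0^x K(y,x-y)\,\nu(y)\nu(x-y)\,dy.
\end{equation*}

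Next I would split $K = 2 + (K-2)$ and isolate the principal linear operator $L_0 \nu(x) := \frac{2c}{a^*x}\int_0^x\nu(y)\,dy$, whose coefficient satisfies $2c/a^* = 2 + O(\eps)$; every remaining contribution in the equation for $\nu$ is either linear with $O(\eps)$ coefficient (by the upper half of \eqref{Kassump4}) or quadratic with $O(\|\nu\|)$ coefficient (controlled using the pointwise bounds from Theorem \ref{T1}). The equation is then recast as a fixed-point problem $\nu = \mathcal{T}(\nu)$ on a Banach space $X$ of bounded, suitably weighted, measurable functions on $(0,\infty)$, on which one checks that $\mathcal T$ has Lipschitz constant at most $C\eps + C\|\nu\|_X$, strictly less than one once $\eps$ is small and $\nu$ is a priori small. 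The a priori smallness of $\nu$ follows from Theorems \ref{T1}--\ref{T3} (which pin down the size of $u = f\,e^{a^*x}$ and hence of $\mu$) together with $c \to a^*$ as $\eps \to 0$. Iterating the contraction starting from any candidate $\nu$ then forces $\nu \equiv 0$.

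The main obstacle is the scale invariance of \eqref{muequation}: since $K$ is homogeneous of degree zero, $\mu \mapsto \mu(\lambda\cdot)$ sends solutions to solutions, and a direct computation shows that $\mu(x) = c\,e^{-dx}$ satisfies \eqref{muequation} (and its weak form) for every $d \ge 0$. Strict uniqueness in the full class of distributional solutions is therefore false, and uniqueness of the constant $\mu$ must exploit additional structural information on the admissible $\mu$ — specifically, the content of the weak formulation \eqref{weakmuequation} combined with the fact that the $\mu$ of interest arises as the weak limit $u_{R_j} \wto \mu$ in Theorem \ref{T3}, thereby inheriting uniform lower bounds on compact subsets of $(0,\infty)$ from the corresponding lower bounds on $u = f\,e^{a^*x}$; such bounds forbid any exponentially decaying $\mu$. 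Establishing coercivity of $L_0$ on the quotient by this scaling family, in the appropriate weighted norm, is where the derivative control \eqref{Kassump5} enters and constitutes the central technical step.
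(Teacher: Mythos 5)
Your setup is right as far as it goes --- the expansion $\mu=\mu_0+\nu$, the resulting identity
\begin{equation*}
x\,\nu(x)=\frac{\mu_0}{a^*}\int_0^x K(y,x-y)\nu(y)\,dy+\frac{1}{2a^*}\int_0^x K(y,x-y)\nu(y)\nu(x-y)\,dy
\end{equation*}
and the observation that $\mu_0e^{-dx}$ solves \eqref{muequation} for every $d\ge 0$, so that uniqueness can only hold in the class of $\mu$ satisfying the two-sided bounds \eqref{muapriori1b}--\eqref{muapriori1c} --- all match the paper. But the proposal stops exactly where the proof begins. Your principal operator $L_0\nu(x)=\frac{2}{x}\int_0^x\nu(y)\,dy$ (coefficient $2\mu_0/a^*\cdot a^*/2\approx 2$) has operator norm of order one, not $O(\eps)$, and $I-L_0$ is \emph{not} invertible: $L_0$ maps $x^\beta$ to $\frac{2}{\beta+1}x^\beta$, so $\nu(x)=x$ lies in its kernel --- precisely the tangent direction to the family $d\mapsto\mu_0e^{-dx}$ at $d=0$. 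Consequently the map $\mathcal T$ you describe cannot have Lipschitz constant $C\eps+C\|\nu\|_X$; to get a contraction you must first invert $I-L_0$ on a complement of this kernel and then show that the admissible $\nu$ has no component along it. You name this ("coercivity of $L_0$ on the quotient by the scaling family") as "the central technical step" but do not carry it out, and the qualitative exclusion of the exact solutions $\mu_0e^{-dx}$, $d>0$, by the lower bound does not by itself yield the needed quantitative coercivity.

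The paper resolves exactly this point by passing to the Laplace transform. It first proves the a priori bound $|q\tilde U(q)-1|\le C\eps$ for all $q>0$ (Step \eqref{Wapriori}), by an ODE comparison for $-\partial_q\tilde U=\tilde U^2+{\cal M}$ with ${\cal M}=O(\eps/q^2)$; this uses only the equation and the bounds \eqref{muapriori1b}--\eqref{muapriori1c}, not Theorems \ref{T1}--\ref{T3} (which, note, give constants $C_1,C_2,\alpha_1,\alpha_2$ that are not $\eps$-close, so they do not yield smallness of $\nu$). In transform variables the linearization becomes the ODE $-\partial_qV=\frac{2\mu_0}{q}V+\dots$, which is integrated with integrating factor $q^{2\mu_0}$; the homogeneous mode $q^{-2\mu_0}\sim q^{-2}$ is the Laplace image of the kernel element $x$, and it is eliminated because the a priori bound forces $q^{2\mu_0}V(q)\to 0$ as $q\to 0$, fixing the integration constant to be zero. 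This is the quantitative substitute for your "coercivity on the quotient." The remaining error terms are then estimated in the norm $\|V\|=\sup_q|qV(q)|$ by a compactness/contradiction argument (using \eqref{Kassump5} for compactness of the kernels $K_n$), giving $\|V\|\le C\eps\|V\|+C\|V\|^2$ and hence $V\equiv 0$. Without an analogue of these two ingredients --- the a priori $\eps$-closeness of $\mu$ to $\mu_0$ in an averaged sense, and the explicit removal of the kernel mode --- your contraction argument does not close.
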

The proof of Theorem \ref{T4} is contained in Section \ref{S.preuni}.

\section{Estimates for large $x$}\label{S.bounds}

\subsection{An upper bound}

\begin{lemma}
 \label{L.ubsmallxnew}
There exists a constant $C>0$ such that any solution of \eqref{eq2}, \eqref{eq1b} satisfies
\begin{equation}\label{ubsmallx1}
 \sup_{R>0}\frac{1}{R} \int_{R/2}^Rx f(x)\,dx\leq C\,.
\end{equation}
\end{lemma}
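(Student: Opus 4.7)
The mass bound $\int_0^\infty x f(x)\,dx = M$ from \eqref{eq2} gives the claim immediately for $R \ge 1$, since $\frac{1}{R}\int_{R/2}^R x f(x)\,dx \le M$. The substantive content of the lemma is therefore a uniform estimate as $R \to 0$, where $f$ may a priori be singular at the origin (recall that only $\lim_{x\to 0} x^2 f(x) = 0$ is assumed).

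For small $R$ the plan is to apply \eqref{eq1b}, rewrite $xf(x) = x^{-1}\cdot x^2 f(x)$, and interchange the order of integration via Fubini. For each fixed pair $(y,z)$ the $x$-integration ranges over $[\max(R/2, y), \min(R, y+z)]$, which is non-empty precisely on
\[
\mathcal R_R = \{(y,z) : 0 < y \le R,\ z \ge (R/2-y)_+\}.
\]
The length $L(y,z,R) := \min(R,y+z) - \max(R/2, y)$ is at most $R/2$ and the left endpoint is at least $R/2$, so $\log(1+t) \le t$ gives $\int_{\max(R/2,y)}^{\min(R,y+z)} x^{-1}\,dx \le 2L/R$. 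After Fubini this produces
\[
\int_{R/2}^R xf(x)\,dx \;\le\; \tfrac{2}{R} \iint_{\mathcal R_R} K(y,z)\, yf(y)\,f(z)\,L(y,z,R)\,dz\,dy,
\]
reducing the claim to the bound $\iint_{\mathcal R_R} K\, yff\,L\,dz\,dy \le C R^2$ uniformly in $R>0$.

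To prove this I would decompose $\mathcal R_R$ according to whether $y$ and $z$ each exceed $R/2$, using in each subregion the correspondingly sharper form of $L$, which equals one of $z$, $R-y$, $y+z-R/2$, or $R/2$. In the regions where at least one of $y,z$ is of order $R$ or larger, the kernel bound \eqref{Kassump1} combined with the estimates $z^{\pm\alpha} \le C R^{\pm\alpha - 1}\,z$ on $\{z \ge R/2\}$ (valid because $\alpha \in [0,1)$) reduces the corresponding integrals to multiples of $\int z f(z)\,dz \le M$, producing the desired $O(R^2)$ contribution.

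The main obstacle is the region $\{y \le R/2,\ z \le R/2\}$, where a priori $f$ may be singular at $0$ so that neither $\int f$ nor $\int z^{-\alpha} f$ need be finite. Here I would exploit the Fubini constraint $z \ge R/2 - y$ to write $z^{-\alpha} = z^{-\alpha-1}\cdot z \le (R/2-y)^{-\alpha-1}\cdot z$, thereby trading the divergent negative moment against $zf(z) \in L^1$; the residual singularity of $(R/2-y)^{-\alpha-1}$ near $y = R/2$ is absorbed against the simultaneous smallness $L \le y+z-R/2$ in this same subregion. Verifying that the entire case analysis closes uniformly in $R$, with no hidden logarithmic blow-ups from the various splittings near the boundary $y = R/2$, is the technical heart of the argument.
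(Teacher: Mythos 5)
Your case analysis cannot close with the information available at this point, which is only $\int_0^\infty xf\,dx=M$ together with the upper bound \eqref{Kassump1}. Two of your regions fail. In the region $y\in[R/2,R]$, $z\to0^+$ the Fubini constraint $z\ge R/2-y$ is vacuous, $L\le z$, and $K(y,z)\lesssim (y/z)^{\alpha}$, so the sharpest pointwise bound leaves you needing $\int_0^{R/2}z^{1-\alpha}f(z)\,dz<\infty$. That negative moment is not an a priori fact: it is exactly what Lemma \ref{L.negativemoment} deduces \emph{from} the present lemma via the dyadic argument, so invoking it here is circular (a priori $f$ could behave like $z^{-2}|\log z|^{-2}$, for which $zf\in L^1$ but $\int_0 z^{1-\alpha}f\,dz=\infty$). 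Moreover, even your ``easy'' region $y,z\ge R/2$ does not produce $O(R^2)$: there $K\le Cz/R$ and $L\le R/2$, so the integral is bounded by $CM\int_{R/2}^R yf(y)\,dy$, i.e.\ by $CM^2$; after multiplying by $2/R$ you only recover $\frac1R\int_{R/2}^Rxf\le CM^2/R$, which is weaker than the trivial bound $M$. Getting $O(R^2)$ there would require $\int_{R/2}^{\infty}f(z)\,dz\le C$ uniformly in $R$, i.e.\ $f\in L^1$ near the origin, which again is not available and is false for profiles behaving like $1/x$. In short, no single-pass estimate of the Fubini'd identity using only $xf\in L^1$ and \eqref{Kassump1} can manufacture the factor $R^2$.

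The argument behind the cited Lemma 2.1 of \cite{NV11b} runs in the opposite direction and uses the \emph{lower} bound \eqref{Kassump2}, not \eqref{Kassump1}, together with a bootstrap. Restricting the right-hand side of \eqref{eq1b} to a thin near-diagonal block $y,z\in(aR,(1+\delta)aR)$ with $\delta\le\min(1,2\kappa)$ gives, for every $x$ in the fixed nonempty interval $[(1+\delta)aR,2aR]$,
\[
x^2f(x)\ \ge\ k_0\int_{aR}^{(1+\delta)aR}yf(y)\,dy\int_{aR}^{(1+\delta)aR}f(z)\,dz\ \ge\ \frac{c}{R}\Big(\int_{aR}^{(1+\delta)aR}yf(y)\,dy\Big)^2,
\]
and integrating $x^{-1}\cdot x^2f(x)$ over that interval and bounding the result by $\int_0^{2aR}xf\,dx$ yields
\[
\Big(\int_{aR}^{(1+\delta)aR}yf(y)\,dy\Big)^2\ \le\ CR\int_0^{2aR}xf(x)\,dx.
\]
Starting from $\int_0^\rho xf\le M$ and covering $(R/2,R)$ by finitely many such blocks, this gives $\int_{R/2}^Rxf\le C\sqrt{R}$, hence $\int_0^\rho xf\le C\sqrt{\rho}$ by dyadic summation, and iterating improves the exponent along $\beta_{n+1}=(1+\beta_n)/2\to1$ with uniformly bounded constants. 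This quadratic self-improvement is the idea your proposal is missing.
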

\begin{proof}
The proof follows exactly as in Lemma 2.1 of  \cite{NV11b}. 
\end{proof}

\begin{lemma}\label{L.negativemoment}
 For any $\gamma\in[0,1)$ there exists a constant $C>0$ such that any solution of \eqref{eq2}, \eqref{eq1b} satisfies
\begin{equation}\label{ubsmallx}
 \int_0^{\infty} x^{1{-}\gamma} f(x)\,dx \leq C\,.
\end{equation}
\end{lemma}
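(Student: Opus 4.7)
The plan is to split the integral at $x=1$ and handle the tail using mass finiteness, reducing the problem to the behaviour of $f$ near the origin where $f$ may blow up.

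First I would note that for $x \geq 1$ one has $x^{1-\gamma} \leq x$ (since $\gamma \geq 0$), so
\[
\int_1^\infty x^{1-\gamma} f(x)\,dx \;\leq\; \int_1^\infty x f(x)\,dx \;\leq\; M
\]
by \eqref{eq2}. Hence the entire task reduces to bounding $\int_0^1 x^{1-\gamma} f(x)\,dx$.

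For this I would perform a dyadic decomposition $\int_0^1 = \sum_{k=0}^\infty \int_{2^{-k-1}}^{2^{-k}}$. On each dyadic ring $[2^{-k-1},2^{-k}]$ one has $x^{-\gamma} \leq 2^{(k+1)\gamma}$, so
\[
\int_{2^{-k-1}}^{2^{-k}} x^{1-\gamma} f(x)\,dx \;=\; \int_{2^{-k-1}}^{2^{-k}} x^{-\gamma}\, x f(x)\,dx \;\leq\; 2^{(k+1)\gamma} \int_{2^{-k-1}}^{2^{-k}} x f(x)\,dx.
\]
Now I would invoke Lemma \ref{L.ubsmallxnew} with $R = 2^{-k}$, which gives $\int_{2^{-k-1}}^{2^{-k}} x f(x)\,dx \leq C\, 2^{-k}$. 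Substituting yields
\[
\int_{2^{-k-1}}^{2^{-k}} x^{1-\gamma} f(x)\,dx \;\leq\; C\, 2^\gamma \, 2^{-k(1-\gamma)}.
\]

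Summing over $k \geq 0$ produces a geometric series with ratio $2^{-(1-\gamma)} < 1$ precisely because $\gamma < 1$, and the series converges to a finite constant depending only on $\gamma$ and on the constant from Lemma \ref{L.ubsmallxnew}. Combining with the tail bound completes the proof. There is no real obstacle here: the exponent $1-\gamma > 0$ is exactly what is needed for the dyadic geometric sum to converge, and Lemma \ref{L.ubsmallxnew} supplies the local $L^1$-type control on $xf$ that makes each dyadic contribution scale like $R^{1-\gamma}$. The only thing to check is that the constraint $\gamma < 1$ cannot be relaxed within this scheme, which is consistent with the statement.
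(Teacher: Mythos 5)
Your proof is correct and follows essentially the same route as the paper: the tail on $(1,\infty)$ is handled by the mass constraint \eqref{eq2}, and the contribution from $(0,1)$ by the dyadic decomposition combined with Lemma \ref{L.ubsmallxnew}, with convergence of the geometric series exactly because $\gamma<1$. No gaps.
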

\begin{proof}
 The estimate in $(0,1)$ follows from \eqref{ubsmallx1}  via a dyadic decomposition, an argument we will also use repeatedly in this paper.
Indeed,
\begin{align*}
\int_0^1 x^{1{-}\gamma} f(x)\,dx & =\sum_{n=0}^{\infty} \int_{2^{-(n{+}1)}}^{2^{-n}} x^{1{-}\gamma} f(x)\,dx\\
&  \leq 2 \sum_{n=0}^{\infty} 2^{n\gamma} \int_{2^{-(n{+}1)}}^{2^{-n}} x f(x)\,dx  \leq C \sum_{n=0}^{\infty} 2^{n(\gamma{-}1)} \leq C\,,
\end{align*}
while the estimate in $(1,\infty)$ is a consequence of  \eqref{eq2}.
\end{proof}

\begin{lemma}
 \label{L.ublargex}
There exist constants $C,a>0$ such that any solution of \eqref{eq2}, \eqref{eq1b} satisfies
\[
 f(x) \leq C e^{-ax} \qquad \mbox{ for all } x\geq 1\,.
\]
\end{lemma}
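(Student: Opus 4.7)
The proof naturally falls into two steps: first show that an exponential moment of $f$ is finite, and then bootstrap to the pointwise bound via the integral equation \eqref{eq1b}.

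For the exponential-moment step, the natural tool is the weak form of the self-similar equation obtained by multiplying \eqref{eq1} by a test function $\phi(x)$ and integrating by parts:
\[
\int_0^\infty (x\phi'(x) - \phi(x)) f(x)\,dx = \frac{1}{2}\iint_{(0,\infty)^2} K(x,y) f(x) f(y) \bigl[\phi(x+y) - \phi(x) - \phi(y)\bigr]\,dx\,dy.
\]
The choice $\phi(x) = e^{ax}-1$ enjoys the multiplicative identity $\phi(x+y) - \phi(x) - \phi(y) = (e^{ax}-1)(e^{ay}-1)$, so writing $\Psi(a):=\int_0^\infty (e^{ax}-1) f(x)\,dx$ produces
\[
a\Psi'(a) - \Psi(a) = \frac{1}{2}\iint K(x,y) f(x) f(y)(e^{ax}-1)(e^{ay}-1)\,dx\,dy.
\]
Using \eqref{Kassump1} together with the symmetry $x \leftrightarrow y$, the right-hand side is dominated by $K_0 \int x^\alpha(e^{ax}-1) f\,dx \cdot \int y^{-\alpha}(e^{ay}-1) f\,dy$. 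The first factor is controlled by $\Psi(a)+\Psi'(a)$ via the elementary bound $x^\alpha \leq 1+x$, while the second factor is controlled by Lemma \ref{L.negativemoment} on $(0,1)$ (using $e^{ay}-1 \leq aye^a$ there) and by $\Psi(a)$ on $(1,\infty)$. A Gronwall-type argument, made rigorous by working with the truncated test function $\phi_N := \min(e^{ax}-1, e^{aN}-1)$ and letting $N\to\infty$, then forces $\Psi(a_0) < \infty$ for some $a_0 > 0$.

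For the pointwise bound, I would multiply \eqref{eq1b} by $e^{ax}$ and exploit that $y+z>x$ on the domain of integration, which gives $e^{ax} \leq e^{a(y+z)}$ there; hence
\[
x^2 f(x) e^{ax} \leq \int_0^x\int_{x-y}^\infty K(y,z)\, y f(y) e^{ay}\, f(z) e^{az}\,dz\,dy.
\]
For $x\geq 2$ I would split the $y$-integration at $y=x/2$. In the region $y \leq x/2$ the constraint $z > x/2 \geq 1$ allows one to bound $z^{-\alpha}$ by a constant and $z^\alpha$ by $z$ in the kernel estimate from \eqref{Kassump1}, reducing the integral to products of polynomial-exponential moments $\int y^{1\pm\alpha} e^{ay} f\,dy$ and $\int_{x/2}^\infty z^{\mp\alpha} e^{az} f\,dz$, all of which are finite for $a<a_0$ by absorbing the polynomial prefactor into $e^{(a_0-a)y}$. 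The region $y>x/2$ is more delicate: one uses $\int_{x-y}^1 z^{-\alpha} f(z)\,dz \leq C(x-y)^{-\alpha}$ (obtained by dyadic summation via Lemma \ref{L.ubsmallxnew}) together with a self-improving inequality of the form $f(x) \leq \frac{C}{x^2} e^{-ax} + \frac{C'}{x^{1-\alpha}}\sup_{y\in(x-1,x)} f(y)$, which can be closed by iteration because the prefactor $C'/x^{1-\alpha}$ tends to $0$ as $x\to\infty$. Combining these estimates yields $f(x) \leq Ce^{-ax}$ for $x \geq 1$.

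The principal obstacle is the Gronwall step of the first part, since $\Psi(a)$ is not a priori known to be finite; the truncation scheme and the passage to the limit must be carried out carefully, with particular attention to the small-$y$ singularity of $y^{-\alpha}(e^{ay}-1)$, which is exactly where the negative-moment estimate of Lemma \ref{L.negativemoment} is critical.
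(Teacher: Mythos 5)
Your two-step architecture (exponential moment first, then pointwise bound from \eqref{eq1b}) matches the paper's, and your second step is essentially the paper's: the split at $y=x/2$, the dyadic bound $\int_{x-y}^1 z^{-\alpha}f\,dz\leq C(x-y)^{-\alpha}$, and the self-improving inequality with a vanishing prefactor all appear there (the paper closes it by iterating the convolution inequality $L$ times with $(L+1)\alpha<L$ rather than by a sup-bound, but these are equivalent in substance). The genuine problem is in your first step. With the bound $x^{\alpha}\leq 1+x$ your differential inequality reads
\begin{equation*}
a\Psi'(a)-\Psi(a)\;\leq\;K_0\bigl(\Psi(a)+\Psi'(a)\bigr)\bigl(Cae^{a}+\Psi(a)\bigr),
\end{equation*}
and the right-hand side contains the term $K_0Cae^{a}\,\Psi'(a)$. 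To extract any information about $\Psi'$ you must absorb this into $a\Psi'$ on the left, which requires $K_0Ce^{a}<1$. But $K_0$ is the fixed constant of \eqref{Kassump1} (for Smoluchowski's kernel $K(x,x)=4$, so $K_0\geq 2$) and $C$ is the constant of Lemma \ref{L.negativemoment}, which scales with the mass; neither is small. For generic data the coefficient of $\Psi'$ on the left, $a-K_0Cae^a-K_0\Psi$, is negative, the inequality becomes $(\text{negative})\cdot\Psi'\leq(\text{positive})$, and the Gronwall step collapses: no truncation scheme can rescue a vacuous inequality. The bound $x^{\alpha}\leq 1+x$ discards exactly the gain $x^{\alpha}\ll x$ that makes the off-diagonal region subcritical; in the paper's moment formulation this is the difference between $M(\gamma+\alpha-1)$ in \eqref{ublargex5}, which is smaller than $M(\gamma)$ by a factor $\sim\gamma^{\alpha-1}$, and $M(\gamma)$ itself, which would destroy the induction there as well.

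The repair is interpolation rather than the pointwise bound: by H\"older,
\begin{equation*}
\int_1^{\infty}x^{\alpha}\bigl(e^{ax}-1\bigr)f(x)\,dx\;\leq\;\Bigl(\int_1^{\infty}x\bigl(e^{ax}-1\bigr)f\,dx\Bigr)^{\alpha}\Bigl(\int_1^{\infty}\bigl(e^{ax}-1\bigr)f\,dx\Bigr)^{1-\alpha}\;\leq\;\bigl(\Psi'\bigr)^{\alpha}\Psi^{1-\alpha},
\end{equation*}
after which Young's inequality places an arbitrarily small coefficient ($\tfrac{a}{2}$, say) in front of $\Psi'$ at the price of a superlinear but harmless term in $\Psi$; finite-time blow-up of the resulting ODE is acceptable since you only need one $a_0>0$ with $\Psi(a_0)<\infty$. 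This is precisely the device the paper itself uses in the analogous Laplace-transform estimate \eqref{munontrivial2} of Lemma \ref{L.munontrivial}, where $|\partial_qU|^{\alpha}|U|^{1-\alpha}|V|\leq\eps|\partial_qU|+C_{\eps}|U||V|^{1/(1-\alpha)}$. A secondary caveat: your truncation $\phi_N=\min(e^{ax}-1,e^{aN}-1)$ does preserve the key inequality $\phi_N(x+y)-\phi_N(x)-\phi_N(y)\leq\phi_N(x)\phi_N(y)$, but on the left-hand side $\int f\,(x\phi_N'-\phi_N)\,dx=a\Psi_N'-\Psi_N-aNe^{aN}\int_N^{\infty}f$, and the boundary term $aNe^{aN}\int_N^{\infty}f$ is unbounded in $N$ unless you already know decay of $f$; so the passage $N\to\infty$ needs a different organization (e.g.\ the paper's uniform moment bounds $M(\gamma)\leq\gamma^{\gamma}e^{A\gamma}$, resummed).
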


\begin{proof}
\ignore{
Dividing \eqref{eq1b} by $x$, integrating and changing the order of integration on the right hand side we derive
as a first a priori estimate that
\begin{align}
 \int_0^{\infty} x f(x)\,dx & = \int_0^{\infty} \int_0^{\infty} K(y,z) f(y) f(z) y \log \Big( \frac{y+z}{y}\Big)\,dy\,dz\nonumber\\
& \geq C \int_0^{\infty}  \int_0^z  f(y) f(z) K(y,z) y \log \Big( \frac{y+z}{y}\Big)\,dy\,dz\label{ublargex1}\\
& \geq C \int_0^{\infty} \int_0^z  f(y) f(z) K(y,z)y\,dy\,dz\,.\nonumber
\end{align}
}
We denote for $\gamma \geq 1$
\begin{equation}\label{ublargex2}
 M(\gamma):=\int_0^{\infty} x^{\gamma} f(x)\,dx \,.
\end{equation}
Our goal is to show inductively that 
\begin{equation}\label{ublargex3}
 M(\gamma) \leq \gamma^{\gamma} e^{A\gamma}
\end{equation}
for some (large) constant $A$.

To that aim we first  multiply \eqref{eq1b} by  $x^{\gamma-2}$ with some $\gamma>1$ and after integrating we obtain 
\[
(\gamma{-}1) M(\gamma) = \tfrac 1 2 \int_0^{\infty}\int_0^{\infty} K(x,y) f(x) f(y)\big( (x+y)^{\gamma} - x^{\gamma} - y^{\gamma} \big)\,dx\,dy\,.
\]
By symmetry we also find
\begin{align*}
 M(\gamma)&= \frac{1}{\gamma{-}1} \int_0^{\infty}  \int_0^x  K(x,y) f(x) f(y) \Big( \big(x+y\big)^{\gamma} - x^{\gamma}\Big) \,dy\,dx\\
& = \int_0^1\,dx \int_0^x \,dy \dots + \int_1^{\infty} \,dx \int_0^{x/\gamma} \,dy \dots + \int_1^{\infty} \,dx \int_{x/\gamma}^x \,dy \dots\,.
\end{align*}
Due to \eqref{ubsmallx} we have
\begin{align}
 \int_0^1\int_0^x  K(x,y) f(x) f(y)& \Big( \big(x+y\big)^{\gamma} - x^{\gamma}\Big)\,dy\,dx
\leq C \int_0^1  \int_0^x  K(x,y) f(x) f(y)  x^{\gamma}\,dy\,dx\label{ublargex4} \\
& \leq C\int_0^1 x^{\alpha +\gamma}f(x)\int_0^x y^{1{-}\alpha} f(y)\,dy\,dx \leq C\,.\nonumber
\end{align}
Using \eqref{ubsmallx} and  $(x+y)^{\gamma} - x^{\gamma} \leq c x^{\gamma-1}y$ for $ y \leq \frac{x}{\gamma}$, we find that
\begin{equation}\label{ublargex5}
\begin{split}
 \int_1^{\infty}  \int_0^{x/\gamma} K(x,y) f(x) f(y)& \Big( \big(x+y\big)^{\gamma} - x^{\gamma}\Big)\,dy\,dx\\
& \leq  \int_1^{\infty}  \int_0^{x/\gamma} K(x,y)  y x^{\gamma{-}1} f(x)f(y)\,dy\,dx\\
&\leq \int_1^{\infty} \int_0^{x/\gamma} x^{\gamma+\alpha-1} y^{1-\alpha} f(x)f(y)\,dy\,dx 
\\ &\leq
 C M(\gamma+\alpha -1)\,,
\end{split}
\end{equation}
so that for the sum of both terms we can prove by induction that it is smaller than $1/2 \gamma^{\gamma} e^{A\gamma}$. 
It remains to estimate
\begin{align*}
\frac{C}{\gamma-1}&\int_1^{\infty} \,dx \int_{x/\gamma}^x\,dy   K(x,y) f(x) f(y) \big(x+y\big)^{\gamma}\\
& \leq \frac{C}{\gamma} \int_1^{\infty} \,dx \int_{x/\gamma}^x \,dy f(x) f(y) \big(x+y\big)^{\gamma} \Big( \frac{x}{y}\Big)^{\alpha}=:(*)\,.
\end{align*}
In the following $\{\zeta_n\} \subset (0,1]$, $\zeta_0=1$,  will be a decreasing sequence of numbers that will be specified later.
Then we define a corresponding sequence of numbers $\kappa_n$ such that given a sequence $\{\theta_n\} \subset (0,1)$, also to be
specified later, we have
\begin{equation}\label{kappandef}
 \big(x+y\big)^{\gamma}  \leq \kappa_n^{\gamma} x^{\gamma(1-\theta_n)} y^{\gamma \theta_n} \qquad \mbox{ for } \frac{y}{x} \in [\zeta_{n+1},\zeta_n]\,.
\end{equation}
Equivalently we have
\begin{equation}\label{kappadef1}
 \kappa_n = \max_{\zeta \in [\zeta_{n{+}1},\zeta_n]} \Big( \frac{1+\zeta}{\zeta^{\theta_n}}\Big)\,.
\end{equation}
With these definitions we have
\[
 (*) \leq \frac{C}{\gamma} \sum_{n=0}^{n_0(\gamma)} \kappa_n^{\gamma} \zeta_{n{+}1}^{-\alpha} M(\gamma(1-\theta_n)) M(\gamma \theta_n)\,,
\]
where $n_0(\gamma)$ is such that $\zeta_{n_0(\gamma)} = \frac{1}{\gamma}$. 

We now choose $\theta_n$ such that for $\psi_{\theta_n}(\zeta):= \log(1+\zeta)-\theta_n \log \zeta$ we have
\[
\min_{\zeta \in [\zeta_{n+1},\zeta_n]} \psi_{\theta_n}(\zeta) = \log(1+\zeta_n) - \theta_n \log(\zeta_n)\,.
\]
This is equivalent to 
\begin{equation}\label{thetandef}
\theta_n = \frac{\zeta_n}{1+\zeta_n}\,.
\end{equation}
We want to prove  by induction over $\gamma$ that $(*) \leq \frac{1}{2} \gamma^{\gamma} e^{A\gamma}$. Inserting the corresponding hypothesis, this reduces to showing that
\[
 \frac{C}{\gamma} \sum_{n=0}^{n_0} \zeta_{n{+}1}^{-\alpha} \exp \Big( \gamma \big( \max_{\zeta \in [\zeta_{n+1},\zeta_n]} \psi_{\theta_n}(\zeta)
+ \theta_n \log \theta_n + (1-\theta_n) \log (1-\theta_n)\big) \Big)\leq \frac 1 2\,.
\]
By definition \eqref{thetandef} we have
\begin{align*}
 &\max_{\zeta \in [\zeta_{n+1},\zeta_n]} \psi_{\theta_n}(\zeta)
+ \theta_n \log \theta_n + (1-\theta_n) \log (1-\theta_n) \\
& = \min_{\zeta \in [\zeta_{n+1},\zeta_n]}\psi_{\theta_n}(\zeta)  + (\max-\min)_{\zeta \in [\zeta_{n+1},\zeta_n]}  \psi_{\theta_n}(\zeta)
+ \theta_n \log \theta_n + (1-\theta_n) \log (1-\theta_n) \\
& = (\max-\min)_{\zeta \in [\zeta_{n+1},\zeta_n]}  \psi_{\theta_n}(\zeta)\,.
\end{align*}
Thus we need to investigate
\begin{align*}
 (\max-\min)_{\zeta \in [\zeta_{n+1},\zeta_n]}  \psi_{\theta_n}(\zeta) &= \psi_{\theta_n}(\zeta_{n{+}1}) - \psi_{\theta_n}(\zeta_n)\\
& = \log \Big( \frac{1+\zeta_{n{+}1}}{1+\zeta_n} \Big) - \theta_n \log \Big( \frac{\zeta_{n{+}1}}{\zeta_n} \Big)\\
& = \log \Big( 1 + \frac{\zeta_{n{+}1}-\zeta_n}{1+\zeta_n}\Big) - \frac{\zeta_n}{1+\zeta_n} \log \Big( 1 + \frac{\zeta_{n{+}1}-\zeta_n}{\zeta_n}\Big)\,.
\end{align*}
Expanding the nonlinear terms we find
\begin{equation*}
W:=(\max-\min)_{\zeta \in [\zeta_{n+1},\zeta_n]}  \psi_{\theta_n}(\zeta) \leq C \Big( |\zeta_{n{+}1}-\zeta_n|^2 + \frac{(\zeta_{n{+}1}-\zeta_n\big)^2}{
\zeta_n}\Big)\,.
\end{equation*}
We now split $\{1,2,\cdots,n_0\}$ into a finite number of sets $\{1,2,\cdots,N_1\}$, $\{N_1+1, \cdots, N_2\}$, $\cdots$, $\{N_{k-1}+1, \cdots,N_k=n_0\}$ in the following way.

We first define 
\[
 \zeta_0 =1 \,, \quad \eta_0= 1+ \frac{1}{\sqrt{\gamma}}\,, \quad \zeta_n = \eta_0^{-n} \zeta_0\,, \quad \mbox{ for all } n \leq N_1
\]
where $N_1$ is such that $ \zeta_{n} \geq \frac{1}{\sqrt{\gamma}}$, that is we can choose $N_1 \sim \sqrt{\gamma} \log \gamma$.
With these definitions we find 
\[
 \Big| \frac{\zeta_{n{+}1} - \zeta_n}{\zeta_n}\Big| \leq \frac{C}{\sqrt{\gamma}} \qquad \mbox{ for all } 1\leq n\leq N_1
\]
and thus
\[
 \frac{1}{\gamma} \sum_{n=0}^{N_1} \zeta_{n{+}1}^{-\alpha} \exp \big( \gamma W\big) \leq \frac{C N_1}{\gamma} \gamma^{\alpha/2}  
\sim \gamma^{(\alpha-1)/2} \log \gamma \to 0 \quad \mbox{ as } \gamma \to \infty\,.
\]
Next, for $n \in (N_1,N_2]$ we define
\[
 \zeta_n=\eta_1^{-(n-N_1)} \zeta_{N_1}\,, \qquad \eta_1=1+\frac{1}{\gamma^{1/4}}\,.
\]
Then $\zeta_n \leq \frac{1}{\sqrt{\gamma}}$, such that $|\zeta_{n{+}1}-\zeta_n|^2 \leq \frac{C}{\gamma}$ and 
\[
\gamma W \leq C \gamma \Big( \zeta_n |\eta_1-1|^2 + \frac{1}{\gamma}\Big) \leq C\,.
\]
We need to determine $N_2$ such that
\[
(N_2-N_1) \frac{1}{\zeta_{N_2}^{\alpha} \gamma} \to 0 \qquad \mbox{ as } \gamma \to \infty\,.
\]
Making the ansatz $\zeta_{N_2} = \gamma^{-\sigma}$, that is $N_2 \sim \gamma^{1/4} \log \gamma$, this implies that we need
\[
 \frac{\gamma^{1/4} \gamma^{\alpha \sigma} \log \gamma}{\gamma} \ll 1\,
\]
as $\gamma \to \infty$ 
and this needs $\alpha \sigma < \frac 3 4$.
If we can choose $\sigma =1$ we are done. Otherwise we need to iterate the argument. Thus, assume that $3\leq 4\alpha$ and set
$\sigma_1=\frac{3}{8\alpha}$. We define for 
 $k \geq 1$ the sequence $\sigma_{k+1} = \frac{1}{4\alpha} (1+\sigma_k)$.
Then, we define $\eta_{k+1}= 1+\frac{1}{\gamma^{(1-\sigma_{k+1})/2}}$ and $\zeta_n = \eta_{k+1}^{-(n-N_{k+1})} \zeta_{N_{k+1}}$ for $n \in (N_{k+1},N_{k+2}]$ with 
$\zeta_{N_{k+1}}=\gamma^{-\sigma_k}$. Then $N_{k+1}-N_k = \gamma^{(1-\sigma_k)/2} \log \gamma $ and we find that our sum is controlled by
$C \gamma^{(1-\sigma_k)/2 -1 + \alpha \sigma_{k+1}} \ll 1$ by our definition of $\sigma_{k+1}$. Since $\alpha<1$ we find after a finite number of steps that
$\sigma_k \geq 1$, and then we can stop the iteration.


It remains to show that \eqref{ublargex3} implies the pointwise estimate for $f$. Indeed, \eqref{ublargex3} implies for $R>0$ that
\[
 R^{\gamma} \int_{R}^{2R} f(x)\,dx \leq \int_{R}^{2R} x^{\gamma} f(x)\,dx \leq \gamma^{\gamma} e^{A\gamma}
\]
and thus
\[
 \int_R^{2R} f(x)\,dx \leq \exp\Big( \gamma (\log(\gamma)+ \log(R)) + A \gamma\Big)\,.
\]
The minimum of $\psi(\gamma):= \gamma (\log(\gamma) +\log(R)) + A \gamma$ is obtained for $\gamma = e^{-(A+1)} R$ 
and thus
\[
 \int_R^{2R} f(x)\,dx \leq \exp\Big( - e^{-(A+1)} R\Big)
\]
and obviously it follows  that there exists (another) $A>0$ such that
\begin{equation}\label{ublargex20}
 \int_R^{2R} f(x)\,dx \leq \exp\Big( - A R\Big)\,.
\end{equation}
Notice that \eqref{ublargex20} also implies that
$ \int_R^{\infty} f(x)\,dx \leq \exp\Big( - A R\Big)$
for some $A>0$.

To deduce now the pointwise bound we first get from \eqref{eq1b} that
\[
\begin{split}
 x^2 f(x) &=\int_0^x\,dy \int_{x-y}^{\infty} \,dz\cdots \\
 &\leq \int_0^1\,dy\int_{x/2}^{\infty} \,dz
 \cdots + \int_0^1\,dz\int_{x-z}^x\,dy \cdots + \int_1^x\,dy \int_{\max(1,x-y)}^{\infty}\,dz\cdots\\
 &=:(I)+(II)+(III)\,.
 \end{split}
\]
Next, we argue via a dyadic argument (as in the proof of Lemma \ref{L.negativemoment}), using \eqref{ublargex20},
that $\int_{x/2}^{\infty} z^{\alpha}f(z)\,dz\leq Ce^{-\frac{A}{2}x}$.
With this estimate and \eqref{ubsmallx} we obtain that
\[
 |(I)| \leq C \int_0^1\int_{x/2} \Big(\frac{z}{y}\Big)^{\alpha} y f(y)f(z)\,dz\,dy \leq C e^{-\frac{A}{2}x}\,.
\]
Furthermore, given $x-y$ we choose $M$ such that $2^{-M}\leq x-y \leq 2^{-M+1}$ and obtain with \eqref{ubsmallx1} that
\[
 \int_{x-y}^1 z^{-\alpha} f(z)\,dz = \sum_{n=0}^M \int_{2^{-(n{+}1)}}^{2^{-n}} z^{-\alpha} f(z)\,dz \leq C2^{M\alpha}
 \leq C(x-y)^{-\alpha}\,.
\]
Hence
\[
\begin{split}
 |(II)| &\leq C \int_0^1 \int_{x-z}^x\Big(\frac{y}{z}\Big)^{\alpha}y f(y) f(z)\,dy\,dz
\\ & = C\int_{x-1}^x y f(y) \int_{x-y}^1z^{-\alpha}f(z)\,dz\,dy \leq  x \int_{x-1}^x (x-y)^{-\alpha} f(y)\,dy\,.
\end{split}
\]
Finally, we can estimate term $(III)$, using as above dyadic arguments and \eqref{ublargex20} as well as
\eqref{ubsmallx}, to obtain
\begin{align*}
|(III)| &= \int_1^x\int_{\max(1,x-y)}^\infty K(y,z)y f(y) f(z)\,dz\,dy\\
&=\int_{y \leq z}\cdots + \int_{y>z} \cdots\\
&\leq C\int_1^x\int_{\max(1,x-y)}^{\infty}\Big( \frac{z}{y}\Big)^{\alpha}yf(y) f(z) \,dz\,dy
+C\int_1^x\int_{\max(1,x-y)}^{\infty}\Big( \frac{y}{z}\Big)^{\alpha}yf(y) f(z) \,dz\,dy\\
& \leq C \int_{x/2}^{\infty} z^{\alpha} f(z)\,dz 
+ \int_{x/2}^{\infty} 
y^{1+\alpha}f(y)\,dy\\
& \leq C e^{-\frac{A}{2} x}\,.
\end{align*}
In summary we obtain
\begin{equation}\label{ublargex21}
 f(x)\leq Ce^{-Ax}+\frac{C}{x} \int_{x-1}^x\frac{f(y)}{(x-y)^{\alpha}} \,dy
\end{equation}
with a (possibly new) constant $A>0$. Iterating \eqref{ublargex21} gives, using Fubini,
\[
 f(x) \leq Ce^{-Ax} +C^2 \int_{x-1}^x\frac{e^{-Ay}}{(x-y)^{\alpha}}\,dy + C^2 \int_{x-2}^x f(y_2) \int_{y_2}^{y_2+1}
 \frac{1}{(y_1-y_2)^{\alpha}(x-y_1)^{\alpha}}\,dy_1\,dy_2\,.
\]
Now
\[
 \int_{x-1}^x \frac{e^{-Ay}}{(x-y)^{\alpha}}\,dy\leq B e^{-Ax}
\]
and
\[
  \int_{y_2}^{y_2+1}
 \frac{1}{(y_1-y_2)^{\alpha}(x-y_1)^{\alpha}}\,dy_1 \leq \frac{C}{(x-y_2)^{2\alpha-1}} \,.
\]
Hence, if $2\alpha<1$ we can finish the proof using Gronwall's inequality. If $2\alpha>1$ we iterate \eqref{ublargex21}
$L$ times to find
\begin{align*}
 f(x)&\leq C^{L+1}B^{L+1} e^{-Ax}\\
 &+ C^{L+2} \int_{x-(L+1)}^x f(y_{L+1}) 
 \int_{y_{L+1}}^{y_{L+1}+1} \frac{dy_L}{(y_l-y_{L+1})^{\alpha}} \cdots \frac{C}{(x-y_2)^{2\alpha-1}}dy_{L+1}\\
 & \leq C^{L+1}B^{L+1} e^{-Ax} + C^{L+2} \int_{x-(L+1)}^x f(y_{L+1}) \frac{C}{(x-y_2)^{(L+1)\alpha-L}}\,dy_{L+1}\,.
\end{align*}
If $L$ satisfies $(L+1)\alpha < L$ then we can also use Gronwall's inequality to conclude the proof. 
\end{proof}

\subsection{A lower bound}

\begin{lemma}\label{L.lowerbound}
 There exists $c,a>0$ such that
\[
 f(x)\geq c e^{-ax} \mbox{ for all } x\geq 1\,.
\]
\end{lemma}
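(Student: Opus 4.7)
The plan is to extract from \eqref{eq1b} a self-improving inequality that propagates a lower bound on $f$ across dyadic scales. The starting point is to restrict the double integral in \eqref{eq1b} to the square $(y,z) \in [x/2,\,x/2+\kappa]^2$. For such pairs the constraint $z > x-y$ is automatic (since $y,z \ge x/2$), and $|y-z| \le \kappa \le \kappa(y+z)$ whenever $y+z \ge 1$, which holds for $x \ge 1$; hence $K(y,z) \ge k_0$ by \eqref{Kassump2}. Using $y \ge x/2$ this gives
\[
f(x) \;\ge\; \frac{k_0}{2x}\, g(x/2)^2, \qquad g(R) := \int_R^{R+\kappa} f(s)\,ds,
\]
for all $x \ge 1$.

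Before iterating I would verify strict positivity of $g$. Since $f \in C^0$ is nontrivial, $f>0$ on some open interval, and the same localization applied to \eqref{eq1b} (taking both integration variables inside such an interval) spreads positivity to the interval obtained by roughly doubling it; a finite iteration then yields $f>0$ on $(R_0,\infty)$ for some $R_0 > 0$, so that $g(R)>0$ for all $R \ge R_0$.

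The core step is a dyadic iteration. Integrating the pointwise inequality in $x$ over $[R, R+\kappa]$, and using that $x/2$ sweeps out $[R/2,\,R/2+\kappa/2]$, one obtains an inequality of the form $g(R) \ge (c/R)\,\widetilde g(R/2)^2$, where $\widetilde g$ is a local mass quantity comparable to $g$. To avoid losing information to fluctuations, I would pass to the dyadic surrogate $\Psi(R) := \sup_{s \in [R, R+\kappa]}(-\log g(s))$, which satisfies a clean recursion
\[
\Psi(R) \;\le\; 2\Psi(R/2) + \log R + C.
\]
Setting $R_n = 2^n R_0$ and $u_n = \Psi(R_n)$, dividing by $2^n$ and summing the convergent series $\sum_k k\,2^{-k}$ gives $u_n \le 2^n(\Psi(R_0)+C')$, i.e.\ $\Psi(R) \le C_1 R$ for $R \ge R_0$, equivalently $g(R) \ge e^{-C_1 R}$.

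Plugging this back into the pointwise bound from Step~1 yields $f(x) \ge (c/x)\,e^{-C_1 x}$ for $x \ge 1$, and the polynomial factor is absorbed via $e^x/x \ge 1$ to produce $f(x) \ge c'e^{-ax}$ with $a = C_1+1$, as required. The main technical obstacle I anticipate is the transition from the pointwise inequality $f(x) \ge (c/x)\,g(x/2)^2$ to a genuinely iterable recursion: because $g$ is not known to be monotone in $R$, one has to commit to an averaged or supremum version before taking logarithms so that the recursion really takes the form $\Psi(R) \le 2\Psi(R/2) + O(\log R)$, and this bookkeeping must respect the shift by $\kappa$ in the definition of $g$. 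The positivity propagation and the absorption of the $1/x$ factor are, by comparison, routine.
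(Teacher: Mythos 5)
Your proposal is correct and shares the paper's overall skeleton: restrict \eqref{eq1b} to the near-diagonal region where \eqref{Kassump2} gives $K\geq k_0$, obtain a quadratic inequality relating mass near $x$ to mass near $x/2$, establish positivity as a base case (your positivity step is essentially Lemma \ref{L.auxiliary}), iterate dyadically, and plug the integral bound back into \eqref{eq1b} for the pointwise estimate. The key iteration is, however, handled by a genuinely different mechanism. The paper beats the $1/R$ prefactor in \eqref{lowerbound1} by exploiting the full width of the cone $|y-z|\leq\kappa(y+z)$: it sums the $\sim\kappa R/2$ cross terms $I\big(\tfrac{R+1}{2}+j\big)I\big(\tfrac{R+1}{2}-j\big)$, whose number exactly compensates the factor $1/R$, so the ansatz $I(R)\geq Ke^{-aR}$ propagates with \emph{no loss} in the constant $K$ or the rate $a$. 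You instead use only a single $\kappa\times\kappa$ square at the diagonal, accept an additive loss of $\log R+C$ in $-\log g$ at each doubling, and absorb the accumulated loss into the exponent via the convergent series $\sum_k k\,2^{-k}$, so the total damage is $O(R)$ and merely worsens the rate. Your route is more elementary and needs less of the kernel (an arbitrarily thin diagonal strip suffices, and no counting of shifted pairs), at the price of a cruder constant $a$; both are adequate for the lemma.

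Two points of bookkeeping deserve attention. First, your conclusion $\Psi(R)\leq C_1R$ for \emph{all} $R\geq R_0$ does not follow from iterating along the single sequence $R_n=2^nR_0$; you must run the recursion from an arbitrary $R$ down to a point of the compact window $[R_0,2R_0)$ and use that $\sup_{[R_0,2R_0)}\Psi<\infty$, which holds because $g$ is continuous and strictly positive there by your positivity step. Since the recursion $\Psi(R)\leq2\Psi(R/2)+\log R+C$ is valid for every $R\geq 2R_0$, this is routine, but it should be said. Second, like the paper, you pass from the bound for $x\geq R_0$ to $x\geq1$ only by ``adjusting constants''; this implicitly uses positivity of $f$ on $[1,R_0]$, which neither argument spells out. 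Neither issue is a gap in the substance of the proof.
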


\begin{proof}
We start with a lower bound on the integral $I(R):=\int_R^{R+1} f(x)\,dx$. From equation \eqref{eq1b} we deduce for large $R$ that
\begin{equation}\label{lowerbound1}
 \begin{split}
  I(R) & \geq \frac{k_0}{R-1}\int_{R}^{R+1} \int_{\frac{(1{-}\kappa)x}{2}}^{\frac{1{+}\kappa}{2}x} f(y)\int_{\max(x-y,\frac{1{-}\kappa}{1{+}\kappa}y)}^{
\frac{1{+}\kappa}{1{-}\kappa}y} f(z)\,dz\,dy\,dx\\
& \geq \frac{Ck_0}{R} \Big[ I\Big( \frac{R+1}{2}\Big)^2 + I \Big( \frac{R+1}{2}+1\Big) I\Big( \frac{R+1}{2}-1\Big) + \cdots \\
& \qquad \quad + I\Big(\frac{R+1}{2} + \Big \lfloor \frac{\kappa R}{4}\Big \rfloor \Big)  I\Big(\frac{R+1}{2} - \Big \lfloor \frac{\kappa R}{4}\Big \rfloor \Big)\Big]\,.
 \end{split}
\end{equation}
We can deduce from Lemma \ref{L.auxiliary} below that there exists $R_0>0$ such that $I(R) > \frac{8 e^a}{Ck_0\kappa} e^{-aR}$ for all $R \in [R_0,3R_0]$ for a sufficiently
large $a>0$.
Then \eqref{lowerbound1} implies with $K=\frac{8 e^a}{Ck_0\kappa}$ that
\[
 I(R) \geq \frac{Ck_0}{R} \Big \lfloor \frac{\kappa R}{2}\Big\rfloor^2 K^2 e^{-aR} > K e^{-aR)}\qquad \mbox{ for all } R \in \Big[ \frac{2R_0-1}{1-\kappa/2},\frac{2(3R_0)-1}{1+\kappa/2}\Big]\,.
\]
Choosing $\kappa$ smaller if necessary, we can assume that t $\frac{2R_0-1}{1-\kappa/2} \leq 3R_0$ and $\frac{2R-1}{1+\kappa/2}\geq \frac{3}{2} R$ for all $R \geq R_0$. This means that we have the estimate
in $[R_0,\frac{3}{2} 3 R_0]$. In the next step, we obtain the estimate in $[R_0,\frac{3^2}{2^2} 3 R_0]$ and iterating this argument we obtain
\begin{equation}\label{lowerbound2}
 I(R) \geq Ke^{-aR} \qquad \mbox{ for all } R \geq R_0\,.
\end{equation}
The pointwise lower bound now follows from using \eqref{lowerbound2} in \eqref{eq1b}, first for all $x \geq R_0$, and then by adapting the constants we obtain it for all $x \geq 1$.
\end{proof}

\begin{lemma}\label{L.auxiliary}
 There exists $\tilde R>0$ such that $f(x)>0$ for all $x \geq \tilde R$.
\end{lemma}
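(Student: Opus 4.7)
The plan is to show that the open set $S := \{x > 0 : f(x) > 0\}$ contains a half-line $(\tilde R, \infty)$, by starting from a single interval of positivity and repeatedly doubling its right endpoint using the integral equation \eqref{eq1b}.

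As a starting point, since $f \geq 0$ is continuous on $(0,\infty)$ and $\int_0^\infty x f(x)\,dx = M > 0$, there must exist $x_0 > 0$ with $f(x_0) > 0$, and by continuity $f > 0$ on an open interval $(a, b)$ with $0 < a < b < \infty$.

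The crux is the following doubling claim: if $f > 0$ on $(a, B)$ with $0 < a < B < \infty$, then $f > 0$ on $(a, 2B)$. To prove it, fix $x \in (a, 2B)$ and choose $c$ with $\max(a, x/2) < c < \min(B, x)$; this interval is nonempty because $a < x$, $x/2 < B$, and $a < B$. Now I would plug the diagonal point $(y, z) = (c, c)$ into \eqref{eq1b}. It lies strictly inside the integration region $\{0 < y < x,\ z > x - y\}$ since $c < x$ and $2c > x$, both strict. On a sufficiently small neighborhood of $(c, c)$ all of the following hold: the integration conditions $y < x$, $z > x - y$ persist (open conditions); $y, z$ remain in $(a, B)$, so by continuity of $f$ the product $f(y) f(z)$ is bounded below by a positive constant; and $|y - z|/(y+z)$ stays below $\kappa$, so \eqref{Kassump2} gives $K(y, z) \geq k_0$. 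The integrand in \eqref{eq1b} is therefore bounded below by a positive constant on a set of positive measure, forcing $x^2 f(x) > 0$.

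Applied inductively to $(a, b), (a, 2b), (a, 4b), \ldots$, the doubling claim yields $(a, 2^n b) \subset S$ for every $n \geq 0$, so $(a, \infty) \subset S$ and the lemma holds with $\tilde R = a$. I expect no serious obstacle: the only mildly clever point is the choice of the diagonal test pair $(c, c)$, which makes the lower bound on $K$ automatic for any $\kappa \in (0, 1]$, whereas any off-diagonal choice of test point would force an additional restriction such as $\kappa > 1/3$. The rest is book-keeping of openness and continuity.
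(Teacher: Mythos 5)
Your proof is correct and follows essentially the same strategy as the paper's: restrict the double integral in \eqref{eq1b} to a small product set near the diagonal where \eqref{Kassump2} gives $K\geq k_0$ and $f$ is bounded below, then iterate to extend the positivity region multiplicatively to a half-line. The only differences are cosmetic — the paper seeds the argument with a covering argument yielding a ball $B_{\kappa x_0/4}(x_0)$ of positive mass (rather than invoking continuity of $f$) and integrates over that ball squared instead of over a neighborhood of a diagonal point $(c,c)$.
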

\begin{proof}
 We first claim that there exists $x_0>0$ such that 
\begin{equation}\label{aux1}
 \int_{B_{\frac{\kappa x_0}{4}(x_0)}}x f(x)\,dx >0\,.
\end{equation}
This follows from the fact that
\[
 (0,\infty) \subset \cup_n B_{\frac{\kappa x_n}{4}(x_n)} \qquad \mbox{ with } \qquad x_n=\Big (1+ \frac{\kappa}{8}\Big)^n\,.
\]
Then
\[
 M = \int_0^{\infty} x f(x)\,dx \leq \sum_n \int_{B_{\frac{\kappa x_n}{4}(x_n)} }xf(x)\,dx 
\]
and hence there exists $n_0 \in \N$ such that \eqref{aux1} is true for $x_0:=x_{n_0}$.
Then, since 
\[
 B_{\frac{\kappa x_0}{4}}(x_0) \times  B_{\frac{\kappa x_0}{4}}(x_0) \subset \{ (y,z)\,:\, |y-z| \leq \kappa(y+z)\}\,,
\]
equation \eqref{eq1b} implies that
\[
 x^2 f(x) \geq k_0 \Big( \int_{B_{\frac{\kappa x_0}{4}(x_0)} }y f(y)\,dy \Big)^2 >0 \qquad \mbox{ for all } x \in 
\Big( x_0 \big( 1+ \frac{\kappa}{4}\big), 2x_0 \big(1-\frac{\kappa}{4}\big)\Big)\,.
\]
We can then iterate this argument to obtain $f(x)>0$ for all $x \geq x_0 \big( 1+ \frac{\kappa}{4}\big)$.

\end{proof}

\section{BV regularity}\label{S.bv}

\begin{lemma}\label{L.bv}
If $K$ satisfies \eqref{Kassump1}, \eqref{Kassump2} and \eqref{Kdiff}  if  $f$  is a solution to \eqref{eq2} and  \eqref{eq1b}, then
 $f \in BV_{\mbox{loc}}(0,\infty)$.
\end{lemma}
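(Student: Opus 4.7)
The plan is to show that $F(x) := x^2 f(x)$ is locally absolutely continuous on $(0, \infty)$, from which $f \in AC_{\mathrm{loc}}(0, \infty) \subset BV_{\mathrm{loc}}(0, \infty)$ follows immediately since $1/x^2$ is smooth on compact subsets of $(0, \infty)$. I would first establish local boundedness $f \in L^\infty_{\mathrm{loc}}(0, \infty)$ via the iterative bootstrap of Ch.~4.2 of \cite{NV12a} applied to \eqref{eq1b}, combined with Lemmas \ref{L.ubsmallxnew}--\ref{L.ublargex}. Two elementary consequences will be used repeatedly: the dyadic bound $\int_r^1 z^{-\alpha} f(z)\, dz \leq C r^{-\alpha}$ (from Lemma \ref{L.ubsmallxnew}), and the local boundedness on compact subsets of $(0, \infty)$ of $\widetilde B(s) := \int_0^s u K(u, s-u) f(u) f(s-u)\, du$ (obtained by splitting at $u = s/2$ and using \eqref{Kassump1} together with $\int_0^\infty u^{1-\alpha} f(u)\, du < \infty$ from Lemma \ref{L.negativemoment}).

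As a first step I would prove the H\"{o}lder bound $|f(x_2) - f(x_1)| \leq C|x_2 - x_1|^{1-\alpha}$ on every compact $[a, b] \subset (0, \infty)$. Setting $\Phi(x, u) := \int_{x-u}^\infty K(u, z) f(z)\, dz$, equation \eqref{eq1b} gives for $x_1 < x_2$ in $[a, b]$
\[
F(x_2) - F(x_1) = \int_{x_1}^{x_2} u f(u) \Phi(x_2, u)\, du - \int_0^{x_1} u f(u) [\Phi(x_1, u) - \Phi(x_2, u)]\, du.
\]
The first integral is $O((x_2 - x_1)^{1-\alpha})$ via $\Phi(x_2, u) \leq C(x_2 - u)^{-\alpha} + C$, which follows from the dyadic bound above. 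The second, after the substitution $s = u + z$ in its inner integral, is bounded by $\int_{x_1}^{x_2} \widetilde B(s)\, ds \leq (x_2 - x_1) \|\widetilde B\|_{L^\infty([a,b])} = O(x_2 - x_1)$. Together with the algebraic term $|x_2^2 - x_1^2| f(x_1) = O(x_2 - x_1)$ this yields the claim.

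Formal differentiation of \eqref{eq1b} produces $F'(x) = x\, I(x)$, where after using symmetry $y \leftrightarrow x - y$ in the loss integral
\[
I(x) = \int_0^{x/2} \bigl[K(x,y) f(x) - K(x-y, y) f(x-y)\bigr]\, f(y)\, dy + f(x) \int_{x/2}^\infty K(x, y) f(y)\, dy.
\]
I would show $I \in L^1_{\mathrm{loc}}(0, \infty)$. The tail piece is locally bounded. Decomposing $K(x,y) f(x) - K(x-y, y) f(x-y) = [K(x, y) - K(x-y, y)] f(x) + K(x-y, y)[f(x) - f(x-y)]$, the first summand is controlled by $|K(x, y) - K(x-y, y)| \leq C y^{1-\alpha}$ (from \eqref{Kdiff}), whose integral against $f(y)\, dy$ is finite by Lemma \ref{L.negativemoment}. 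For the singular second summand $\int_0^{x/2} K(x-y, y)[f(x) - f(x-y)] f(y)\, dy$, I would integrate in $x$ over $[a, b]$ first and apply the key rearrangement
\[
\int_a^b K(x-y, y) [f(x) - f(x-y)]\, dx = \int_a^{b-y} [K(s-y, y) - K(s, y)] f(s)\, ds + (\mbox{boundary terms}),
\]
where each contribution is $O(y^{1-\alpha})$: the main term by \eqref{Kdiff}, and the two boundary terms (each an integral of $K(\cdot, y) \leq C y^{-\alpha}$ over an interval of length $y$) by a direct estimate. The remaining integral $\int_0^{b/2} y^{1-\alpha} f(y)\, dy$ is finite by Lemma \ref{L.negativemoment}, giving $I \in L^1_{\mathrm{loc}}$ and hence $f \in BV_{\mathrm{loc}}(0, \infty)$.

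The main obstacle is precisely this singular piece: the naive estimate that combines the H\"{o}lder bound $|f(x) - f(x-y)| \leq C y^{1-\alpha}$ with $K(x-y, y) \leq C y^{-\alpha}$ yields only $O(y^{1-2\alpha})$, which fails to be integrable against $f(y)\, dy$ for $\alpha \geq 1/2$. The rearrangement above circumvents this by integrating in $x$ first and using the regularity \eqref{Kdiff} of $K$ to transfer the finite difference from $f$ onto $K$, restoring $y^{1-\alpha}$ integrability uniformly in $\alpha \in [0, 1)$.
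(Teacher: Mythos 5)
Your overall strategy (differentiate \eqref{eq1b}, split the difference $K(x,y)f(x)-K(x{-}y,y)f(x{-}y)$, and use \eqref{Kdiff} to transfer the increment from $f$ onto $K$) correctly identifies where the difficulty lies, but the decisive step fails. To prove $F\in BV_{loc}$ you must control $\int_a^b|I(x)|\,dx$, or equivalently $\big|\int I(x)\,x\,\phi(x)\,dx\big|\leq C\|\phi\|_{L^\infty}$ for \emph{arbitrary} bounded test functions $\phi$. Your rearrangement only treats $\phi=\chi_{[a,b]}$: integrating the signed quantity $\int_0^{x/2}K(x{-}y,y)[f(x)-f(x{-}y)]f(y)\,dy$ over $x\in[a,b]$ and finding the iterated integral finite does not give $I\in L^1(a,b)$ (the positive and negative parts may each be non-integrable, and Fubini itself requires the absolute integrability you are trying to establish — with only the H\"older bound $|f(x)-f(x{-}y)|\leq Cy^{1-\alpha}$ the absolute integrand is $\sim y^{1-2\alpha}f(y)$, divergent for $\alpha\geq 1/2$, as you note). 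Concretely: your per-interval bound is $C\int_0^{b/2}y^{1-\alpha}f(y)\,dy=O(1)$ \emph{independently of the interval length}, because the two ``boundary terms'' $\int K(\cdot,y)f$ over intervals of length $y$ are $O(y^{1-\alpha})$ no matter how short $[a,b]$ is. Hence a partition of $[a,b]$ into $N$ pieces yields only $\sum_i|F(x_{i+1})-F(x_i)|\leq CN$, which is no BV bound at all. For a general $\phi$ these boundary terms become $\int K(s,y)\big(\phi(s{+}y)-\phi(s)\big)f(s)\,ds$, which is exactly the term $T_2\phi$ that the paper identifies as the obstruction (see the discussion after \eqref{bv13}): it is only $O(\|\phi\|_{0,\gamma}y^{\gamma-\alpha}\cdots)$, and the paper must iterate the operator $T$ a number $L>1+\frac{1}{1-\alpha-\eps}$ of times, gaining H\"older regularity $1-\alpha-\eps$ of the test function at each step until it is Lipschitz, before the estimate closes. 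Your proposal contains no substitute for this iteration, and it is the heart of the proof.

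There is also a secondary error in the H\"older step: $\widetilde B(s)=\int_0^s uK(u,s{-}u)f(u)f(s{-}u)\,du$ is \emph{not} locally bounded by the cited lemmas. Near $u=s$ the integrand behaves like $(s{-}u)^{-\alpha}f(s{-}u)$, so boundedness would require the negative moment $\int_0 v^{-\alpha}f(v)\,dv$, which is not controlled (Lemma \ref{L.negativemoment} only gives moments $x^{1-\gamma}$ with $\gamma<1$, and the dyadic bound gives $\int_r^1 v^{-\alpha}f(v)\,dv\leq Cr^{-\alpha}$, divergent as $r\to0$). This particular point is repairable — applying Fubini to $\int_{x_1}^{x_2}\int_0^{x_1}uK(u,s{-}u)f(u)f(s{-}u)\,du\,ds$ and integrating in $s$ first for fixed $w=s-u$ does yield $O((x_2-x_1)^{1-\alpha})$ — but as written the justification is wrong, and in any case fixing it does not rescue the main BV argument.
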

\begin{proof}
In the following we let $\phi \in C^1_b(0,\infty)$ with $\mbox{supp}(\phi) \subset (0,\infty]$ and $\mbox{supp}(\phi') \subset (0,\infty)$.  Our goal is to show that
\begin{equation}\label{bv2}
 \Big|\int_0^{\infty}  f(x) \phi'(x) \,dx\Big|\leq C \|\phi\|_{L^{\infty}(0,\infty)}\,.
\end{equation}
In a first step we take $\phi$ such that $\mbox{supp}(\phi)\subset [a,
\infty]$ and $\mbox{supp}(\phi') \subset [a,b]$ for some $a,b>0$. We divide \eqref{eq1b} by $x^2$, multiply with $\phi'$, integrate and change the order of integration 
to obtain
\begin{equation}\label{bv3}
\begin{split}
 \int_0^{\infty} &f(x)  \phi'(x)\,dx= \int_0^{\infty} \int_0^{\infty} K(y,z) y f(y) f(z) \Big( \int_y^{y{+}z} \frac{\phi'(x)}{x^2}\,dx\Big)\,dz\,dy\\
& = \int_0^{\infty}  \int_0^{\infty}  \,K(y,z) y f(y) f(z) \Big( 2 \int_y^{y{+}z} \frac{\phi(x)}{x^3}\,dx + \frac{\phi(y{+}z)}{(y{+}z)^2} - \frac{\phi(y)}{y^2}\Big)\,dz\,dy\,.
\end{split}
\end{equation}
We split $\int_0^{\infty} dz = \int_0^{a/2} dz \dots + \int_{a/2}^{\infty} dz\dots$. Using the assumption on the support of $\phi$ and recalling 
Lemma \ref{L.negativemoment} and Lemma \ref{L.ublargex} we can estimate 
\begin{equation}\label{bv4}
 \begin{split}
  \Big| \int_0^{\infty} \int_{a/2}^{\infty}  &\,K(y,z) y f(y) f(z) \Big( 2 \int_y^{y{+}z} \frac{\phi(x)}{x^3}\,dx + \frac{\phi(y{+}z)}{(y{+}z)^2} - \frac{\phi(y)}{y^2}\Big)\,dz\,dy\Big|\\
& \leq C \int_{a/2}^{\infty} y^{-2} f(y) \int_{a/2}^{\infty}\Big( \Big(\frac{y}{z}\Big)^{\alpha}+\Big(\frac{z}{y}\Big)^{\alpha}\Big) f(z) \,dz\,dy \|\phi\|_{L^{\infty}} \\
&\leq C  \|\phi\|_{L^{\infty}}\,.
 \end{split}
\end{equation}
Here and in the following the constants depend on $a$, but we will not write this explicitly. Next, we write
\begin{equation}\label{bv4b}
\begin{split}
 2 \int_y^{y{+}z} \frac{\phi(x)}{x^3}\,dx & + \frac{\phi(y{+}z)}{(y{+}z)^2} - \frac{\phi(y)}{y^2} = \\
& 
 2 \int_y^{y{+}z} \frac{\phi(x)}{x^3} \,dx + \frac{\phi(y{+}z)}{(y{+}z)^2} - \frac{\phi(y{+}z)}{y^2} + \frac{1}{y^2} \big( \phi(y{+}z)-\phi(y)\big)
\end{split}
\end{equation}
and estimate for $y \geq a/2$ that 
\[
 \Big | 2 \int_y^{y{+}z} \frac{\phi(x)}{x^3}\,dx + \frac{\phi(y{+}z)}{(y{+}z)^2} - \frac{\phi(y{+}z)}{y^2} \Big| \leq C \|\phi\|_{L^{\infty}}\, z\,. 
\]
With this last estimate we find, recalling again Lemmas \ref{L.negativemoment} and \ref{L.ublargex} that 
\begin{equation}\label{bv5}
 \begin{split}
   \Big| \int_0^{\infty}  \int_0^{a/2}  K(y,z) y f(y) f(z) &\Big(2 \int_y^{y{+}z} \frac{\phi(x)}{x^3} \,dx+ \frac{\phi(y{+}z)}{(y{+}z)^2} - \frac{\phi(y{+}z)}{(y{+}z)^2} \Big)\,dz\,dy\Big|\\
& \leq C  \|\phi\|_{L^{\infty}}\,.
 \end{split}
\end{equation}
Thus, with \eqref{bv4}, \eqref{bv4b} and \eqref{bv5}, we have obtained
\begin{equation}\label{bv6}
  \Big|\int_0^{\infty}  f(x) \phi'(x) \,dx\Big|\leq C \|\phi\|_{L^{\infty}} + \Big| \int_0^{\infty} \int_0^{a/2}  K(y,z) y f(y) f(z) \frac{1}{y^2} 
\big( \phi(y{+}z)-\phi(y)\big)\,dz\,dy\Big|
\end{equation}
and it remains to estimate the last term on the right hand side of \eqref{bv6}.

Our strategy is to write this term as $\int f(y) (T\phi)'(y)$ with
\[
 (T\phi)'(y) = \frac{1}{y} \int_0^{a/2} f(z) K(y,z) \big( \phi(y{+}z)-\phi(y)\big)\,dz
\]
and
\[
 (T \phi)(y) = \int_0^y  \int_0^{a/2}  f(z)K(\xi,z) \frac{1}{\xi} \big( \phi(\xi{+}z)-\phi(\xi)\big)\,dz\,d\xi
\]

and iterate the previous estimates for  the function $T\phi$. We first need to verify that  $T\phi$ is an admissible test function. We easily check that if
$\mbox{supp} (\phi) \subset [a,\infty]$ and $\mbox{supp} (\phi') \subset [a,b]$ that then $\mbox{supp} (T\phi) \subset [a/2,\infty]$ and $\mbox{supp}( (T \phi)') \subset [a/2,b]$ and
hence we can use $T\phi$ as test function. Estimate \eqref{bv6} then implies
\begin{equation}\label{bv7}
 \Big |\int_0^{\infty}  f(y) \big(T\phi\big)'(y)\,dy\Big| \leq C  \|T\phi\|_{L^{\infty}} + 
\Big| \int_0^{\infty}  f(y) \big( T^2\phi\big)'(y)\,dy\Big|\,.
\end{equation}

Thus, our first task is to estimate $\|T\phi\|_{L^{\infty}}$. Notice first that \eqref{Kdiff} implies
\begin{equation}\label{Kdiff1}
 \Big | \frac{K(x-y,y)}{x-y} - \frac{K(x,y)}{x}\Big| \leq C y^{1-\alpha} \qquad \mbox{ for } x \geq \frac{a}{2}\;,\; y \in (0,a/2]\,.
\end{equation}
Using \eqref{Kdiff1},  $\mbox{supp}(\phi) \subset [a,\infty)$ 
 and  Lemma \ref{L.negativemoment}, we find
\begin{equation}\label{bv8}
 \begin{split}
  (T\phi)(y)& = \int_0^{a/2} f(z) \int_{a/2}^{y}  \frac{K(\xi,z)}{\xi} \phi(\xi{+}z)\,d\xi\,dz - \int_0^{a/2}  f(z) \int_a^y  \frac{K(\xi,z)}{\xi} \phi(\xi)\,d\xi\,dz\\
&= \int_{0}^{a/2}  f(z) \Big [ \int_{a/2}^y  \frac{K(\xi,z)}{\xi} \phi(\xi+z)\,d\xi - \int_a^y  \frac{K(\xi,z)}{\xi} \phi(\xi)\,d\xi\Big]\,dz\\
&= \int_0^{a/2} f(z) \int_y^{y+z} \frac{K(\xi-z,z)}{\xi-z} \phi(\xi)\,d\xi\,dz \\
&\quad + \int_0^{a/2}  f(z) \int_{a/2+z}^{y}\  \Big(
\frac{K(\xi-z,z)}{\xi-z} - \frac{K(\xi,z)}{\xi}\Big)\phi(\xi)\,d\xi\,dz \\ 
& \leq C \|\phi\|_{L^{\infty}} \int_0^{a/2}  f(z) z^{1-\alpha} \,dz+ C \int_0^{a/2} f(z) \int_{a/2+z}^y   z^{1-\alpha} \phi(\xi)\,d\xi\,dz\\
& \leq C \|\phi\|_{L^{\infty}}\,.
\end{split}
\end{equation}
It remains to estimate $\Big| \int_0^{\infty}  f(y) \big( T^2\phi\big)'(y)\,dy\Big|$. To that aim recall
\begin{equation}\label{bv9}
 \big( T^2\phi\big)'(y) = \frac{1}{y} \int_0^{a/2} f(z)K(y,z) \Big( \big(T\phi\big)(y{+}z) - \big(T\phi\big)(y) \Big)\,dz
\end{equation}
and we write
\begin{equation}\label{bv10}
 \begin{split}
(T\phi)(y+z)&-(T\phi)(y) = \int_y^{y+z} \int_0^{a/2}  f(\eta) K(\xi,\eta) \frac{1}{\xi}\big( \phi(\xi+\eta)-\phi(\xi)\big)\,d\eta\,d\xi\\
&=\int_0^{a/2} f(\eta) \Big( \int_{y+\eta}^{y+\eta+z} \frac{K(\xi-\eta,\eta)}{\xi-\eta} \phi(\xi)\,d\xi - 
\int_y^{y+z}\frac{K(\xi,\eta)}{\xi} \phi(\xi)\,d\xi\Big)\,d\eta\\
&= \int_0^{a/2}  f(\eta)  \int_{y+\eta}^{y+\eta+z} W(\xi,\eta)\phi(\xi)\,d\xi\,d\eta\\
&\quad  
+ \int_0^{a/2}  f(\eta) \Big(\int_{y+z}^{y+z+\eta}  \frac{K(\xi,\eta)}{\xi}\phi(\xi) \,d\xi- \int_y^{y+\eta}  \frac{K(\xi,\eta)}{\xi}\phi(\xi)\,d\xi\Big)\,d\eta\\
&=: (T_1\phi)(y+z)-(T_1\phi)(y) + (T_2\phi)(y+z)-(T_2\phi)(y)\,,
 \end{split}
\end{equation}
where
\[
 W(\xi,\zeta)= \frac{K(\xi- \zeta,\xi)}{\xi-\zeta} - \frac{K(\xi,\zeta)}{\xi}\,.
\]
Invoking again \eqref{Kdiff1} we find
\begin{equation}\label{bv13}
 \begin{split}
\Big|  (T_1\phi)(y{+}z)-(T_1\phi)(y)\Big| 
&= \Big| \int_0^{a/2} f(\zeta) \int_y^{y{+}z} W(\xi,\zeta) \phi(\xi)\,d\xi\,d\zeta\Big|\\
&\leq C \|\phi\|_{L^{\infty}} z \int_0^{a/2}  f(\zeta) \zeta^{1-\alpha} \,d\zeta \leq C \|\phi\|_{L^{\infty}}z\,.
\end{split}
\end{equation}
If we could obtain the same estimate for $T_2$ we would obtain from \eqref{bv13} together with  Lemma \ref{L.negativemoment} 
and the properties of  $\mbox{supp} (\phi)$ that
$\|T^2\phi\|_{0,1} \leq C \|\phi\|_{L^{\infty}}$ and the proof would be finished.

Unfortunately, we cannot in general  expect an estimate for  $T_2\phi$ as in \eqref{bv13}. The reason is that,  using \eqref{Kassump1}, we obtain 
\[
\big|(T_2\phi)(y+z)-(T_2\phi)(y)\big|
 \leq C_a \|\phi\|_{L^{\infty}} z^{1-\alpha} \int_0^{a/2}f(\eta) \,d\eta \,,
\]
but in general we do not know whether the integral $\int_0^{a/2} \,d\eta f(\eta)$ is finite and in addition the factor $z^{1-\alpha}$ causes problems in \eqref{bv9}
if $\alpha \geq 1/2$.

Thus, in order to obtain the desired estimates we need to iterate again. More precisely, we define $L \in \N$ such that $L>1+\frac{1}{1-(\alpha+\eps)}$ with
some fixed $\eps \in (0,(1-\alpha)/2)$.
We are going to show that
\begin{equation}\label{bv15}
 \|T^L\phi\|_{0,1} \leq C \|\phi\|_{L^{\infty}}\,.
\end{equation}
We have seen that $\mbox{supp}(\phi) \subseteq [a,\infty]$ implies $\mbox{supp}(T^L\phi) \subseteq [\frac{a}{2^L},\infty]$, whereas
$\mbox{supp}(\phi') \subseteq [a,b]$ implies $\mbox{supp}((T^L\phi)') \subseteq [\frac{a}{2^L},b]$. Hence, $T^L\phi$ is an admissible test function and
we obtain from \eqref{bv7}, \eqref{bv8} and \eqref{bv15} that
\[
 \Big| \int_0^{\infty}  f(y) (T^L\phi)'(y)\,dy\Big| \leq C \|T^L\phi\|_{L^{\infty}} + \Big| \int_0^{\infty} f(y) (T^L\phi)'(y)\,dy\Big|
\leq C \|\phi\|_{L^{\infty}}\,,
\]
which finishes the proof of Lemma \ref{L.bv}.

Thus, it remains to prove \eqref{bv15}. We split the integral in the definition of $T_2$ as
$\int_0^{a/2}\,d\eta \dots = \int_0^z \,d\eta \cdots + \int_z^{a/2} \,d\eta \dots$ and write
\begin{equation}\label{bv14}
 \begin{split}
H(y&,z,\eta):= \int_{y+z}^{y+z+\eta} \frac{K(\xi,\eta)}{\xi} \phi(\xi) \,d\xi- \int_y^{y+\eta}  \frac{K(\xi,\eta)}{\xi}\phi(\xi)\,d\xi\\
&=
\int_y^{y+\eta}  \Big( \frac{K(\xi+z,\eta)}{\xi+z} - \frac{K(\xi,\eta)}{\xi}\Big) \phi(\xi+z)\,d\xi + \int_y^{y+\eta} \frac{K(\xi,\eta)}{\xi} \big(\phi(\xi+z)-\phi(\xi)\big)\,d\xi\\
&=:(I)+(II)\,.
 \end{split}
\end{equation}
Assumption \eqref{Kdiff} implies that
\[
 |(I)| \leq C_a \|\phi\|_{L^{\infty}} \eta^{1-\alpha} z\,,
\]
while \eqref{Kassump1} gives
\[
|(II)| \leq C_a  \|\phi\|_{0,\gamma} \eta^{1-\alpha} z^{\gamma} 
\]
for $\gamma \in [0,1)$ such that
\begin{equation}\label{bv16}
\Big| \int_0^z  f(\eta) H(y,z,\eta) \,d\eta\Big| \leq C\Big( \|\phi\|_{L^{\infty}} z \int_0^z  \eta^{1-\alpha} f(\eta)\,d\eta  
+ \|\phi\|_{0,\gamma} z^{\gamma} \int_0^z \eta^{1-\alpha} f(\eta)\,d\eta\,.
\end{equation}
A dyadic argument, as in the proof of Lemma \ref{L.negativemoment}, implies that $\int_0^z \eta^{1-\alpha} f(\eta) \,d\eta\leq C z^{1-\alpha}$ 
such that
\begin{equation}\label{bv17}
\Big| \int_0^z  f(\eta) H(y,z,\eta)\,d\eta \Big| \leq C\Big( \|\phi\|_{L^{\infty}} z^{2-\alpha} + \|\phi\|_{0,\gamma} z^{1-\alpha+\gamma}\Big) \,.
\end{equation}
Next, we notice that in the case that $\eta >z$ it holds
\[
 H(y,z,\eta)= \int_{y+\eta}^{y+\eta+z} \,d\xi \dots + \int_y^{y+z} \,d\xi \dots\,.
\]
Hence we can derive, using \eqref{Kdiff1}, that 
\begin{equation}\label{bv18}
 \begin{split}
\Big| \int_{y+\eta}^{y+\eta+z}  & \frac{K(\xi,\eta)}{\xi} \phi(\xi)\,d\xi - \int_y^{y+z} \frac{K(\xi,\eta)}{\xi} \phi(\xi)\,d\xi\Big|
\\ & = \Big| \int_y^{y+z} \Big[ \Big( \frac{K(\xi+\eta,\eta)}{\xi+\eta} - \frac{K(\xi,\eta)}{\xi} \Big) \phi(\xi+\eta)  +
\frac{K(\xi,\eta)}{\xi} \big( \phi(\xi+\eta) - \phi(\xi)\big)\Big]\,d\xi \Big|\\
& \leq C \|\phi\|_{L^{\infty}} z \eta^{1-\alpha} + C \|\phi\|_{0,\gamma} z \eta^{\gamma-\alpha}\,.
 \end{split}
\end{equation}
Using estimate \eqref{bv18} we obtain
\begin{equation}\label{bv19}
 \Big|\int_z^{a/2} f(\eta) H(y,z,\eta)\,d\eta\Big| \leq C \|\phi\|_{L^{\infty}} z + C \|\phi\|_{0,\gamma} z \int_z^{a/2}  \eta^{\gamma-\alpha} f(\eta)\,d\eta\,.
\end{equation}
We assume that $\gamma \leq \alpha$, since eventually we want to use the estimate also for $\gamma =0$. We obtain with Lemma \ref{L.negativemoment}
\begin{equation}\label{bv20}
 \int_z^{a/2}  \eta^{\gamma-\alpha} f(\eta) \,d\eta\leq C_{\eps} z^{\gamma-\alpha - \eps}\,.
\end{equation}
 In summary, \eqref{bv10}, \eqref{bv17}, \eqref{bv19} and  \eqref{bv20} imply
\[
 \Big| (T\phi)(y+z) - (T\phi)(y)\Big| \leq C\big( \|\phi\|_{L^{\infty}} z + \|\phi\|_{0,\gamma} z^{1-\alpha-\eps + \gamma} \big)
\]
and consequently
\begin{equation}\label{bv21}
 \|T\phi\|_{0,1-\alpha-\eps+\gamma} \leq C \|\phi\|_{0,\gamma}\,.
\end{equation}
We can iterate \eqref{bv21} to obtain
\begin{equation}\label{bv22}
 \|T^L\phi\|_{0,1-\alpha-\eps+\gamma_L} \leq C \|T^{L-1}\phi\|_{0,\gamma_L} 
=C \|T^{L-1}\phi\|_{0,1-\alpha-\eps +\gamma_{L-1}} \leq \dots \leq C \|\phi\|_{0,\gamma_1}\,.
\end{equation}
With the definitions $\gamma_\ell:=\min(1,(\ell-1)(1-\alpha-\eps))$, $\ell=1,\dots,L$ 
and the choice of $L$ we have $\gamma_L \geq 1$ and $\gamma_1=0$ which finishes the proof of \eqref{bv15}.
\end{proof}


\section{Asymptotics as $x \to \infty$}\label{S.asymptotics}

\subsection{A lower bound on the changes of $\log \frac{1}{f(x)}$}
Writing 
\[
 f(x)=e^{-x a(x)}
\]
our first goal is to derive a lower bound on $a'(x)$. Notice that since $f \in C^0 \cap BV_{loc}$ and $f>0$, we also have that $a \in BV_{loc}(0,\infty)$ (see e.g. \cite{AmDalMa90}).

Our first goal will be to derive a lower bound on $a'$. For that purpose we need the following Lemma.
\begin{lemma}\label{L.lowerasymp1}
Suppose that $a\colon \R\to \R$ such that
\begin{equation}\label{lowerasymp1a}
 0<\alpha_1 \leq a(x) \leq \alpha_2< \infty \qquad \mbox{ for all } x \geq 1\,.
\end{equation}
Then, given an $\eps\in(0,1)$ and $R\geq 1$ there exists $\bar x  \in I_R:=\Big( R\big(1+\frac{\eps}{4}\big), R\big(1+\frac{3\eps}{4}\big)\Big)$ such that
\[
 a(y) \leq a(\bar x) + \frac{C(\eps)}{\bar x} \qquad \mbox{ for all } y \in [\bar x-1,\bar x] \cap I_R
\]
 with $C(\eps)= \frac{4(\alpha_2-\alpha_1)}{\eps}$.
\end{lemma}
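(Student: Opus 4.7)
The plan is to argue by contradiction combined with a greedy iteration. Suppose the conclusion fails, so that for every $\bar x \in I_R$ there exists some $y = y(\bar x) \in [\bar x - 1, \bar x] \cap I_R$ with
\[
a(y(\bar x)) > a(\bar x) + \frac{C(\eps)}{\bar x}.
\]
The strict inequality forces $y(\bar x) \neq \bar x$, hence $y(\bar x) < \bar x$, and since $y(\bar x) \in I_R$ one has $y(\bar x) > R(1+\eps/4)$.

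Starting from any $x_0 \in I_R$, I would inductively define $x_{k+1} := y(x_k)$. Since $y$ sends $I_R$ into $I_R$, the iteration never leaves $I_R$ and produces a strictly decreasing sequence. Using $x_k \leq x_0 < R(1+3\eps/4) < 2R$ (recall $\eps < 1$), each step contributes
\[
a(x_{k+1}) - a(x_k) > \frac{C(\eps)}{x_k} \geq \frac{C(\eps)}{R(1+3\eps/4)}.
\]
Telescoping over $n$ steps gives $a(x_n) - a(x_0) > n\,C(\eps)/(R(1+3\eps/4))$, which tends to $\infty$ as $n \to \infty$. This contradicts the a priori upper bound $a \leq \alpha_2$ valid on $[1,\infty) \supset I_R$. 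Quantitatively, with the specific choice $C(\eps) = 4(\alpha_2 - \alpha_1)/\eps$, the contradiction kicks in after roughly $n \sim R\eps/2$ steps, which is precisely the number of unit-length steps one can fit inside $I_R$ before exiting from the left; this is the reason for the explicit value of $C(\eps)$, even though any positive constant would suffice to produce the same contradiction.

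The main point to verify is that the iteration really proceeds indefinitely without stalling. Because the chosen $y(x_k)$ lies strictly below $x_k$ and strictly above $R(1+\eps/4)$, the sequence $\{x_k\}$ is bounded, strictly decreasing, and consists of valid input points for $y(\cdot)$; thus one never runs out of iterations. With uniformly positive increments in $a$ and a fixed upper bound $\alpha_2$, this produces the contradiction after finitely many steps, completing the proof.
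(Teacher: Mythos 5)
Your proof is correct and is essentially the paper's own argument: negate the conclusion, iterate the resulting map $\bar x\mapsto y(\bar x)$ inside $I_R$, and contradict the upper bound $a\leq\alpha_2$ via the telescoping sum of increments of size at least $C(\eps)/x_0$. The only (harmless) difference is that the paper separately disposes of the regime $R\eps\leq 2$ by a direct computation and runs the iteration for exactly $\lfloor R\eps/2\rfloor$ steps, whereas you observe, correctly, that the iteration never exits $I_R$ and hence yields the contradiction for any positive constant.
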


\begin{proof} 
Consider first the case that $R \eps \geq 2$. Then we have
\[
 a(y) \leq \alpha_2 = \alpha_1+\frac{C(\eps)\eps}{4} \leq \alpha_1 + \frac{C(\eps)}{2R} \leq \alpha_1 + \frac{C(\eps)}{2} 
 \frac{\big(1+\frac{3\eps}{4}\big)}{\bar x}
\leq a(\bar x) + \frac{C(\eps)}{\bar x}
\]
for all $\bar x \leq R\big( 1+\frac{3\eps}{4}\big)$.

Now assume that $R\eps >2$. Suppose there exists $R \geq \frac{2}{\eps}$ such that for all $\bar x \in I_R$ there exists
$y \in [\bar x-1,\bar x]\cap I_R$ with $a(y)>a(\bar x) + \frac{C(\eps)}{\bar x}$. We define a sequence $(x_n)$ as follows: we set $x_0=R(1+\frac{3\eps}{4})$.
By assumption there exists $x_1 \in [x_0-1,x_0]\cap I_R$ such that $a(x_1)>a(x_0)+ \frac{C}{x_0}$. Iteratively we obtain $x_n \in [x_{n-1}-1,x_{n-1}]\cap I_R$ 
such that $a(x_n) \geq a(x_0) + C \big( \frac{1}{x_0} + \dots + \frac{1}{x_{n-1}}\big) \geq a(x_0) + \frac{Cn}{x_0}$. For $n=\lfloor \frac{R\eps}{2}\rfloor$ we obtain
\[
a(x_n) \geq \alpha_1 + \frac{4 (\alpha_2-\alpha_1)}{\eps} \frac{R\eps}{2 R\big( 1+\frac{3\eps}{4}\big)}
=\alpha_1 + \frac{2 (\alpha_2-\alpha_1)}{ 1+ \frac{3\eps}{4}} > \alpha_2
\]
which gives a contradiction and finishes the proof of the Lemma.
\end{proof}

\begin{lemma}\label{L.lowerasymp2}
There exists $R>0$ and $b>0$  such that 
\[
 \int_{x}^{x+1} (a')_{-}(x)\,dx \leq \frac{C}{x}+ C_R e^{-b x}\qquad \mbox{ for all } x \geq R\,.
\]
Here we use the Jordan decomposition $a'(x)=(a')_{+}(x)-(a')_{-}(x)$ 
with $(a')_{+}\geq 0$ and $(a')_{-}\geq 0$. 
\end{lemma}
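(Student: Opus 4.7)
The plan is to derive a one-sided differential identity for $a(x)$ directly from the stationary coagulation equation, in which the convolution (gain) term appears with a favourable sign, and then integrate on the unit interval $[x,x+1]$.

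Writing $f(x)=e^{-x a(x)}$ gives $f'/f=-a-x a'$, hence $x f'/f=-x a - x^2 a'$. Dividing the stationary equation \eqref{eq1} by $f$ and rearranging yields
\[
x^2 a'(x)=-x a(x)+2-I(x)+\frac{1}{2 f(x)}\int_0^x K(y,x-y) f(y) f(x-y)\,dy,
\qquad I(x):=\int_0^\infty K(x,z) f(z)\,dz.
\]
Since $f$ is continuous and positive and $x^2 f(x)$ is absolutely continuous in view of \eqref{eq1b} together with Lemmas \ref{L.negativemoment} and \ref{L.ublargex}, the function $a$ is in $AC_{\mathrm{loc}}(0,\infty)$ and the identity holds pointwise a.e.\ in $x$. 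The crucial observation is that the convolution term on the right is nonnegative, so we obtain the one-sided bound
\[
x^2 (a')_-(x)\le \bigl(x a(x)+I(x)-2\bigr)_+\le x a(x)+I(x)\qquad\text{for a.e.\ }x>0.
\]

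To control $I(y)$ for large $y$ I split the integral at $z=y/2$. On the tail $z>y/2$, the upper bound $f(z)\le C e^{-\alpha_2 z}$ from Theorem \ref{T1} combined with \eqref{Kassump1} and a dyadic decomposition gives $\int_{y/2}^\infty K(y,z) f(z)\,dz \le C_R e^{-b y}$ for some $b>0$. On the body $z\le y/2$, I use $K(y,z)\le K_0(y/z)^\alpha+K_0(z/y)^\alpha$ from \eqref{Kassump1} to bound $\int_0^{y/2} K(y,z) f(z)\,dz\le C y^\alpha$ using the moment bounds of Lemma \ref{L.negativemoment} together with the dyadic estimate \eqref{ubsmallx1} near $z=0$. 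Altogether $I(y)\le C y^\alpha+C_R e^{-b y}$.

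Dividing the one-sided inequality by $y^2$ and integrating over $[x,x+1]$, then using $a\le\alpha_2$ from Theorem \ref{T1} and $\alpha<1$, I obtain
\[
\int_x^{x+1}(a')_-(y)\,dy\le \int_x^{x+1}\Bigl(\frac{a(y)}{y}+\frac{I(y)}{y^2}\Bigr)dy\le \frac{\alpha_2}{x}+\frac{C}{x^{2-\alpha}}+C_R e^{-b x}\le \frac{C}{x}+C_R e^{-b x},
\]
which is the claimed bound. The main obstacle is the rigorous justification of the differential identity and of the finiteness of $I(x)$: the small-$z$ contribution to $I(x)$ potentially diverges because the a priori estimates of Lemma \ref{L.negativemoment} do not cover negative powers of $z$. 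This must be addressed by working instead with a weak form of \eqref{eq1b} tested against $C^1$ functions supported in $(0,\infty)$, in the spirit of the BV argument of Section \ref{S.bv}, and by a careful exploitation of the dyadic bound \eqref{ubsmallx1} combined with the upper bound of Lemma \ref{L.ublargex} to estimate the resulting integrals.
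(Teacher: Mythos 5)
There is a genuine gap, and it is not merely the technical issue of rigor you flag at the end --- it defeats the core inequality of your argument. Your strategy is to drop the gain (convolution) term because of its sign and to bound the loss term $I(x)=\int_0^\infty K(x,z)f(z)\,dz$ separately. But $I(x)$ is not controlled by the available a priori estimates and is generically infinite for the singular kernels this lemma must cover: near $z=0$ one has $K(x,z)\sim C(x/z)^{\alpha}$, and the only information on $f$ near the origin is \eqref{ubsmallx1}, i.e.\ $\int_{2^{-(n+1)}}^{2^{-n}}zf(z)\,dz\le C2^{-n}$, which yields $\int_{2^{-(n+1)}}^{2^{-n}}z^{-\alpha}f(z)\,dz\le C2^{n\alpha}$ and a divergent dyadic sum for every $\alpha>0$ (even for $\alpha=0$ the sum diverges logarithmically, since Lemma \ref{L.negativemoment} only covers $\gamma<1$). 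Indeed $f$ may behave like $z^{-1-\lambda}$ as $z\to0$, in which case $I(x)=+\infty$ and your inequality $x^2(a')_-\le xa+I$ is vacuous. No refinement of the dyadic bound can repair this, because the individual loss integral really is divergent; the finiteness in \eqref{eq1b} comes only from the cancellation between gain and loss.

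The paper's proof is built precisely around that cancellation: for $y\in(0,1)$ (equivalently, near the diagonal of the convolution) it keeps the combination $K(x,y)f(x)-K(x-y,y)f(x-y)$ together, splits it into a part controlled by the derivative bound \eqref{Kdiff} (giving the integrable weight $y^{1-\alpha}f(y)$) and a part $1-f(x-y)/f(x)\le x\int_{x-y}^{x}(a')_-$. This makes the estimate self-referential: one obtains an integral inequality
$F(x)\le C/x+Cx^{\alpha-1}\int_{x-1}^{x}(a')_-(\xi)(x-\xi)^{-\alpha}d\xi$ for $F(x)=\int_x^{x+1}(a')_-$, whence $F(x)\le C/x+\tfrac12F(x-1)$ for large $x$, and the term $C_Re^{-bx}$ in the statement arises from iterating this recursion down to $F(R-1)$ --- not, as in your proposal, from the tail of $I$. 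Your argument contains no analogue of this Gronwall-type structure, so even granting absolute continuity of $a$ (which is also not established; the paper only has $a\in BV_{loc}$ and must work with the weak formulation and the Jordan decomposition as measures), the proposal does not prove the lemma.
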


\begin{proof}
In order to make the idea of the proof clear, we first present the formal derivation of the result, that is we ignore for the moment that 
some of the integrals are not well-defined.
From \eqref{eq1} we obtain 
\begin{equation}\label{lowerasymp1}
\begin{split}
 f'(x)& = - \frac{2}{x}f(x) + \frac{1}{x}  \int_0^1 \big( K(x,y) f(x) - K(x{-}y,y) f(x{-}y) \big) f(y) \,dy
\\
& \quad + \frac{1}{x} \Big( \int_1^{\infty}  K(x,y) f(x)f(y) \,dy - \int_1^{x/2}  K(x{-}y,y) f(x{-}y)f(y)\,dy\Big)\,.
\end{split}
\end{equation}
Using  Lemma \ref{L.ublargex} to conclude that $\frac{1}{x}\int_1^{\infty} dy\, K(x,y) f(y)\leq Cx^{\alpha-1}$,
 we find
\begin{equation}\label{lowerasymp2}
 f'(x) \leq \frac{C}{x^{1-\alpha}} f(x) + \frac{f(x)}{x} \int_0^1 K(x{-}y,y) f(y) \Big(\frac{ K(x,y)}{K(x{-}y,y)}  -  \frac{f(x{-}y)}{f(x)}\Big) f(y)\,dy\,.
\end{equation}
Using \eqref{Kdiff} and Lemma \ref{L.negativemoment} we can estimate for $x \geq 1$ that
\[
\begin{split}
\Big | \frac{f(x)}{x} \int_0^1 K(x{-}y,y) f(y) \Big(  \frac{ K(x,y)}{K(x{-}y,y)} -1\Big)\,dy \Big| 
& \leq \frac{f(x)}{x} C \int_0^1 \Big( \frac{x}{y}\Big)^{\alpha} \frac{y}{x} f(y)\,dy \\
&\leq \frac{f(x)}{x^{2-\alpha}} \leq \frac{f(x)}{x^{1-\alpha}}
\end{split}
\]
and thus we can absorb this error term into the first term of the right hand side of \eqref{lowerasymp2}.
In terms of $a$ we have found the inequality
\begin{equation}\label{lowerasymp3}
 \begin{split}
  \big( x a(x)\big)' & \geq - \frac{C}{x^{1-\alpha}} + \frac{1}{x} \int_0^1  K(x{-}y,y)f(y)  \Big( e^{-(x{-}y) a(x{-}y) + xa(x)} -1\Big) \,dy\\
& = - \frac{C}{x^{1-\alpha}} + \frac{1}{x} \int_0^1  K(x{-}y,y) f(y)\Big( e^{ya(x{-}y)} e^{x(a(x)-a(x{-}y))} -1\Big) \,dy \\
& \geq - \frac{C}{x^{1-\alpha}} + \frac{1}{x} \int_0^1 K(x{-}y,y)f(y)\Big(e^{x(a(x)-a(x{-}y))} -1\Big) \,dy\,.
 \end{split}
\end{equation}
Estimate \eqref{lowerasymp3} implies  that
\begin{equation}\label{lowerasymp4}
 x (a')_{+}(x)-x (a')_{-}(x)+a(x)\geq - \frac{C}{x^{1-\alpha}}+ \frac{1}{x}  \int_0^1  K(x{-}y,y)f(y)\Big(e^{x(a(x)-a(x{-}y))} -1\Big)\,dy\,.
\end{equation}
Furthermore, since $a$ is continuous, we can write and estimate
\[
 a(x)-a(x-y) = \int_{x{-}y}^x a'(\xi)\,d\xi \geq - \int_{x{-}y}^x (a')_{-}(\xi) \,d\xi \,.
\]
Thus we have, using $1-e^{-z} \leq z$ for $z \geq 0$ and a dyadic argument as in Lemma \ref{L.negativemoment}, that
 \begin{equation}\label{lowerasymp6}
\begin{split}
 x (a')_{-}(x) &\leq \alpha_2 +\frac{C}{x^{1-\alpha}} + \frac{1}{x} \int_0^1  K(x{-}y,y) f(y)\Big(1- e^{-x \int_{x{-}y}^x (a')_{-}(\xi) \,d\xi } \Big) \,dy\\
& \leq C + C \int_0^1  \int_{x{-}y}^x (a'(\xi))_{-}\,d\xi \frac{(x-y)^{\alpha}}{y^{\alpha}} f(y)\,dy\\
&\leq C + C \int_{x{-}1}^x (a')_{-}(\xi) \int_{x{-}\xi}^1 \frac{f(y)}{y^{\alpha}} (x-y)^{\alpha} \,dy \,d\xi\\
& \leq C + C x^{\alpha} \int_{x{-}1}^x (a')_{-}(\xi)\frac{1}{(x-\xi)^{\alpha}}\,d\xi\,.
\end{split}
\end{equation}
As a consequence, we obtain
\[
 (a')_-(x) \leq \frac{C}{x} + \frac{C}{x^{1-\alpha}} \int_{x-1}^x (a')_-(\xi) \frac{1}{(x-\xi)^{\alpha}}\,d\xi
\]
 and with $F(x):= \int_x^{x+1} (a')_-(y)\,dy$ it follows 
\begin{equation}\label{lowerasymp7}
\begin{split}
F(x)& \leq \frac{C}{x} + \frac{C}{x^{1-\alpha}} \int_{x}^{x+1} \int_{\xi-1}^{\xi} \frac{(a')_-(y)}{(\xi-y)^{\alpha}}\,dy\,d\xi\\
& \leq \frac{C}{x} + \frac{C}{x^{1-\alpha}} \int_{x-1}^{x+1} (a')_-(y) \int_{\max(x,y)}^{\max(x+1,y+1)} \frac{1}{(\xi-y)^{\alpha}}\,d\xi \,dy\\
& \leq \frac{C}{x} + \frac{C}{x^{1-\alpha}} F(x-1) + \frac{C}{x^{1-\alpha}} F(x)\,.
\end{split}
\end{equation}
If $x \geq R$ where $C/R^{1-\alpha} \leq \frac 1 2$ we have found that
\begin{equation}\label{lowerasymp8}
 F(x) \leq \frac{C}{x} + \frac 1 2 F(x-1) \qquad \mbox{ for all } x \geq R\,.
\end{equation}
Since $a \in BV_{loc}$ we can assume that $F(R-1)<\infty$. Hence iterating \eqref{lowerasymp2} gives the desired result.

In order to derive \eqref{lowerasymp7} rigorously we have to work with the weak formulation of \eqref{lowerasymp1}. Multiplying with a
test function $\phi \in C_0^1(R,\infty)$ with sufficiently large $R\geq 1$, we find
\begin{equation}\label{rigorous1}
 - \int_0^{\infty} x^2 f(x)\phi'(x)\,dx = \int_0^{\infty} \int_0^{\infty} K(y,z) y \big ( \phi(y)-\phi(y+z)\big) f(y) f(z)\,dy\,dz\,.
\end{equation}
From \cite{AmDalMa90} we know that 
\begin{equation}\label{rigorous2}
- \int_0^{\infty} x^2 f(x)\phi'(x)\,dx = - \int_0^{\infty} x^2 \big( a(x) + x a'(x)\big) f(x)\phi(x)\,dx\,.
\end{equation}
We are going to show that 
\begin{equation}\label{rigorous3}
 - \int_0^{\infty} x^2 f(x)\phi'(x)\,dx \leq C \int_0^{\infty} x^{2{+}\alpha} \phi(x)f(x)\,dx + C \int_0^{\infty}
 \big( a'\big)_-(\xi)\int_{\xi}^{\xi+1} \frac{x^{2+\alpha} f(x)\phi(x)}{(x-\xi)^{\alpha}}\,dx\,d\xi\,.
\end{equation}
Plugging \eqref{rigorous2} into \eqref{rigorous3} and absorbing the term $\int x^2 a(x) f(x)\phi(x)\,dx$ into the first term on the right hand side
we obtain
\begin{equation}\label{rigorous4}
\begin{split}
 -\int_0^{\infty} x^3 a'(x) f(x)\phi(x)\,dx 
 &\leq C \int_0^{\infty} x^{2+\alpha} \phi(x)f(x)\,dx\\
& + C \int_0^{\infty}
 \big( a'\big)_-(\xi)\int_{\xi}^{\xi+1} \frac{x^{2+\alpha} f(x)\phi(x)}{(x-\xi)^{\alpha}}\,dx\,d\xi\,.
\end{split}
\end{equation}
Given a set $A \subset \R$ we take (after approximation) $\phi = \frac{\chi_A(x)}{f(x)}$, where the support of $A$ is such that $f>0$ on $A$
(cf. Lemma \ref{L.auxiliary}).
Defining 
\[
 \nu(A):= \int_0^{\infty} \big( a'\big)_-(\xi)\int_{\xi}^{\xi+1} \frac{x^{2+\alpha} \chi_A(x)}{(x-\xi)^{\alpha}}\,dx\,d\xi
\]
we obtain
\begin{equation}\label{rigorous5}
 -x^3 \big( a'\big)(A)\leq C\int_{A} x^{2+\alpha} \,dx + \nu(A)\,.
\end{equation}
We also know that given $A$ there exists $A_{\pm}$ such that
$\big(a'\big)_+(A_-)=0$, $\big(a'\big)_-(A_+)=0$ and 
$\big(a'\big)_+(A \cap A_+)= \big(a'\big)_+(A)$, $\big(a'\big)_-(A\cap A_-)=\big(a'\big)_-(A)$. Then we deduce from \eqref{rigorous5} that
\begin{equation}\label{rigorous6}
 x^3 \big(a'\big)_-(A) \leq C \int_A x^{2+\alpha}\,dx + \nu(A)
\end{equation}
and by choosing $A=[\bar x,\bar x+1]$ we obtain 
\[
 \begin{split}
\int_{\bar x}^{\bar x+1} \big(a'\big)_-(\xi)\,d\xi & \leq \frac{C}{\bar x} + \frac{C}{{\bar x}^3} \int_0^{\infty}
\big( a'\big)_- (\xi) \int_{\max(\xi,\bar x)}^{\max(\xi+1,\bar x+1)} \frac{x^{2+\alpha}}{(x-\xi)^{\alpha}}\,dx\,d\xi\\
& \leq \frac{C}{\bar x} + \frac{C}{{\bar x}^{1-\alpha}} \int_{[\bar x-1,\bar x+1]} \big(a'\big)_-(\xi)\,d\xi\,, 
 \end{split}
\]
which is just 
 \eqref{lowerasymp7}.

It remains to prove \eqref{rigorous3}. To that aim we rewrite the right hand side of \eqref{rigorous1} as
\[
 \int_0^{\infty} \,dy \int_1^{\infty} \,dz \dots +\int_0^{\infty} \,dy \int_0^1\,dz \dots =: (I)+(II)\,.
\]
Due to the properties of $\phi$ and $K$ we have
\[
 |(I)| \leq C \int_0^{\infty}y^{1+\alpha} f(y)\phi(y)\,dy \leq C \int_0^{\infty}y^2f(y)\phi(y)\,dy\,.
\]
We rewrite the second term as
\[
 \begin{split}
  (II)&=\int_0^{\infty}\int_0^1 f(y)f(z) \big[y K(y,z) \phi(y) -(y-z)K(y-z,z)\phi(y)\big]\,dz\,dy\\
  & \quad + \int_0^{\infty} \int_0^1 f(y)f(z) \big[(y-z)K(y-z,z)\phi(y) - yK(y,z)\phi(y+z)\big]\,dz\,dy\\
  &=:(II)_a+(II)_b\,.
 \end{split}
\]
Due to \eqref{Kdiff}, \eqref{ubsmallx} and the properties of $\phi$ we can  estimate 
\[
 \big|(II)_a\big|  \leq C \int_0^{\infty} f(y) \phi(y) y^{\alpha} \int_0^1 f(z) z^{1-\alpha}\,dz\,dy \leq  
C \int_0^{\infty} y^2 f(y) \phi(y)\,dy\,.
\]
As before in the formal argument we have the estimate
\begin{equation}\label{rigorous7}
\frac{f(y)}{f(y+z)} = e^{z a(z)} e^{y(a(y+z)-a(y))} \geq \exp\Big(-y \int_y^{y+z} \big(a'\big)_-(\xi)\,d\xi\Big)
\geq 1-y \int_y^{y+z} \big(a'\big)_-(\xi)\,d\xi\,.
\end{equation}
After a change of variables the term $(II)_b$ can be estimated as  
\[
 \begin{split}
 (II)_b&= \int_0^{\infty} \int_0^1 f(z) \big( f(y+z)-f(y)\big) y   K(y,z) \phi(y+z)\,dz\,dy\\
& \leq \int_0^{\infty} \int_0^1 f(z) f(y+z) y^2   K(y,z) \phi(y+z)\int_y^{y+z} \big( a'\big)_-(\xi)\,d\xi\,dz\,dy\\
&= \int_0^{\infty} \int_0^1 f(z) f(y) (y-z)^2 K(y-z,z)\phi(y) \int_{y-z}^y  \big( a'\big)_-(\xi)\,d\xi\,dz\,dy\\
& = \int_0^{\infty} \big(a'\big)_-(\xi) \int_{\xi}^{\xi+1} f(y)\phi(y) \int_{y-\xi}^1 f(z) (y-z)^2 K(y-z,z)\,dz\,dy\,d\xi\\
& \leq C \int_0^{\infty} \big(a'\big)_-(\xi)  \int_{\xi}^{\xi+1} y^{2+\alpha} (y-\xi)^{-\alpha} f(y) \phi(y)\,dy\,d\xi
 \end{split}
\]
and this implies \eqref{rigorous3}.

Strictly speaking in the previous argument we have been adding and subtracting a term that is possibly infinity due to divergences as $z \to 0$.
This difficulty can be removed, by integrating $z$ over the interval $(\eps,1)$ first, performing all the operations and finally let $\eps \to 0$.
\end{proof}

\ignore{
\begin{corollary}\label{C.lowerasymp2}
 There exists $R>0$ such that 
\[
 \int_{x^*}^{x^*+1} (a')_{-}(x)\,dx \leq \frac{C}{x^*} \qquad \mbox{ for all } x^*\geq R\,.
\]
As a consequence, we obtain
\[
a(x)-a(x^*) \geq - \frac{C}{x^*} \big( 1+(x-x^*)\big) \qquad \mbox{ for all } x \in [x^*,2x^*]\,. 
\]
\end{corollary}
\begin{proof}
We take a sequence $(R_n)$ as in Corollary \ref{C.lowerasymp1}. Then, by Lemma \ref{L.lowerasymp2} we have
\[
 \int_{x^*}^{x^*+1} (a')_{-}(x)\,dx \leq \frac{C}{R_n}  \qquad \mbox{ for all } x^* \in [R_n,2R_n]\,.
\]
Hence
\[
  \int_{x^*}^{x^*+1} (a')_{-}(x)\,dx \leq \frac{C}{x^*}  \qquad \mbox{ for all } x^* \in [R_n,2R_n]\,.
\]
Since $2R_n \geq (1+\eps) R_n \geq R_{n+1}$ we can iterate this estimate and obtain the desired statement. 

\end{proof}
}

\subsection{The limit $\frac{1}{x} \log \frac{1}{f(x)}$ exists}

\begin{lemma}\label{L.limit1} (Doubling Lemma)
There exist $R,C>0$ such that for 
$ x \geq R$ we have 
\[
 a(X)-a(x) \leq \frac{\log\big( C(X +1)\big)}{X} \qquad \mbox{ for } X=2 x -2\,.
\]
\end{lemma}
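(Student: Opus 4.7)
The plan is to obtain a pointwise lower bound on $f(X)$ for $X=2x-2$ by restricting the double integral in \eqref{eq1b} to a small box centred near the half-point $x$, and to then translate this lower bound on $f(X)$ into an upper bound on $a(X)-a(x)$ by taking logarithms. The key ingredients will be the lower bound \eqref{Kassump2} on $K$ together with the control on the negative variation of $a$ supplied by Lemma \ref{L.lowerasymp2}.

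More concretely, I would evaluate \eqref{eq1b} at $X=2x-2$ and restrict the $y$ integration to $[x-1,x]$ and the $z$ integration to $[x-1,x]$. Since for such $y$ we have $y+z \geq 2x-2 = X$, the constraint $z\geq X-y$ is automatically met. For $x$ large enough, namely $x\geq 1+1/(2\kappa)$, any pair $(y,z)$ in this box satisfies $|y-z|\leq 1 \leq \kappa(y+z)$, so \eqref{Kassump2} yields $K(y,z)\geq k_0$. This produces
\begin{equation*}
X^2 f(X) \;\geq\; k_0\,(x-1)\,\Bigl(\int_{x-1}^{x} f(y)\,dy\Bigr)^{\!2}.
\end{equation*}

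Next, to lower bound the inner integral in terms of $f(x)$, I would use Lemma \ref{L.lowerasymp2}. Because $a\in BV_{loc}$ and $a$ is continuous (since $f\in C^0$ is positive for large $x$ by Lemma \ref{L.auxiliary}), the Jordan decomposition gives, for $y\in[x-1,x]$,
\begin{equation*}
a(y)-a(x) \;\leq\; \int_{y}^{x} (a')_{-}(\xi)\,d\xi \;\leq\; \int_{x-1}^{x} (a')_{-}(\xi)\,d\xi \;\leq\; \frac{C}{x}+C_R e^{-bx}.
\end{equation*}
Multiplying by $y\leq x$ and exponentiating shows $f(y)\geq e^{-C'} f(x)$ uniformly on $[x-1,x]$, so $\int_{x-1}^{x} f(y)\,dy \geq e^{-C'} f(x)$. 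Combined with the previous display, this yields $X^2 f(X)\geq c\,(x-1)\,f(x)^2$ with a constant $c>0$ independent of $x$.

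Finally, taking logarithms of $f(X)=e^{-Xa(X)}$ and $f(x)=e^{-xa(x)}$, and using $2x=X+2$ and $x-1=X/2$, I obtain
\begin{equation*}
X\,a(X) \;\leq\; (X+2)\,a(x) + \log\!\Bigl(\frac{X^{2}}{c(x-1)}\Bigr) \;=\; X\,a(x) + 2\,a(x) + \log\!\Bigl(\frac{2X}{c}\Bigr).
\end{equation*}
Dividing by $X$ and using the a priori bound $a(x)\leq \alpha_2$ from Theorem \ref{T1} bundles everything into a single constant and gives $a(X)-a(x)\leq \log(C(X+1))/X$, as required. The only subtle step is the passage from the integral lower bound to a pointwise lower bound on $f$ in $[x-1,x]$; this is precisely where Lemma \ref{L.lowerasymp2} is indispensable, since without it $a$ could in principle spike upward on that interval and destroy the estimate.
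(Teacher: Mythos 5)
Your proposal is correct and follows essentially the same route as the paper: restrict the integral in \eqref{eq1b} at $X=2x-2$ to the box $[x-1,x]^2$, use the kernel lower bound \eqref{Kassump2} there, control the oscillation of $a$ on $[x-1,x]$ via the bound on $(a')_-$ from Lemma \ref{L.lowerasymp2}, and take logarithms. The only cosmetic difference is that the paper phrases the oscillation control as the inequality $a(y)\leq a(x)+\tfrac{C}{x}(1+(x-y))$ (a consequence of Lemma \ref{L.lowerasymp2}), whereas you invoke that lemma directly.
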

\begin{proof}
 From Lemma \ref{L.lowerasymp1} we have
\[
 a(y) \leq a( x) + \frac{C}{x} \big( 1+ (x-y)\big) \qquad \mbox{ for } y \in [x/2, x]
\]
and the same inequality for $z \in [ x/2, x]$.

The lower bound on the kernel \eqref{Kassump2} implies that for sufficiently large $x$ we have
$K \geq k_0$ in $[x-1,x]^2$. Then we obtain, using 
\eqref{eq1b}, for $X=2x-2$ 
that
\begin{equation}\label{doubling1}
 \begin{split}
  1& \geq \frac{C}{X^2} \int_{y \in [x-1,x]} dy \int_{z \in [x -1, x]} dz \,y e^{X a(X) - ya(y) - z a(z)}\\
& \geq \frac{C}{X} \int_{y \in [ x-1, x]} dy \int_{z \in [ x -1, x]} dz \,e^{Xa(X) - (y+z) a(x) - \frac{C}{x}(y+z)}\\
& \geq \frac{C}{X} \int_{y \in [x-1,x]} dy \int_{z \in [x -1,x]} dz\, e^{X (a(X) - a(x)) -C}\,.
 \end{split}
\end{equation}
As a consequence, we find
\[
 e^{X (a(X) - a(x)) -C} \leq CX
\]
and the statement of the Lemma follows.
\end{proof}

We define
\[
 M_{\delta}(x) = \max_{\delta x \leq y \leq x} a(y)\,.
\]
We obviously have
\[
 \limsup_{x \to \infty} a(x) = \limsup_{x \to \infty} M_{\delta}(x)\,.
\]
For the following we assume that for some $\theta \in (0,1)$
\begin{equation}\label{Kassump3}
 K(y,x-y) \geq k_0 \qquad \mbox{ for } y \in [\theta x,(1-\theta)x]\,. 
\end{equation}
Observe that this follows from \eqref{Kassump2} for $\theta \geq (1-\kappa)/2$. 

\begin{lemma}\label{L.limit2} (Flatness Lemma)
 Given $\delta>0$  there exist for all $\eps>0$ and $\eta>0$ numbers $R$ and $\sigma$ such that the following holds: if $a(x) \geq M_{\delta}(x)-\sigma$
with $x \geq R$, then $a(y) \geq M_{\delta}(x)-\eta$ for all $y \in ((\theta+\eps) x, (1-(\theta+\eps))x]$. 
\end{lemma}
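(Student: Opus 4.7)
Assume for contradiction that $a(y_0) < M_\delta(x) - \eta$ for some $y_0 \in \big((\theta+\eps)x, (1-\theta-\eps)x\big)$, and set $z_0 = x - y_0$, which also lies in the middle range. My plan is to extract from \eqref{eq1b} evaluated at $x$ a lower bound on $f(x)$ that will contradict the upper bound $f(x) \leq e^{-x(M_\delta(x) - \sigma)}$ coming directly from the hypothesis. The integrand will be restricted to a small two-dimensional region in which $K \geq k_0$ and where $f(y)f(z)$ can be controlled from below using the assumed smallness of $a(y_0)$.

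Concretely, I would integrate over the region $R_x = \{(y,z) : y_0 - 1 \leq y \leq y_0,\ x - y \leq z \leq x - y + 1\}$, which has area $1$ and is contained in the domain of integration in \eqref{eq1b}. For $x$ large enough that $x \geq (\theta+1)/\eps$, the ratio $y/(y+z)$ stays in $(\theta, 1-\theta)$ on $R_x$, so $K(y,z) \geq k_0$ by \eqref{Kassump3}. Next, Lemma \ref{L.lowerasymp2} yields $a(y) \leq a(y_0) + \int_y^{y_0}(a')_-(s)\,ds \leq a(y_0) + C/y_0$ for $y \in [y_0 - 1, y_0]$, and since $y \leq y_0$ and $a \geq 0$ this upgrades to $y a(y) \leq y_0 a(y_0) + C$, hence $f(y) \geq e^{-C} f(y_0)$. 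For $z \in [z_0, z_0+2]$, which lies in $[\delta x, x]$ in the natural regime $\delta \leq \theta + \eps$, the definition of $M_\delta(x)$ gives $a(z) \leq M_\delta(x)$, so $f(z) \geq e^{-zM_\delta(x)} \geq e^{-2\alpha_2} e^{-z_0 M_\delta(x)}$. Combining these estimates, and using $y \geq y_0/2 \asymp x$, yields
\[
 x^2 f(x) \geq C k_0 \, y_0 \, e^{-y_0 a(y_0) - z_0 M_\delta(x)}.
\]

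Writing the exponent as $y_0 a(y_0) + z_0 M_\delta(x) = x M_\delta(x) - y_0(M_\delta(x) - a(y_0)) \leq x M_\delta(x) - y_0 \eta$ and comparing with $f(x) \leq e^{-x M_\delta(x) + x \sigma}$, I arrive at
\[
 (\theta + \eps)\eta \leq \sigma + \frac{\log x + C}{x}.
\]
Choosing $\sigma < (\theta + \eps)\eta/2$ and $R$ large enough for the last term to be absorbed gives the desired contradiction. The main technical obstacle is that $a$ is only known to be $BV_\text{loc}$, and there is no quantitative upper bound on $(a')_+$, so one cannot directly control $f(y)$ from below for $y$ to the right of $y_0$. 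The crucial point is that Lemma \ref{L.lowerasymp2} controls $(a')_-$ and therefore allows one to lower bound $f(y)$ precisely to the \emph{left} of $y_0$; this is why the integration region is designed asymmetrically with $y \leq y_0$ and $z \geq x - y$.
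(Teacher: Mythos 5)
Your proof is correct and follows the same overall strategy as the paper: argue by contradiction, bound the collision integral in \eqref{eq1b} from below on a unit-area box near the point $(y_0,x-y_0)$ where the kernel lower bound \eqref{Kassump3} applies, and compare with the upper bound $f(x)\leq e^{-x(M_\delta(x)-\sigma)}$ to force $e^{c\eta x}\lesssim x$. The one genuine difference is how you obtain an \emph{interval} on which $f$ is bounded below, given only pointwise smallness of $a$ at $y_0$: you go directly to the \emph{left} of $y_0$, using Lemma \ref{L.lowerasymp2} to get $a(y)\leq a(y_0)+\int_y^{y_0}(a')_-\leq a(y_0)+C/y_0$ on $[y_0-1,y_0]$, and correspondingly let $z$ range over $[x-y,x-y+1]$ where only $a(z)\leq M_\delta(x)$ is needed. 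The paper instead invokes the pigeonhole Lemma \ref{L.lowerasymp1} to locate a point $x^*\in[z^*(1+\eps/4),z^*(1+3\eps/4)]$ with $a$ controlled on $[x^*-1,x^*]$ and then transports the smallness of $a(z^*)$ to $a(x^*)$ via the $(a')_-$ bound; this route incurs an extra error of order $\frac{C}{x^*}(x^*-z^*)\sim C\eps$ that the paper absorbs somewhat silently, whereas your asymmetric choice avoids it entirely. Your closing remark correctly identifies why the asymmetry is forced (no control on $(a')_+$), and your bookkeeping ($\sigma<(\theta+\eps)\eta/2$, $R$ large, $\delta\leq\theta+\eps$ so that $[z_0,z_0+2]\subset[\delta x,x]$) matches the paper's choice $\sigma=\eta\theta/2$. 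In short: same mechanism, slightly cleaner localization.
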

\begin{proof}
 By definition we have that $a(y) \leq M_{\delta}(x)$ for all $y \in [\delta x, x]$. Then \eqref{eq1b} implies that
\begin{equation}\label{flatness1}
  1\geq \frac{1}{x^2} \int_{\delta x}^x  \int_{x{-}y}^{\infty} K(y,z) y e^{xa(x)-yM_{\delta}(x) - z a(z)}\,dz\,dy\,.
\end{equation}
Assume that the statement of the Lemma is not true. Then, given $\eta>0$,  there exists $z^* \in [(\theta+\eps) x, 
(1-(\theta+\eps))x]$ such that $a(z^*) < M_{\delta}(x)-\eta$.
By Lemma \ref{L.lowerasymp1} there exists $x^*\in [z^*(1+\eps/4),(1+3\eps/4)z^*]=:I$ such that
\[
 a(z) \leq a(x^*) + \frac{C}{x^*} \leq a(z^*)+\frac{C}{x^*}(1+(z^*-x^*)) < 
 M_{\delta}(x)-\eta + \frac{C}{x^*}
 \]
  for all $z \in [x^*-1,x^*]\cap I$.

  Next, we notice that $[x^*-1,x^*] \subseteq [\theta x,(1-\theta)x]$ for sufficiently large $x$.
  This follows since
  \[
   x^* -1 \geq z^*(1+\eps/4) \geq(\theta+\eps)(1+\eps/4)x-1\geq\theta x
  \]
and
\[
 x^* \leq z^*(1+3\eps/4) \leq (1-(\theta+\eps))(1+3\eps/4)x \leq(1-\theta)x
\]
if $x$ is sufficiently large.
  
  In addition we have, if $z \geq x^*-1/2$ and if $x$ is sufficiently large 
  that $x-z+1 \leq \frac{1+\kappa}{1-\kappa}z$.
  
  Thus, we can estimate the right hand side of \eqref{flatness1}  further via
  
\begin{equation}\label{flatness2}
 \begin{split}
  1 & \geq \frac{c}{x} \int_{x^*-1/2}^{x^*}  \int_{  x-z}^{ x+1-z}  e^{x(M_{\delta}(x)-\sigma) - y M_{\delta}(x) 
  - z (M_{\delta}(x)-\eta)}\,dy\,dz \\
&\geq \frac{c}{x}\int_{x^*-1/2}^{x^*} e^{-x\sigma + \eta z}\int_{x-z}^{x+1-z} e^{M_{\delta}(x)(x-(y+z))}\,dy\,dz\\
& \geq \frac{c}{x} e^{-\alpha_2} e^{-x\sigma + \eta \theta x}\,.
 \end{split}
\end{equation}
Choosing $\sigma=\eta \theta/2$ we obtain $e^{\eta \theta x/2}\leq Cx$ which 
gives a contradiction for sufficiently large $x$.
\end{proof}

\begin{prop}\label{P.limit}
 If $\theta < 1/3$ or equivalently $\kappa >1/3$, the limit of $a(x)$ as $x \to \infty$ exists.
 \end{prop}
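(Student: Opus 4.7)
The plan is to show that $\liminf_{x\to\infty} a(x)\geq U:=\limsup_{x\to\infty} a(x)$; combined with the trivial reverse inequality this gives the desired limit. Both are finite by Theorem \ref{T1}. Since $\kappa>1/3$, the parameter $\theta=(1-\kappa)/2$ appearing in Lemma \ref{L.limit2} satisfies $\theta<1/3$, so I may fix $\eps>0$ so small that $\theta':=\theta+\eps<1/3$ and the ratio $\lambda:=(1-\theta')/\theta'$ is strictly larger than $2$. This strict inequality $\lambda>2$ is the structural fact that drives the whole proof.

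The first main step is a density claim: for every $\delta_0>0$ there exists $X_0$ such that every window $[x,\lambda x]$ with $x\geq X_0$ contains a point $y$ with $a(y)\geq U-\delta_0$. I would prove this by contradiction, iterating the Doubling Lemma. Write it as $a(2z-2)\leq a(z)+\phi(z)$ with $\phi(z)=\log(C(2z-1))/(2z-2)=O(\log z/z)$. If $a<U-\delta_0$ on $I_0=[X,\lambda X]$, then the change of variables $z\mapsto 2z-2$ gives $a<U-\delta_0+\phi(X)$ on $I_1=[2X-2,2\lambda X-2]$. The key point is that $\lambda>2$ makes $I_0\cup I_1$ a single interval (the right endpoint $\lambda X$ of $I_0$ exceeds the left endpoint $2X-2$ of $I_1$ with room), and the same gluing persists on iteration, so $\bigcup_{k\leq n}I_k\supset [X,2^n\lambda X-O(1)]$. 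The accumulated error $\sum_k \phi(2^k X)$ is summable and tends to $0$ as $X\to\infty$, so for $X_0$ large the iteration yields $a(z)<U-\delta_0/2$ for all $z\geq X_0$, contradicting $\limsup a=U$.

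The second step combines this density with the Flatness Lemma. Fix some $\delta\in(0,1)$ for the definition of $M_\delta$, then set a target $\eta=\delta_0$, and let $\sigma>0$ be the constant furnished by Lemma \ref{L.limit2}. Because $\limsup_{y\to\infty}M_\delta(y)\leq U$, for $y$ large enough any point with $a(y)\geq U-\delta_0$ automatically satisfies $a(y)\geq M_\delta(y)-\sigma$ once $\delta_0<\sigma/2$, so the Flatness Lemma applies and gives $a(z)\geq M_\delta(y)-\eta\geq U-\delta_0-\eta$ on $J_y:=(\theta' y,(1-\theta')y]$. Invoking the density step with some $\lambda'\in(2,\lambda)$ in place of $\lambda$ (valid by the same doubling argument, since $\lambda'>2$ still), I select a sequence $X_0\leq y_1<y_2<\cdots$ with $a(y_n)\geq U-\delta_0$ and $y_{n+1}\leq \lambda' y_n$; then $\theta' y_{n+1}\leq\theta'\lambda' y_n<(1-\theta')y_n$, so consecutive $J_{y_n}$ overlap strictly and $\bigcup_n J_{y_n}\supset (\theta' y_1,\infty)$. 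Hence $a(z)\geq U-2\delta_0$ for all sufficiently large $z$, so $\liminf a\geq U-2\delta_0$; letting $\delta_0\to 0$ finishes the proof.

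The main obstacle is the density step. Without it, the Flatness Lemma only yields lower bounds along an arbitrarily sparse sequence and fails to cover the whole tail. The Doubling Lemma is the only tool available to propagate the values of $a$ upward, and its natural geometry is multiplication by $2$; this is exactly why the hypothesis $\lambda>2$, equivalently $\kappa>1/3$, is imposed. If $\lambda$ were only $\geq 1$, the dyadic images $I_k$ would leave gaps that the Doubling Lemma cannot fill, and the whole covering scheme would collapse.
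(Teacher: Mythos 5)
Your argument is correct and rests on the same two pillars as the paper's proof --- the Doubling Lemma \ref{L.limit1} and the Flatness Lemma \ref{L.limit2} --- with the hypothesis $\kappa>1/3$ entering in exactly the same way, namely through the fact that the flatness interval $(\theta' x,(1-\theta')x]$ has multiplicative width $\lambda=(1-\theta')/\theta'>2$, matching the factor $2$ of the doubling step. The organization, however, is a mirror image of the paper's. The paper argues by contradiction: it uses doubling to propagate near-$\liminf$ values of $a$ along the dyadic sequence $x_n=2+\beta 2^n$ (Claim 1), locates points $\bar x_n$ where $a$ is within $\sigma$ of $M_\delta$ and near the $\limsup$ (Claim 2), and concludes because the resulting flatness interval, having ratio $>2$, must contain one of the dyadic low points. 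You instead run the Doubling Lemma in contrapositive to show that points with $a\geq U-\delta_0$ are multiplicatively dense (every window $[x,\lambda' x]$ with $\lambda'>2$ far enough out contains one), and then chain the overlapping flatness intervals around these points to cover the whole tail, yielding directly $\liminf a\geq U-2\delta_0$. Your version is a bit longer but more constructive: it produces an explicit lower bound for $a$ on the entire tail rather than a contradiction at a single point, and it makes transparent why the factor $2$ appears twice (gluing of doubled windows, overlap of flatness intervals). One small point to tighten: to guarantee simultaneously $y_{n+1}\leq\lambda' y_n$ and $y_n\to\infty$ you should extract the $y_n$ from geometrically spaced windows, e.g.\ $[\rho^n X_0,\lambda'\rho^n X_0]$ with $1<\rho<\lambda/\lambda'$, which gives $y_{n+1}/y_n\leq\lambda'\rho<\lambda$ and $y_n\geq\rho^n X_0$; as written, a strictly increasing sequence with $y_{n+1}\leq\lambda' y_n$ need not tend to infinity, in which case $\bigcup_n J_{y_n}$ would fail to cover the tail.
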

\begin{proof}
 We assume that
\[
 b^*:=\limsup_{x \to \infty} a(x) > a^*:=\liminf_{x \to \infty} a(x)\,
\]
and let $\eps=(b^*-a^*)/10$.

{\it Claim 1:} 

There exists $\beta>0$ such that if $x_n=2+\beta 2^n$, we have $a(x_n) \leq a^* + 2\eps$.

\medskip
Indeed, the doubling Lemma \ref{L.limit1} implies that we have $a(x_{n+1}) -a(x_n) \leq \frac{1}{\sqrt{x_n}}$. The definition of $a^*$ implies that there exists
$x_0$ such that $a(x_0) =a(2+\beta) < a^*+\eps$. 
Then $a(x_n)-a(x_0) = \sum_{i=1}^n a(x_i)-a(x_{i-1}) \leq \sum_{i}^n \frac{1}{\beta 2^i} \leq \frac{2}{\beta}$ and thus
 $a(x_n) \leq a^* + \eps + \frac{C}{\sqrt{x_0}} \leq a^* + 2\eps$ for sufficiently large $\beta$ which 
proves Claim 1.

\bigskip
{\it Claim 2:}

There exists $({\bar x}_n)$ such that $a({\bar x}_n) \geq M_{\delta}({\bar x}_n) - \sigma$ with $M_{\delta}(
{\bar x}_n) \geq b^* - \eps$.

\medskip
First notice that for sufficiently large $x$ we have $M_{\delta}(x) \leq b^*+ \frac{\sigma}{4}$.

Next, since $a$ is continuous 
there exists $x_n^*$ with $a(x^*_n) < b^*-\frac{\sigma}{2}$. We define ${\bar x}_n:= \inf \{ x>x^*_n\,:\, a({\bar x}_n) = b^*-\frac{\sigma}{2}\}$.
Then 
\[
a({\bar x}_n) = b^*-\frac{\sigma}{2} > M_{\delta}({\bar x}_n) - \frac{\sigma}{4}-\frac{\sigma}{2} > M_{\delta}({\bar x}_n)- \sigma\,.
\]
Since $M_{\delta}({\bar x}_n) \geq a({\bar x}_n)$ we have $M_{\delta}({\bar x}_n) \geq b^*- \frac{\sigma}{2} \geq b^*-\eps$. 

\bigskip
We can now apply the flatness Lemma \ref{L.limit2} to conclude that
\[
 a(y) \geq M_{\delta}({\bar x}_n) - \eta \geq M_{\delta}({\bar x}_n) - 2\eps \qquad \mbox{ for all } y \in 
 [(\theta+\eps) {\bar x}_n, (1-(\theta+\eps)){\bar x}_n]\,.
\]
If we can choose $m \in \N$ such that $x_m \in [(\theta+\eps) {\bar x}_n, (1-(\theta+\eps)){\bar x}_n]$  we obtain a contradiction.
This is possible if $(1-(\theta+\eps)) {\bar x}_n > 2(\theta +\eps){\bar x}_n$. Since $\eps>0$ is arbitrary, this 
is satisfied for $1-\theta >2\theta$, hence for $\theta <1/3$.
\end{proof}

\section{The prefactor }

\subsection{Existence of the prefactor} \label{S.preex}

The function  $u$ defined in \eqref{udef} satisfies the equation
\begin{equation}\label{uequation}
 x^2 u(x) = \int_0^x \int_{x{-}y}^{\infty} \, y K(y,z) u(y)u(z) e^{a^* (x-(y{+}z))}\,dz\,dy\,.
\end{equation}
As explained in the introduction we need to assume from now on that $K$ is uniformly bounded below, that is $K$ satisfies \eqref{Kassump2} with $\kappa=1$.

We start with a consequence of the proof of  Lemma \ref{L.ublargex} that gives that $u$ has infinite first moment.

\begin{lemma}\label{L.unonintegrable}
 The integral $\int_0^{\infty} x u(x)\,dx$ is not finite.
\end{lemma}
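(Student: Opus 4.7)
The plan is to argue by contradiction. Suppose that $N:=\int_0^\infty xu(x)\,dx<\infty$.

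First I integrate the equation \eqref{uequation} after dividing by $x$ and switch the order of integration. For fixed $(y,z)$ the range of $x$ is $[y,y+z]$, and the elementary bound
\[\int_y^{y+z}\frac{e^{a^*(x-y-z)}}{x}\,dx\geq\frac{1-e^{-a^*z}}{a^*(y+z)},\]
combined with $K(y,z)\geq k_0$ (which holds uniformly because $\kappa=1$), yields
\[N\geq\frac{k_0}{a^*}\int_0^\infty\!\!\int_0^\infty\frac{y(1-e^{-a^*z})}{y+z}u(y)u(z)\,dy\,dz.\]
Restricting to the region $\{(y,z):1\leq z\leq y\}$, where $y/(y+z)\geq 1/2$ and $1-e^{-a^*z}\geq 1-e^{-a^*}$, and using
\[\int_1^\infty u(z)\int_z^\infty u(y)\,dy\,dz=\tfrac{1}{2}\Bigl(\int_1^\infty u\Bigr)^2,\]
one obtains $N\geq c\bigl(\int_1^\infty u\bigr)^2$, whence $\int_1^\infty u(x)\,dx<\infty$ under the contradictory hypothesis.

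Second, from $N<\infty$ and Chebyshev's inequality, for every $0<a<b<\infty$,
\[\int_a^b u(R\xi)\,d\xi=\frac{1}{R}\int_{aR}^{bR}u(x)\,dx\leq\frac{N}{aR^2}\xrightarrow[R\to\infty]{}0,\]
so any weak-$\ast$ subsequential limit of the family $u_R(\xi):=u(R\xi)$ on $(0,\infty)$ must be the zero measure.

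Third, I contradict this by producing a nontrivial subsequential limit. Carrying out exactly the rescaling of Section~\ref{S.preex} directly in \eqref{uequation}, one sees that as $R\to\infty$ the exponential $e^{a^*R(\xi-\eta-\zeta)}$ concentrates near $\eta+\zeta=\xi$ on scale $1/R$; combined with the polynomial pointwise upper bound $u(x)\leq Cx^C$ obtained from the Doubling Lemma~\ref{L.limit1} (using $a(x)\geq a^*-C\log x/x$ which follows by iterating the doubling estimate and Proposition~\ref{P.limit}), the rescaled identity passes to the limit so that every weak subsequential limit $\mu$ satisfies \eqref{muequation}. To ensure $\mu\not\equiv 0$, I restrict the integration in \eqref{uequation} to $y,z\in[x/2,x/2+\tfrac12]$, where both $K\geq k_0$ and the exponential are uniformly bounded below, to derive the pointwise lower bound $x^2u(x)\geq c\,x\,I(x/2)^2$ with $I(R):=\int_R^{R+1/2}u$, and then use the doubling estimate $u(2x-2)\geq 1/(C(2x-2))$ (valid whenever $a(x)$ is within $O(1/x)$ of $a^*$, which occurs along a cofinal sequence because $a(x)\to a^*$) together with the BV regularity of $u$ from Lemma~\ref{L.bv} to seed the recursion $I(R)\geq cR^{-1}I(R/2)^2$. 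Iterating forces $I$ to have a strictly positive liminf, contradicting the conclusion of the second step.

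The main obstacle I anticipate is the last step: the recursion $I(R)\geq cR^{-1}I(R/2)^2$ is not self-improving---it amplifies smallness as much as largeness---so to exploit it one must carefully convert the pointwise polynomial lower bound furnished by the doubling lemma into an integral seed $I(R_0)$ of the right size, using the BV regularity to propagate the pointwise bound to a neighborhood of width of order one and to guarantee that the iteration can be started inside the range of $R_0$ compatible with the finiteness hypothesis $N<\infty$.
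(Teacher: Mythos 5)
Your argument has a genuine gap in the third step, which is where all the content lies (the first step is superfluous, since $\int_1^\infty u\,dx\leq\int_1^\infty xu\,dx=N$ directly, and the second step is a correct but easy Chebyshev bound). The problem is the ``seed'' for your recursion. The Doubling Lemma \ref{L.limit1} controls \emph{increases} of $a$: iterating it along the dyadic sequence starting at $x$ gives $a^*\leq a(x)+C\log x/x$, i.e. $a(x)\geq a^*-C\log x/x$, which is a polynomial \emph{upper} bound $u(x)\leq Cx^C$. It gives no lower bound on $u$. For the lower bound $u(X)\geq 1/(CX)$ you need $a(x)\leq a^*+O(1/x)$ at the starting point, and your justification --- ``which occurs along a cofinal sequence because $a(x)\to a^*$'' --- is false: convergence of $a(x)$ to $a^*$ carries no rate, and e.g. $a(x)=a^*+x^{-1/2}$ converges while $x(a(x)-a^*)\to\infty$, i.e. $u(x)\to 0$ faster than any polynomial along the whole sequence. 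Moreover, even granting a seed, the recursion $I(R)\geq cR^{-1}I(R/2)^2$ obtained by restricting to $y,z\in[x/2,x/2+\tfrac12]$ strictly degrades lower bounds (a constant bound $I\geq A$ only yields $I(2R)\geq cA^2/R\to 0$), because you have discarded the $\sim R$ pairs $(y,x-y)$ that make the analogous recursion in Lemma \ref{L.lowerbound} close. What you are really trying to prove in this step is the nontriviality statement of Lemma \ref{L.munontrivial} ($\int_1^{BR}u\geq cR$), and in the paper that lemma is proved by a Laplace-transform bootstrap whose contradiction is derived precisely \emph{from} Lemma \ref{L.unonintegrable}; your route would have to supply an independent proof of it, which the sketch does not.

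The paper's own proof is far shorter and goes in the opposite direction: on the domain of integration in \eqref{uequation} one has $x\leq y+z$, so $e^{a^*(x-(y+z))}\leq 1$ and $u$ satisfies the same integral \emph{inequality} as $f$ does in \eqref{eq1b}. If $\int_0^\infty xu\,dx<\infty$, then all the moment inputs of Lemma \ref{L.ublargex} hold for $u$ (the small-$x$ moments are inherited from $f$), so that proof applies verbatim and yields $u(x)\leq Ce^{-ax}$ for some $a>0$. But then $f(x)\leq Ce^{-(a^*+a)x}$, forcing $\liminf_{x\to\infty}a(x)\geq a^*+a>a^*$, contradicting $a^*=\lim_{x\to\infty}a(x)$. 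You should adopt this argument rather than attempt to establish nontriviality of the rescaled limits first.
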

\begin{proof}
 This is just the observation that \eqref{uequation} implies
\[
 x^2 u(x) \leq \int_0^x \int_{x{-}y}^{\infty} \, y K(y,z) u(y)u(z)\,dz\,dy\,.
\]
If we assume that $\int_0^{\infty} x u(x)\,dx< \infty$, we can apply the proof of Lemma \ref{L.ublargex} to $u(x)$ and obtain exponential decay of $u$. But this contradicts
the definition of $a^*$. 
\end{proof}

\begin{lemma}\label{L.uupperbound}
For any $A\geq 1$ there exists a constant $C_A>0$ such that 
\[
 \frac{1}{R} \int_1^{AR} u(x)\,dx \leq C_A\, \qquad \mbox{ for } R \geq \frac{2}{a^*}\,. 
\]
\end{lemma}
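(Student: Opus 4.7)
My plan is to derive an integral identity from the self-similar equation \eqref{eq1} by exploiting the exponential prefactor $e^{a^* x}$. Multiplying \eqref{eq1} by $e^{a^* x}$ and integrating over $x \in [R, AR]$ (with $R \geq 2/a^*$), I perform integration by parts on the transport term $-xf'(x)e^{a^* x}$, which produces boundary values $Ru(R) - AR\, u(AR)$ together with $-\int u\,dx + a^* \int xu\,dx$. The right-hand side becomes $\tfrac{1}{2} I_c - I_B$, where
\[
I_c := \iint_{\substack{w,y \geq 0\\ w+y \in [R,AR]}} K(w,y)\, u(w)\, u(y)\,dw\,dy, \qquad I_B := \int_R^{AR} u(x) B(x)\,dx,
\]
with $B(x) := \int_0^\infty K(x,y) f(y)\,dy$. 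After the change of variable $w = x - y$ in the convolution, this leads to the identity
\[
a^* \int_R^{AR} x\, u(x)\, dx = \int_R^{AR} u(x)\, dx - R u(R) + AR\, u(AR) + \tfrac{1}{2} I_c - I_B.
\]

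Using $\int_R^{AR} x u\,dx \geq R \int_R^{AR} u\,dx$ and the hypothesis $R \geq 2/a^*$ (so that $a^* R - 1 \geq a^* R/2$), and dropping the non-positive contributions $-Ru(R)$ and $-I_B$, I obtain
\[
\int_R^{AR} u(x)\, dx \;\leq\; \frac{2}{a^* R} \Bigl[AR\, u(AR) + \tfrac{1}{2} I_c \Bigr].
\]
The heart of the proof is then to bound $I_c \leq C_A R^2$ and to handle the boundary term $AR\, u(AR)$. For $I_c$, I split the $(w,y)$ domain by the symmetry $w \leq y$, apply $K(w,y) \leq 2K_0 (y/w)^\alpha$, and treat separately the regions where $w$ is small (using Lemma \ref{L.negativemoment} for the weighted moment $\int_0^1 w^{1-\alpha}u(w)\,dw \leq C$, exploiting $u \sim f$ on bounded sets via the factor $e^{a^* w} \leq e^{a^*}$) and where $w,y$ are both of order $R$ (via a dyadic decomposition based on Lemma \ref{L.ubsmallxnew} translated from $xf$ to an estimate on $u$).

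The main obstacle is the pointwise boundary term $AR\, u(AR)$: our a priori control on $u$ via the convergence $a(x) \to a^*$ together with Lemmas \ref{L.lowerasymp2} and \ref{L.limit1} only yields sub-exponential growth of $u$, not $u = O(1)$, so a direct pointwise bound is unavailable. To circumvent this, I would apply the identity with a varying upper endpoint $L \in [AR, (A+1)R]$: for each such $L$ the inequality yields $\int_R^L u \leq \frac{2}{a^* R}[L\, u(L) + \tfrac{1}{2}I_c(R,L)]$, and averaging in $L$ converts the boundary term into $\frac{1}{R}\int_{AR}^{(A+1)R} L\, u(L)\,dL \leq (A{+}1)\int_{AR}^{(A+1)R} u(L)\,dL$. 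This self-refers to a quantity of the same form but on an interval of length $R$, which can then be absorbed by bootstrapping on $A$ (proving the statement for $A+1$ assuming it for $A$), starting from the trivial base case where $AR$ is of order $1/a^*$. The $BV_{loc}$ regularity of $u$ (Lemma \ref{L.bv}) ensures the boundary values are meaningful and the averaging is valid.
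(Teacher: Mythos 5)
Your approach has two genuine gaps, and the second one is fatal to the strategy as described. First, the decomposition of the right-hand side into $\tfrac12 I_c-I_B$ uses the strong form \eqref{eq1}, whose gain term lacks the factor $y$ that the flux form \eqref{eq1b} carries. Concretely, for $w\le y$ one has $K(w,y)u(w)\lesssim (y/w)^{\alpha}u(w)$, so bounding the small-$w$ part of $I_c$ requires $\int_0^1 w^{-\alpha}u(w)\,dw<\infty$, not the moment $\int_0^1 w^{1-\alpha}u(w)\,dw\le C$ from Lemma \ref{L.negativemoment} that you invoke; the former is not known (the dyadic estimates only give $\int_\delta^1 w^{-\alpha}f(w)\,dw\le C\delta^{-\alpha}$), so $I_c$ and $I_B$ may both be infinite and cannot be separated and estimated individually. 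This can be repaired by integrating \eqref{eq1b}$/x^2$ times $e^{a^*x}$ directly and using Fubini, which keeps the factor $y$ and an inner integral of length $\le z$.

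The unfixable part is the boundary term. After averaging the endpoint over $L\in[AR,(A+1)R]$ you arrive at an inequality of the form $\int_R^{AR}u\le \frac{C(A)}{R}\int_{AR}^{(A+1)R}u + C_AR$, which controls the \emph{inner} mass by the \emph{outer} mass. An induction on $A$ in which you assume the bound for $A$ and prove it for $A+1$ cannot close: the inductive hypothesis bounds $\int_1^{AR}u$, whereas what you need to control is precisely the new piece $\int_{AR}^{(A+1)R}u$ sitting on the right-hand side; and there is no valid base case, since already $A=1$ with $R\to\infty$ is the full content of the lemma. Note also that your argument never uses the uniform lower bound $K\ge k_0$ (the standing assumption $\kappa=1$ in this section), which is the essential ingredient in the paper's proof: there one sets $U(q)=\int_1^\infty e^{-qx}u(x)\,dx$, uses $K\ge k_0$ on the gain term to derive the Riccati inequality $-\partial_qU\ge cU^2$ for $q\le a^*/2$, and concludes $U(q)\le C/q$ because otherwise $1/U$ would become negative at some $q^*>0$, contradicting the finiteness of $U$ on $(0,\infty)$ (which is where the sub-exponential growth of $u$ enters). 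The Laplace transform replaces your sharp cutoff by a smooth weight and thereby eliminates the boundary terms entirely; some mechanism of this kind, exploiting the lower bound on $K$ and the finiteness of $U$, is needed and is absent from your proposal.
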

\begin{proof}
 We denote $U(q)=\int_1^{\infty}  e^{-qx} u(x)\,dx$ which is well-defined for all $q>0$. Then \eqref{uequation} implies 
\begin{equation}\label{utransformequation}
-\partial_q U(q)=\int_0^{\infty} \int_0^{\infty} K(y,z) y u(y) u(z) e^{-a^*(y+z)}\Big\{ \int_{\max(1,y)}^{\max(1,y+z)} \frac{1}{x} e^{(a^*-q)x}\,dx \Big\}\,dy\,dz\,.
\end{equation}
Symmetrizing and using the uniform lower bound on $K$ we find for $q \leq a^*/2$ that
\begin{equation}\label{uprimestimate}
 \begin{split}
  -\partial_q U(q) & \geq \int_1^{\infty} \int_1^{\infty}K(y,z) u(y) u(z) e^{-a^*(y+z)}  \frac{y}{y+z} \frac{1}{a^*{-}q} \Big( e^{(a^*-q)(y+z)} - e^{(a^*-q)y}\Big)\,dy\,dz\\
& \geq c \int_1^{\infty} \int_1^{\infty} u(y) u(z) e^{-q(y+z)} \Big( 1 - \frac{y}{y+z} e^{-(a^*-q)z} - \frac{z}{y+z} e^{-(a^*-q)y}\Big)\,dy\,dz\\
& \geq c\, U^2(q)\,,
\end{split}
\end{equation}
where the last inequality follows since $  1 - \frac{y}{y+z} e^{-(a^*-q)z} - \frac{z}{y+z} e^{-(a^*-q)y }\geq c>0$ for
$y,z \geq 1$ and $q \in [0,a^*/2]$. Integrating  inequality \eqref{uprimestimate} we find
\[
 \frac{1}{U(q)} \leq \frac{1}{U(q_0)} + c(q-q_0) \qquad \mbox{ for } 0<q < q_0 \leq \frac{a^*}{2}\,.
\]
If  $1/U(q_0) - cq_0 <0$ it follows that there exists $0<q^*\leq q_0$ such that $1/U(q^*) \leq 0$ which gives a contradiction to the fact that $U$ is well-defined for all $q>0$.
Since $q_0\leq \frac{a^*}{2}$ was arbitrary it follows that $U(q) \leq C/q$ for all $q\leq a^*/2$ and thus
\[
 \int_1^{AR} e^{-qx} u(x)\,dx \leq \frac{C}{q} \qquad \mbox{ for all } q \in \Big (0,\frac{a^*}{2}\Big]\,.
\]
Choosing $q=\frac{1}{R}$ the statement of the Lemma follows.
\end{proof}

\begin{corollary}\label{C.urlimit}
 Let $u_R(x):=u(Rx)$. Then there exists a sequence $R_j$ with $R_j \to \infty$ and a nonnegative measure $\mu \in {\cal M}^+(0,\infty)$ such that
$u_{R_j} \wto \mu$ on any compact subset of $(0,\infty)$. Furthermore we have
\begin{equation}\label{muapriori1}
 \int_{(0,\infty)} e^{-qy}\mu(y)\,dy \leq \frac{C}{q} \qquad \mbox{ for all } q>0
\end{equation}
and consequently 
\begin{equation}\label{muapriori1b}
 \int_{(0,x]}\mu(y)\,dy \leq Cx \qquad \mbox{ for all } x>0\,.
\end{equation}
\end{corollary}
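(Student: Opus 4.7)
The plan is to deduce compactness from the uniform bound of Lemma \ref{L.uupperbound}, and then transport the Laplace transform estimate obtained in the proof of that lemma through the weak limit.

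First I would verify that the family $\{u_R\,dx\}$ has uniformly bounded mass on every compact subset of $(0,\infty)$. For $[a,b]\subset(0,\infty)$ and $R$ large enough that $Ra\geq 1$, the change of variable $y=Rx$ together with Lemma \ref{L.uupperbound} gives
\begin{equation*}
\int_a^b u_R(x)\,dx \;=\; \frac{1}{R}\int_{Ra}^{Rb} u(y)\,dy \;\leq\; \frac{1}{Ra}\int_{1}^{R(b/a)\cdot a}u(y)\,dy\cdot a \;\leq\; C_{b/a}\,b.
\end{equation*}
Hence the measures $u_R(x)\,dx$ lie in a weak-$*$ bounded subset of the Radon measures on every $[a,b]\subset(0,\infty)$. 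Applying Banach--Alaoglu on a countable exhaustion $[1/n,n]$ of $(0,\infty)$ together with a diagonal extraction, I would produce a subsequence $R_j\to\infty$ and a nonnegative Radon measure $\mu$ on $(0,\infty)$ with $u_{R_j}\wto \mu$ on every compact subset of $(0,\infty)$.

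For \eqref{muapriori1} I would use the Laplace transform $U(q)=\int_1^\infty e^{-qx}u(x)\,dx$ already introduced in Lemma \ref{L.uupperbound}, where it was shown that $U(q)\leq C/q$ for all $q\in(0,a^*/2]$. A rescaling yields, for any $q>0$ and $R\geq 2q/a^*$,
\begin{equation*}
\int_0^\infty e^{-qy}u_R(y)\,dy \;=\; \frac{1}{R}\int_0^\infty e^{-(q/R)x}u(x)\,dx \;\leq\; \frac{1}{R}\!\left[\int_0^1 u(x)\,dx + U(q/R)\right] \;\leq\; \frac{C'}{q},
\end{equation*}
uniformly in large $R$. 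For any $0<a<b<\infty$ the test function $y\mapsto e^{-qy}\chi_{[a,b]}(y)$ can be squeezed between continuous compactly supported functions, so weak convergence on compact subsets gives
\begin{equation*}
\int_a^b e^{-qy}\,d\mu(y) \;\leq\; \liminf_{j\to\infty}\int_a^b e^{-qy}u_{R_j}(y)\,dy \;\leq\; \frac{C'}{q},
\end{equation*}
and monotone convergence as $a\downarrow 0$, $b\uparrow\infty$ yields \eqref{muapriori1}. The bound \eqref{muapriori1b} is then immediate from the pointwise inequality $\chi_{(0,x]}(y)\leq e\cdot e^{-y/x}$ applied under $\mu$, using \eqref{muapriori1} with $q=1/x$.

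Nontriviality of $\mu$ is not asserted in this corollary (it is stated in Theorem \ref{T3}), so I would postpone that to the later argument that combines \eqref{muapriori1b} with Lemma \ref{L.unonintegrable}. The main technical subtlety I anticipate is item two above: justifying that the weak-$*$ limit along compact subsets can be identified with a single Radon measure $\mu$ on $(0,\infty)$ (rather than merely a collection of local limits) and that the Laplace transform estimate, which is a global statement, survives when we only have local weak convergence. Both are standard but worth stating carefully, and the approximation of $e^{-qy}\chi_{[a,b]}$ from below by compactly supported continuous functions is the key device that handles the second point.
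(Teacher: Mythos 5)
Your approach is the intended one: the paper states this corollary without proof, as an immediate consequence of Lemma \ref{L.uupperbound} (uniform local mass bounds giving weak-$*$ compactness, and the bound $U(q)\leq C/q$ passing to the limit), and your write-up supplies exactly the details that were left implicit. One caveat: in your rescaled Laplace-transform display you split off the term $\frac{1}{R}\int_0^1 u(x)\,dx$, but this integral need not be finite --- the paper only controls $\int_0^1 x^{1-\gamma}f(x)\,dx$ for $\gamma<1$ (Lemma \ref{L.negativemoment}) and explicitly remarks in Section \ref{S.bv} that $\int_0^{a/2}f(\eta)\,d\eta$ may diverge, and $u$ is comparable to $f$ near the origin. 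The fix is immediate and does not change your argument: since you only ever test against $e^{-qy}\chi_{[a,b]}$ with $a>0$, you have $\int_a^b e^{-qy}u_R(y)\,dy=\frac{1}{R}\int_{Ra}^{Rb}e^{-qx/R}u(x)\,dx\leq\frac{1}{R}U(q/R)\leq C/q$ once $Ra\geq 1$ and $q/R\leq a^*/2$, so the singular region near $0$ never enters. With that correction the monotone-convergence passage to \eqref{muapriori1} and the deduction of \eqref{muapriori1b} via $\chi_{(0,x]}(y)\leq e\,e^{-y/x}$ are fine.
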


\bigskip
\begin{lemma}\label{L.munontrivial}
 There exists a constant $C>0$ such that $\mu$ satisfies
\begin{equation}\label{muapriori1c}
 C x \leq \int_{(0,x]}\mu(x)\,dx  \qquad \mbox{ for all } x >0\,.
\end{equation}
\end{lemma}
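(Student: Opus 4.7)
My plan is to prove $m(x) := \int_{(0,x]}\mu(dy) \geq Cx$ for all $x>0$ by combining the distributional equation \eqref{muequation} with the non-integrability of $u$ (Lemma \ref{L.unonintegrable}) and the scale invariance inherited from the homogeneity of $K$. Testing the weak form of \eqref{muequation} against $\phi=\chi_{[0,X]}$ and using $K\geq k_0$ (which holds pointwise since $\kappa=1$), I would first derive the key recursion
\[
 Xm(X) \;\geq\; \int_0^X x\,\mu(dx) \;=\; \frac{1}{2a^*}\!\int\!\!\int_{y+z\leq X}\!K(y,z)\mu(dy)\mu(dz) \;\geq\; \frac{k_0}{2a^*}\,m(X/2)^2,
\]
equivalently, for $\psi(X):=m(X)/X$,
\[
 \psi(X) \;\geq\; \frac{k_0}{8a^*}\,\psi(X/2)^2.\tag{$*$}
\]

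Next I would establish nontriviality of $\mu$, which is the heart of the argument. The idea is that if $\mu\equiv 0$, then $u_{R_j}\rightharpoonup 0$ vaguely on $(0,\infty)$, so rescaling gives $\int_0^{R_j}xu(x)\,dx = R_j^2\int_0^1 s\,u_{R_j}(s)\,ds = o(R_j^2)$. To contradict this, I would combine Lemma \ref{L.unonintegrable} ($\int xu=\infty$) with the subexponential behaviour $u(x)\geq e^{-\varepsilon x}$ for $x\geq x_\varepsilon$ (a consequence of $a(x)\to a^*$ from Proposition \ref{P.limit}), and apply this to the rescaled version of \eqref{uequation} under the uniform lower bound $K\geq k_0$. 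Specifically, restricting the double integral in \eqref{uequation} to the diagonal region $y,z\in[X/4,X/2]$ yields
\[
 \int_0^X x^2 u(x)\,dx \;\gtrsim\; X\Big(\!\int_{X/4}^{X/2}\!\!u\Big)^2,
\]
which combined with the non-integrability and subexponential bounds forces $\int_0^R xu\,dx\gtrsim R^2$ along a subsequence, hence $m(1)>0$.

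Once nontriviality is secured, I would close the argument using $(*)$ together with the scale invariance of \eqref{muequation}: since $K$ is homogeneous of degree zero, $\mu_\lambda(x):=\mu(\lambda x)$ is again a solution with the same upper bound \eqref{muapriori1b}, and every subsequential weak limit of $u_{R_j\cdot\lambda}$ is such a $\mu_\lambda$. Combining $(*)$ with the upper bound $\psi\leq C$ pins down $\psi$ near the fixed value $8a^*/k_0$ once it is bounded below at some scale, and the scale-invariance lets us transfer a lower bound at one scale to every scale $x>0$.

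\textbf{Main obstacle.} The crucial and delicate step is verifying the quadratic lower bound $\int_0^R xu(x)\,dx\gtrsim R^2$ along a subsequence, equivalently the nontriviality $m(1)>0$ of the rescaled limit. Lemma \ref{L.unonintegrable} only supplies $\int xu=\infty$, which a priori is compatible with much slower than quadratic growth of $\int_0^R xu$; extracting the quadratic rate requires carefully coupling the lower bound $K\geq k_0$ with the quantitative rate of convergence $a(x)\to a^*$ provided by Lemmas \ref{L.lowerasymp1}--\ref{L.limit1}, to show that if $\mu$ were trivial the equation \eqref{uequation} would force $u$ to decay faster than any $e^{-\varepsilon x}$, contradicting the definition of $a^*$.
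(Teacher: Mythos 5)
Your proposal does not close; the two steps you lean on most heavily are precisely the ones that fail. First, the nontriviality step. You correctly identify that the heart of the matter is upgrading $\int_0^\infty xu\,dx=\infty$ to a quadratic lower bound $\int_0^R xu\,dx\gtrsim R^2$, but the ingredients you list do not deliver it. Writing $I_n=\int_{2^n}^{2^{n+1}}u\,dx$ and $\rho_n=2^{-n}I_n$, your diagonal estimate gives a recursion of the form $\rho_{n+1}\geq c\,\rho_n^2$, which collapses doubly exponentially to zero unless $\rho_n$ already exceeds the fixed point $1/c$; and the subexponential bound $u(x)\geq e^{-\eps x}$ for $x\geq x_\eps$ only yields $\rho_n\geq e^{-\eps 2^{n+1}}$ for $n\geq n_\eps$, which is perfectly compatible with $\rho_n\to 0$ (consider $\rho_n=e^{-2^{n/2}}$). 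The real danger is that the linear contributions to \eqref{uequation} (one of $y,z$ of order one) could sustain a slowly decaying $\rho_n$ even when the quadratic self-interaction is negligible, and nothing in your sketch excludes this. The paper's proof is built exactly to rule it out: it works with $U(q)=\int_1^\infty e^{-qx}u\,dx$ and the auxiliary transform $V(q)=\int_1^\infty x^{-\alpha}e^{-qx}u\,dx$, derives the differential inequality $-\partial_q U\leq K_1\big(UV^{1/(1-\alpha)}+U+1\big)$ together with $V(q)\leq K_2 q^{\alpha-1}$, and then runs a two-function induction over the dyadic scales $q^*/2^n$ showing that if $U(q^*)\leq \eps/q^*$ ever holds with $\eps$ small, then $U$ and $-\partial_q U$ stay uniformly bounded as $q\to 0$, contradicting Lemma \ref{L.unonintegrable}. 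Your sketch contains no substitute for this mechanism.

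Second, even granting $m(x_0)>0$ at one scale, your recursion $(*)$ and the scale invariance do not yield $m(x)\geq Cx$ for all $x>0$. The recursion $\psi(X)\geq\lambda\psi(X/2)^2$ propagates lower bounds only upward in scale, and with quadratically degrading constants unless the starting constant already exceeds the fixed point $1/\lambda=8a^*/k_0$, which you have not established; combining $(*)$ with the upper bound $\psi\leq C$ gives $\psi(X/2)\leq\sqrt{C/\lambda}$, i.e.\ yet another upper bound, not the claimed pinning near the fixed value. For small $x$ the recursion gives nothing at all, and the observation that $\mu_\lambda(x)=\mu(\lambda x)$ again solves \eqref{muequation} does not transfer a lower bound across scales: distinct solutions of the same equation need not share a common lower bound, and what is needed is a bound uniform in $\lambda$ for the one fixed limit measure $\mu$. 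In the paper the uniform statement for all $x>0$ comes directly from $U(q)\geq C/q$ for all $q\in(0,a^*/2]$ together with a tail estimate $\int_{BR}^\infty e^{-x/R}u\,dx\leq R/2$, and then passes to the limit $u_{R_j}\wto\mu$.
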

\begin{proof}
 Our goal is to derive a uniform lower bound on averages of $u$ via uniform lower bounds on $U$. 
To that aim we define $V(q):= \int_1^{\infty} x^{-\alpha} u(x) e^{-qx}\,dx$.

{\it Step 1:} We claim for $U$ as in Lemma \ref{L.uupperbound}, that 
\begin{equation}\label{munontrivial1}
 -\partial_q U(q) \leq K_1 \Big( U V^{\frac{1}{1{-}\alpha}} + U + 1\Big)\qquad \mbox{ for } q \in\Big(0, \frac{a^*}{2}\Big]\,.
\end{equation}
We recall \eqref{utransformequation} from Lemma \ref{L.uupperbound} and split the integral on the right hand side into the following parts:
\[
\begin{split}
\int_1^{\infty} \,dy \int_1^{\infty} \,dz \dots +
 &\int_0^1\,dy\int_1^{\infty}\,dz\dots + \int_1^{\infty} \,dy \int_0^1 \,dz \dots + \int_0^1\,dy\int_0^1\,dz\dots \\
& =: (I)+(II)+(III)+(IV)\,.
\end{split}
\]
Using H\"olders inequality, we find
\begin{equation}\label{munontrivial2}
\begin{split}
|(I)|&\leq C \int_1^{\infty} \int_1^{\infty}\Big ( \Big(\frac{y}{z}\Big)^{\alpha} + \Big(\frac{z}{y}\Big)^{\alpha}\Big) u(y)u(z)  e^{-q(y+z)} \,dy\,dz \\
&\leq C V(q)  \int_1^{\infty} z^{\alpha} u(z) e^{-qz}\,dz \\
& \leq |\partial_q U|^{\alpha} |U|^{1{-}\alpha} |V| \leq \eps |\partial_q U| + C_{\eps} |U| |V|^{\frac{1}{1{-}\alpha}}\,.
\end{split}
\end{equation}
Next, using \eqref{ubsmallx}, we have
\begin{equation}\label{munontrivial3}
\begin{split}
  |(II)| &\leq C \int_0^1 \int_1^{\infty}  y^{1{-}\alpha} u(y)  z^{\alpha} u(z)e^{-a^*(y+z)}\int_1^{y+z} \frac{e^{(a^*-q)x}}{x}\,dx \,dz\,dy\\
&\leq C \int_1^{\infty} z^{\alpha}u(z) \frac{e^{-qz}}{z}\,dz \leq C U\,.
\end{split}
\end{equation}
Similarly we estimate
\begin{equation}\label{munontrivial4}
 \begin{split}
  |(III)| & \leq \int_1^{\infty} \int_0^1 y^{1+\alpha} z^{-\alpha} u(y)u(z) e^{-a^*(y+z)}\int_y^{y+z} \frac{e^{(a^*-q)x}}{x}\,dx \,dz\,dy\\
& \leq C \int_1^{\infty} \int_0^1 y^{1{+}\alpha} z^{-\alpha} u(y)u(z) e^{-a^*(y+z)} \frac{z}{y} e^{(a^*-q)y}\,dz\,dy\\
& \leq C \int_1^{\infty} y^{\alpha} u(y) e^{-qy}\,dy\\
& \leq \eps |\partial_q U(q)| + C_{\eps} |U(q)|\,.
\end{split}
\end{equation}
Finally, we have
\begin{equation}\label{munontrivial5}
 \begin{split}
  |(IV)|& \leq C \int_0^1 \int_0^1 u(y) u(z) y^{1-\alpha} z^{-\alpha} \Big\{\int_1^{\max(1,y+z)} \frac{e^{(a^*-q)x}}{x}\,dx \Big\}e^{-a^*(y+z)} \,dy\,dz \\
& \leq C \int_0^1 \int_0^1 u(y) u(z) y^{1-\alpha} z^{1-\alpha}  \,dy\,dz \leq C
 \end{split}
\end{equation}
and \eqref{munontrivial1} follows from \eqref{munontrivial2}-\eqref{munontrivial5}, noticing that $|\partial_q U(q)|=-\partial_q U(q)$.

\medskip
{\it Step 2:}
There exists $K_2>0$ such that 
\begin{equation}\label{vestimate}
 V(q) \leq \frac{K_2}{q^{1-\alpha}}\qquad \mbox{ for all } q \in \Big(0,\frac{a^*}{2}\Big]\,.
\end{equation}
Indeed, the definition of $V$ implies that for $q>\hat q>0$ 
\begin{equation}\label{vestimate1}
 \begin{split}
  |V(q)-V(\hat q)| & \leq \Big| \int_1^{\infty} y^{-\alpha} u(y) \Big( e^{-qy} - e^{-\hat q y}\Big)\,dy \Big|\\
& \leq |q-\hat q|^{\alpha} \Big| \int_1^{\infty} ((q-\hat q)y)^{-\alpha} \Big( 1 - e^{-y(q-\hat q)}\Big) e^{-\hat q y} u(y)\,dy\\
& \leq K_3 |q-\hat q|^{\alpha} U(\hat q)\,.
 \end{split}
\end{equation}
Recall that we have proved in Lemma \ref{L.uupperbound} that $U(q) \leq C/q$ for $q \in (0,a^*/2]$. 
Thus, we obtain for $\hat q=2^{-(n+1)}$ and $q=2^{-n}$ that
\[
  V\big(2^{-(n+1)}\big) \leq V(2^{-n}) + C 2^n 2^{-(n+1)\alpha} 
\]
and iterating this formula we find
\[
 V\big(2^{-(n+1)} \big)\leq C \Big ( 1 + \sum_{i=1}^n 2^{i(1-\alpha)}\Big) \leq C 2^{(n+1)(1-\alpha)}
\]
and this proves \eqref{vestimate}.

\medskip
{\it Step 3:}
We claim that there exists $C>0$ such that 
\begin{equation}\label{munontrivial6}
 U(q) \geq \frac{C}{q} \qquad \mbox{ for all } q \in \Big(0,\frac{a^*}{2}\Big]\,.
\end{equation}
Suppose that \eqref{munontrivial6} is not true. Then, given an arbitrarily small $\eps>0$, there exists $q^*\in \Big(0,\frac{a^*}{2}\Big]$ such that $U(q^*) \leq \frac{\eps}{q^*}$. 
We can assume without loss of generality that $U \geq 1$. 

Integrating \eqref{munontrivial1} and using \eqref{vestimate} we find 
\begin{equation}\label{mu2a}
 U\Big( \frac{q}{2}\Big) \leq U(q) \exp \Big( K_1 {K_2}^{\frac{1}{1-\alpha}} \ln 2\Big)
\end{equation}
for all $q \in \Big(0,\frac{q^*}{2}\Big]$. For the following let $K:=\max\Big(K_1,K_2,K_3, \exp\big(K_1 K_2^{\frac{1}{1-\alpha}} \ln 2 \big)\Big)$. 

It follows in particular from \eqref{mu2a} that 
\begin{equation}\label{mu2}
U\Big( \frac{q^*}{2}\Big) \leq \frac{1}{\frac{q^*}{2}} \frac{\eps}{2} K\,.
\end{equation}
Estimate \eqref{vestimate1} also implies that
\begin{equation}\label{mu2b}
 V\Big( \frac{q^*}{2}\Big) \leq V(q^*) + K \big( \frac{q^*}{2}\big)^{\alpha} U\big( \frac{q^*}{2}\big) \leq \Big( \frac{q^*}{2}\Big)^{\alpha-1} \Big( \frac{K}{2^{1-\alpha}} + K^2 \frac{\eps}{2}
 \Big)\,.
\end{equation}
We can find $\theta>1$  such that 
for sufficiently small $\eps$ 
\begin{equation}\label{theta2}
  \frac{K}{2^{1-\alpha}} + \frac{\eps}{2} K^2
 \exp \Big( K\sum_{i=0}^{\infty} \theta^{-\frac{i}{1-\alpha}} \Big) \leq \frac{K}{\theta}\,.
\end{equation}
We can now prove by induction that
\begin{equation}\label{V1}
 V\Big(\frac{q^*}{2^{n}}\Big) \leq \frac{K}{\theta^n} \Big( \frac{q^*}{2^n}\Big)^{\alpha-1}
\end{equation}
and
\begin{equation}\label{U1}
 U\Big( \frac{q^*}{2^n}\Big) \leq \Big( \frac{q^*}{2^n}\Big)^{-1} \frac{\eps}{2^n} \exp\Big( K\sum_{i=0}^{n} \theta^{-\frac{i}{1-\alpha}}\Big)\,.
\end{equation}
In fact, \eqref{U1} for $n=1$ follows from \eqref{mu2} and \eqref{V1}  from \eqref{mu2b} and \eqref{theta2}. To go from $n$ to $n+1$  we 
we use \eqref{vestimate1}, \eqref{mu2a}  and
\eqref{theta2}  to obtain, assuming $\theta <2$, that
\begin{equation}\label{V2}
\begin{split}
 V\Big(\frac{q^*}{2^{n+1}}\Big)&\leq  V\Big(\frac{q^*}{2^n}\Big) + K\Big(\frac{q^*}{2^{n+1}}\Big)^{\alpha} U\Big(\frac{q^*}{2^{n}}\Big) \\
& \leq \frac{K}{\theta^n} \Big( \frac{q^*}{2^n}\Big)^{\alpha-1} + K^2 \Big( \frac{q}{2^{n+1}}\Big)^{\alpha-1} \frac{\eps}{2^{n+1}} \exp\Big( K \sum_{i=0}^{n} \theta^{-\frac{i}{1-\alpha}} \Big)\\
 &= \Big( \frac{q^*}{2^{n+1}}\Big)^{\alpha-1} \frac{1}{\theta^n} \Big( \frac{K}{2^{1-\alpha}} + \frac{\eps}{2} K^2 \exp \Big( K \sum_{i=0}^n \theta^{-\frac{i}{1-\alpha}} \Big)\\ 
&\leq \Big( \frac{q^*}{2^{n+1}}\Big)^{\alpha-1} \frac{K}{\theta^n}\,.
\end{split}
\end{equation}
Going back with $\eqref{V2}$ to \eqref{munontrivial1} we find
\begin{equation}\label{U2}
 \begin{split}
U\Big( \frac{q^*}{2^{n+1}}\Big) & \leq U\Big( \frac{q^*}{2^n}\Big) \exp \Big( K \theta^{-(n+1)/(1-\alpha)}\Big)\\
&  \leq\Big( \frac{q^*}{2^n}\Big)^{-1} \frac{\eps}{2^{n+1}} \exp \Big( K \sum_{i=0}^{n+1} \theta^{-i/(1-\alpha)}\Big) \,.
 \end{split}
\end{equation}
Thus, \eqref{V2} and \eqref{U2} prove \eqref{V1} and \eqref{U1}.
 In particular, we have a constant $C>0$ such that 
 $U(q^*/2^n) \leq \frac{C}{q^*}$ which gives a uniform bound on $U$. As  a consequence of \eqref{munontrivial1} we also get  a uniform bound on 
$|\partial_q U|$ and thus a contradiction to Lemma \ref{L.unonintegrable}. This finishes the proof of \eqref{munontrivial6}.

\medskip
{\it Step 4:} 
We show that
\begin{equation}\label{munontrivial8}
 \int_1^{BR} u(x)\,dx \geq cR
\end{equation}
for sufficiently large $B$ from which the claim of the Lemma follows. Indeed, choosing $q=\frac{1}{R}$ in \eqref{munontrivial6} gives
\[
 R \leq \int_1^{BR} e^{-\frac{x}{R}} u(x)\,dx + \int_{BR}^{\infty} e^{-\frac{x}{R}} u(x)\,dx\,.
\]
On the other hand, using Lemma \ref{L.uupperbound}, we have
\[
\begin{split}
 \int_{BR}^{\infty} e^{-\frac{x}{R}} u(x)\,dx &\leq \sum_{n=0}^{\infty} \int_{2^n BR}^{2^{n+1}BR}  e^{-\frac{x}{R}} u(x)\,dx\\
& \leq \sum_{n=0}^{\infty} e^{-2^nB}  \int_{2^n BR}^{2^{n+1}BR}u(x)\,dx
 \leq R \sum_{n=0}^{\infty} B 2^n  e^{-2^nB}\,.
\end{split}
\]
Since $\sum_{n=0}^{\infty} B 2^n  e^{-2^nB}<\frac 1 2$ for sufficiently large $B$  estimate \eqref{munontrivial8} follows.

finally, the lower bound in \eqref{muapriori1c} is a consequence of \eqref{munontrivial8}.
\end{proof}

\begin{lemma}\label{L.muequation}
 $\mu$ is a weak solution to \eqref{muequation}.
\end{lemma}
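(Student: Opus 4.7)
The plan is to start from the equation \eqref{uequation} for $u$, rescale by $R$, test against a smooth function supported in a compact subset of $(0,\infty)$, and identify the limit via a concentration argument in the exponential weight. Concretely, substituting $x=Rx'$, $y=Ry'$, $z=Rz'$ and using that $K$ is homogeneous of degree zero, \eqref{uequation} becomes
\begin{equation*}
(x')^2 u_R(x') \;=\; R \int_0^{x'}\!\!\int_{x'-y'}^{\infty} y' K(y',z')\, u_R(y') u_R(z')\, e^{-a^*R(y'+z'-x')}\,dz'\,dy'.
\end{equation*}
Testing against a $\phi\in C^\infty_c(0,\infty)$, exchanging the order of integration (so that $x'$ runs over $[y',y'+z']$), and setting $s=y'+z'-x'$ gives
\begin{equation*}
\int_0^\infty (x')^2 u_R(x') \phi(x')\,dx' = R\int_0^\infty\!\!\int_0^\infty y'K(y',z') u_R(y') u_R(z') \Big(\int_0^{z'}\!\phi(y'+z'-s)e^{-a^*Rs}\,ds\Big)dz'\,dy'.
\end{equation*}

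The second step is to recognize that the inner $s$-integral is a Laplace-type integral concentrating at $s=0$, so that $R\int_0^{z'}\phi(y'+z'-s)e^{-a^*Rs}\,ds \to \phi(y'+z')/a^*$ as $R\to\infty$, uniformly for $(y',z')$ in compact subsets of $(0,\infty)^2$ with $z'$ bounded away from $0$. Plugging this in, symmetrizing in $y'\leftrightarrow z'$ to replace $y'$ by $(y'+z')/2$, and writing the resulting integral on the diagonal $x=y+z$ one obtains, after passing to the subsequence $R_j\to\infty$ along which $u_{R_j}\wto\mu$,
\begin{equation*}
\int_0^\infty x^2 \phi(x)\,\mu(dx) \;=\; \frac{1}{2a^*}\int_0^\infty x\phi(x)\int_0^x K(y,x-y)\mu(dy)\mu(x-y)\,dx,
\end{equation*}
which is the weak form \eqref{weakmuequation} after dividing by $x$ in the $\phi$-direction (i.e.\ taking $\phi(x)=x\psi(x)$ for arbitrary $\psi\in C^\infty_c$).

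The main technical obstacle is justifying the passage to the limit in the $(y',z')$ integration, which has two delicate regions. First, near $z'=0$ or $y'=0$ the kernel $K$ can blow up like $(z'/y')^\alpha$; one must use Lemma \ref{L.negativemoment} (rescaled) together with the sharp upper bound from Lemma \ref{L.uupperbound} to show that the contribution from $\{y'\le \delta\}\cup\{z'\le\delta\}$ is $O(\delta^{1-\alpha})$ uniformly in $R$, so these pieces vanish in the limit $\delta\to 0$. Second, for the remaining region one needs the Laplace asymptotics $R\int_0^{z'}\phi(y'+z'-s)e^{-a^*Rs}\,ds \to \phi(y'+z')/a^*$ to hold in a sense compatible with weak convergence of $u_{R_j}$ — this is handled by splitting $s\in[0,1/\sqrt{R}]$ (where the integrand is essentially $\phi(y'+z')/a^*$ up to error $O(R^{-1/2})$) and $s\in[1/\sqrt{R},z']$ (where the exponential is exponentially small). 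For the large tails $y'+z'\gg 1$ one uses the uniform decay $u_R(y')\le C/(Ry')$ together with $e^{-a^*R(y'+z'-x')}\le e^{-a^*R z'/2}$ when $y'\ge 2x'$, which kills any tail contribution. Once these cutoffs are in place, weak convergence on compacts and continuity of $\phi$ and $K$ give the limit passage.
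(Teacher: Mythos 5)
Your argument is correct and is essentially the paper's own proof: rescale \eqref{uequation}, test against $\phi$, identify the Laplace-type weight $\psi_R(y,z)=R\int_y^{y+z}\phi(x)e^{a^*R(x-(y+z))}\,dx \to \phi(y+z)/a^*$, control the singular region of small $z$ via the rescaled negative-moment bound (Lemma \ref{L.negativemoment}) and Lemma \ref{L.uupperbound}, and pass to the limit using the weak convergence $u_{R_j}\wto\mu$. The only cosmetic differences are your choice of cutoffs ($z\le\delta$ fixed and $s\le 1/\sqrt{R}$, versus the paper's single cut at $z=L/R$ with the quantitative estimate $|\psi_R-\phi(y+z)/a^*|\le C/R+\|\phi\|_{L^\infty}e^{-a^*Rz}$) and your appeal to a pointwise bound $u_R(y)\le C/(Ry)$ for the tails, which is neither established nor needed since the compact support of $\phi$ together with the factor $e^{a^*R(x-(y+z))}$ already annihilates that region.
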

\begin{proof}
 Equation \eqref{muequation} means that for $\phi \in C_0([0,\infty))$ it holds
\begin{equation}\label{weakmuequation}
 \int_0^{\infty} x^2 \phi(x) \mu(x)\,dx = \frac{1}{a^*} \int_0^{\infty} \int_0^{\infty} K(x,y) y \mu(x)\mu(y) \phi(x+y)\,dx\,dy\,.
\end{equation}
We now take $\phi \in C_0^1([0,\infty))$. From \eqref{uequation} we deduce that $u_R$ satisfies
\begin{equation}\label{urequation}
\begin{split}
 x^2 u_R(x) &= \frac{1}{R^2} \int_0^{Rx} \int_{Rx-y}^{\infty} K(y,z) y u(y) u(z) e^{a^*(Rx-(y+z))} \,dz\,dy \\
& = R \int_0^x \int_{x-y}^{\infty} K(y,z) y u_R(y) u_R(z) e^{a^*R(x-(y+z))}\,dz\,dy\,.
\end{split}
\end{equation}
Hence, 
\begin{equation}\label{urweakequation}
 \int_0^{\infty} x^2 u_R(x)\phi(x)\,dx = \int_0^{\infty} \int_0^{\infty} K(y,z) y u_R(y) u_R(z) \psi_R(y,z) \,dy\,dz
\end{equation}
with
\begin{equation}\label{psidef}
 \psi_R(y,z) := R \int_y^{y+z} \phi(x) e^{a^*R(x-(y+z))}\,dx\,.
\end{equation}
The function $\psi_R$ satisfies
\begin{equation}\label{psi1}
 |\psi_R(y,z)| \leq \|\phi\|_{L^{\infty}} R z\,
\end{equation}
and
\begin{equation}\label{psi2}
 \psi_R(y,z) = \frac{1}{a^*}\int_0^{a^*Rz} \phi\Big( y+z - \frac{\xi}{a^*R}\Big)e^{-\xi}\,d\xi \to \frac{1}{a^*}\phi(y+z) \quad \mbox{ as } R \to \infty \mbox{ for all }y,z>0\,.
\end{equation}
Furthermore
\begin{equation}\label{psi3}
 \begin{split}
  \Big| \psi_R(y,z) - \frac{1}{a^*}\phi(y+z)\Big| & \leq \Big| \frac{1}{a^*} \int_0^{a^*Rz} \Big[\phi\Big(y+z - \frac{\xi}{a^*R}\Big) -\phi(y+z)\Big] e^{-\xi}\,d\xi\Big|\\
& \qquad + \Big| \int_{a^*Rz}^{\infty} \frac{\phi(y+z)}{a^*} e^{-\xi}\,d\xi\Big|\\
& \leq\|\phi'\|_{L^{\infty}} \int_0^{a^*Rz} \frac{\xi}{a^*R} e^{-\xi}\,d\xi + \|\phi\|_{L^{\infty}} e^{-a^*Rz}\\
& \leq \|\phi'\|_{L^{\infty}}  \frac{C}{R} + \|\phi\|_{L^{\infty}} e^{-a^*Rz}\,.
 \end{split}
\end{equation}

For a large constant $L$ 
we split the integral on the right hand side of \eqref{urweakequation} into the parts
\[
 \int_0^{\infty}\,dy \int_0^{L/R} \,dz \dots + \int_0^{\infty}\,dy \int_{L/R}^{\infty}\,dz \dots =:(I)+(II)\,.
\]
We recall that due to \eqref{ubsmallx} we have $\int_0^L x^{1-\alpha} u(x)\,dx \leq C_L$. Hence
\begin{equation}\label{psi4}
 \int_0^{L/R} x^{1-\alpha} u_R(x)\,dx = R^{\alpha-2} \int_0^L x^{1-\alpha} u(x)\,dx \leq \frac{C_L}{R^{2-\alpha}}\,.
\end{equation}
Second, Lemma \ref{L.uupperbound} implies  $\int_{L/R}^A u_R(x)\,dx \leq C$. Hence $u_R \chi_{\{x \geq L/R\}} \wto \mu$ on
$[0,A]$.  Combining this with  properties \eqref{psi1}-\eqref{psi3} and \eqref{Kassump1} we find
\begin{equation}\label{urweak1}
 |(I)|\leq C R \int_0^{L/R} u_R(z) z^{1-\alpha} \,dz \leq \frac{C_L}{R^{1-\alpha}} \to 0 \qquad \mbox{ as } R \to \infty \mbox{ for fixed }L\,
\end{equation}
and
\begin{equation}\label{urweak2}
 \limsup_{R\to\infty} \Big | (II) - \int_0^{\infty} \int_0^{\infty} K(y,z) y \mu(y)\mu(z)\frac{1}{a^*}\psi(y+z)\,dy\,dz\Big| \leq C e^{-a^*L}\,.
\end{equation}
Since $L$ was arbitrary the desired result follows.
\end{proof}

\subsection{Uniqueness of the prefactor}\label{S.preuni}

In this section we are going to show that under additional assumptions on the kernel $K$ the solution to \eqref{muequation} is unique.

\begin{lemma}\label{L.profunique}
Let $K$ satisfy the assumptions of Theorem \ref{T4}.
Then, if $\eps>0$ is sufficiently small, the only solution to \eqref{muequation} that satisfies  \eqref{muapriori1b} and \eqref{muapriori1c} is 
the constant one, i.e. $\mu \equiv \mu_0:=2a^*/\int_0^1 K(s,1-s)\,ds$.
\end{lemma}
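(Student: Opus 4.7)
My plan is to reduce \eqref{muequation} to a first-order linear ODE via the Laplace transform and then exploit the smallness of $\eps$ through a Gronwall/integrating-factor argument. For any solution $\mu$ satisfying \eqref{muapriori1b}-\eqref{muapriori1c}, set $\hat\mu(q):=\int_0^\infty e^{-qx}\mu(x)\,dx$, which is finite for every $q>0$, and the two bounds translate into $c/q\leq\hat\mu(q)\leq C/q$. Taking the Laplace transform of \eqref{muequation} in the distributional sense \eqref{weakmuequation} yields
\[
 -\hat\mu'(q) \;=\; \frac{1}{2a^*}\iint_{(0,\infty)^2} K(y,z)\,e^{-q(y+z)}\mu(y)\mu(z)\,dy\,dz\,.
\]
A direct computation using the homogeneity of degree zero of $K$ (change of variables $y=s\sigma$, $z=(1-s)\sigma$) shows that the constant $\mu\equiv\mu_0$ gives $\hat\mu_0(q)=\mu_0/q$ and is indeed a fixed point, consistent with the definition $\mu_0=2a^*/\int_0^1 K(s,1-s)\,ds$.

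Given two candidate solutions $\mu_1,\mu_2$, I would set $d:=\mu_1-\mu_2$. The identity $\mu_1(x{-}y)\mu_1(y)-\mu_2(x{-}y)\mu_2(y)=\mu_1(x{-}y)d(y)+d(x{-}y)\mu_2(y)$, combined with the symmetry of $K$ under $y\leftrightarrow x-y$, symmetrizes to the \emph{linear} equation
\[
 x\,d(x) \;=\; \frac{1}{2a^*}\int_0^x K(y,x{-}y)(\mu_1{+}\mu_2)(x{-}y)\,d(y)\,dy\,,
\]
whose Laplace transform is the first-order inhomogeneous ODE
\[
 -\hat d\,'(q) \;=\; \frac{1}{a^*}(\hat\mu_1+\hat\mu_2)(q)\,\hat d(q) \;+\; R(q)\,,
\]
with remainder $R(q):=\frac{1}{2a^*}\iint(K(y,z){-}2)\,e^{-q(y+z)}(\mu_1{+}\mu_2)(z)\,d(y)\,dy\,dz$ that collects the deviation of $K$ from the constant kernel $2$.

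For the clean case $K\equiv 2$ (so $R\equiv 0$), the integrating factor $F(q):=\exp\bigl(\int_1^q(\hat\mu_1+\hat\mu_2)/a^*\,d\tilde q\bigr)$ makes $\hat d\cdot F$ constant. Since $(\hat\mu_1+\hat\mu_2)/a^*\geq 2c/(a^*q)$, one finds $F(q)\leq q^{\beta}$ as $q\to 0^+$ with $\beta:=2c/a^*>1$ (for $c$ close to $\mu_0\approx a^*$), while the a priori estimate $|\hat d(q)|\leq 2C/q$ gives $|\hat d(q)F(q)|\leq 2Cq^{\beta-1}\to 0$. This forces the conserved quantity $\hat d\cdot F$ to vanish, hence $\hat d\equiv 0$ and $\mu_1=\mu_2$. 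For general $K$ close to $2$, variation of constants gives $\hat d(1)F(1)=-\int_0^1 R(\tilde q)F(\tilde q)\,d\tilde q$, and using \eqref{Kassump4} together with a dyadic interpolation (in the spirit of Lemma \ref{L.munontrivial}, Step 2) one can bound $|R(q)|\leq (C\eps/a^*)\bigl(\widehat{|d|}_\alpha\widehat{\mu_1{+}\mu_2}_{-\alpha}+\widehat{|d|}_{-\alpha}\widehat{\mu_1{+}\mu_2}_\alpha\bigr)(q)$ in terms of weighted Laplace transforms; the goal is to re-express this as $\|\hat d\|_*\leq C_0\eps\,\|\hat d\|_*$ in a suitable weighted norm, which forces $\hat d\equiv 0$ when $\eps<1/C_0$.

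The main obstacle I anticipate is twofold. First, the argument above requires the constant $c$ in $\hat\mu_i(q)\geq c/q$ to be genuinely close to $\mu_0$ (so that $\beta=2c/a^*>1$); the bare hypothesis \eqref{muapriori1c} only guarantees positivity of $c$, so a preliminary self-improvement step is needed to show that for $\eps$ small any solution satisfies $|\hat\mu(q)-\mu_0/q|=O(\eps/q)$. This should be achievable by inserting \eqref{Kassump4} into \eqref{muequation} and iterating, analogous to the bootstrap of Lemma \ref{L.munontrivial}. Second, the quantitative control of $R(q)$ in the norm $\|\cdot\|_*$ requires choosing $\|\cdot\|_*$ so that both $\widehat{|d|}_{\pm\alpha}$ and $\hat d(1)F(1)$ are dominated by $\|\hat d\|_*$; the interpolation estimate from Lemma \ref{L.munontrivial} suggests the weighted sup norm $\|\hat d\|_*:=\sup_{q\in(0,q_0]}q\,|\hat d(q)|$ as a natural candidate, but verifying that the bound on $R$ closes in this norm is where the bulk of the technical work lies.
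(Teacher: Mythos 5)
Your skeleton coincides with the paper's: Laplace transform, an a priori estimate showing $\hat\mu(q)$ is within $O(\eps/q)$ of $\mu_0/q$, an integrating factor behaving like $q^{2\mu_0}\sim q^2$ near $q=0$, and a contraction in the weighted norm $\sup_q q|\cdot|$. (The paper linearizes around the constant, writing $\nu=\mu-\mu_0$ and keeping a quadratic remainder controlled by $\|V\|^2$, whereas you compare two solutions and get an exactly linear equation for $d$; this is a harmless variant.) However, the two steps you defer to ``anticipated obstacles'' are precisely the substance of the proof, and one of them would fail as you propose to carry it out.

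First, the self-improvement $|\hat\mu(q)-\mu_0/q|\le C\eps/q$ is not a routine bootstrap: the paper proves it (estimate \eqref{Wapriori}) by a contradiction argument that compares the Riccati equation $-\partial_q\tilde U=\tilde U^2+O(\eps/q^2)$ with the exact solution of $-\partial_x\bar G=\bar G^2$ on dyadic scales and shows that any deviation of size $K\eps$ at some $q^*$ is amplified by a factor $\theta/2>1$ at each halving of $q$, contradicting finiteness of $\tilde U$. Without this step your exponent $\beta=2c/a^*$ is only known to be positive, not $>1$, and the integrating-factor argument collapses. Second, and more seriously, your proposed bound $|R(q)|\le C\eps\bigl(\widehat{|d|}_\alpha\,\widehat{(\mu_1{+}\mu_2)}_{-\alpha}+\dots\bigr)$ puts absolute values on $d$, and $\widehat{|d|}$ is \emph{not} controlled by $\|\hat d\|_*=\sup_q q|\hat d(q)|$. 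The only available pointwise bound is $|d|\le\mu_1+\mu_2$, which yields $|R(q)|\le C\eps/q^2$ and hence, after integration, only $\|\hat d\|_*\le C\eps$ --- the a priori bound again, not a contraction proportional to $\|\hat d\|_*$. The paper avoids this by never taking absolute values on $\nu$: the key estimates \eqref{uni3}--\eqref{uni4} are proved by a duality/compactness argument (rescale, extract $K_n\to K$ locally uniformly, use $\nu_n\rightharpoonup 0$ against a test function $\psi_n$ with quantified decay) that exploits cancellation in $\int\psi_n\,\nu_n$. That compactness argument is the missing idea; without it, or a substitute that preserves the sign information in $d$, your norm estimate does not close.
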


\begin{proof}
 
Our proof is somewhat similar in spirit, though simpler, as the proof of uniqueness of self-similar solutions to the coagulation equation with kernels satisfying
\eqref{Kassump4} and \eqref{Kassump5}. We can assume without loss of generality that $2 a^*=1$ such that $|\mu_0-1|\leq C \eps$. 
We introduce the Laplace-transform of $\mu$ via
\[
 \tilde U(q)=\int_{(0,\infty)} e^{-qx}\mu(x)\,dx\,, \qquad q>0\,.
\]
Using the usual dyadic decomposition we can derive from \eqref{muapriori1b} that
\begin{equation}\label{muapriori2}
 \int_0^x y^{-\alpha} \mu(y)\,dy \leq C x^{1-\alpha}\qquad \mbox{ for all } x>0\,.
\end{equation}
Furthermore, $\tilde U$
satisfies the equation
\begin{equation}\label{Uequation}
 -\partial_q\tilde U= {\tilde U}^2 + \int_0^{\infty} \int_0^{\infty} \Big(K(x,y)-1\Big) e^{-q(x+y)} \mu(x)\mu(y)\,dx\,dy=: {\tilde U}^2 +{\cal M}\,.
\end{equation}

We first estimate ${\cal M}$.
By symmetry, using assumption \eqref{Kassump4}, we have
\[
 {\cal M} \leq C \eps \int_0^{\infty} \int_0^{x/2} \Big( \frac{x}{y}\Big)^{\alpha} \mu(x-y) \mu(y) e^{-qx}\,dy\,dx\,.
\]
For any  $R>0$ we estimate, using \eqref{muapriori1b} and \eqref{muapriori2}, 
\[
\begin{split}
 \int_R^{2R} \int_0^{x/2} \Big(\frac{x}{y}\Big)^{\alpha} \mu(x-y)\mu(y)\,dy\,dx
& = \int_0^R \int_{\max(R,2y)}^{2R} \Big(\frac{x}{y}\Big)^{\alpha} \mu(x-y)\mu(y)\,dx\,dy\\
& \leq C R^{\alpha} \int_0^R \frac{\mu(y)}{y^{\alpha}}\,dy \int_{0}^{2R} \mu(x)\,dx  \leq C R^2\,.
\end{split}
\]
and this implies
\begin{equation}\label{uni2}
 \begin{split}
{\cal M}& \leq C \eps \sum_{n=-\infty}^{\infty} \int_{2^n}^{2^{n+1}} e^{-qx} \int_0^{x/2}   \Big(\frac{x}{y}\Big)^{\alpha} \mu(x-y)\mu(y)\,dy\,dx\\
& \leq C\eps \sum_{n=-\infty}^{\infty} e^{-q2^n} \big( 2^n\big)^2 \leq \frac{C \eps}{q^2}\,.
 \end{split}
\end{equation}
Next, we claim that
\begin{equation}\label{Wapriori}
\Big|\tilde U(q)-\frac{1}{q}\Big| \leq \frac{C\eps}{q} \qquad \mbox{ for all } q>0\,.
\end{equation}
Suppose otherwise. Then there exists for any arbitrarily large $K$ a number  $q^*>0$ such that 
$|\tilde U(q^*)-\frac{1}{q^*}|> \frac{K\eps}{q^*}$. Define $G(x):=q^*U(q^*x)$ such that
\[
 -\partial_x G(x) = G(x)^2 + O\Big(\frac{\eps}{x^2}\Big) \qquad \mbox{ and } |G(1)-1|\geq K\eps\,.
\]
Let $\bar G$ be the solution of $-\partial_x \bar G = {\bar G}^2$ with $\bar G(1)=G(1)$, that 
is $\bar G (x) = \frac{G(1)}{1+G(1)(x-1)}$ and $\bar G(1/2)= \frac{2G(1)}{2-G(1)}$. By the standard theory of 
differential equations we find $|G(1/2) - {\bar G}(1/2)|\leq C\eps$ and thus
\[
 \big| G(1/2) -2\big| \geq \Big| \frac{2G(1)}{2-G(1)} -2\Big| - C\eps = \frac{4}{|2-G(1)|}|G(1)-1| - C\eps\,.
\]
If $G(1) \geq 2$ we get immediately a contradiction to the fact that $\tilde U(q)$ is defined for all $q>0$. On the other hand,
we know  that $G(1) \geq c_1>0$. Hence we obtain that
\[
 \big| G(1/2) -2\big| \geq \theta |G(1)-1| \qquad \mbox{ for some } \theta>2\,,
\]
if $K$ is sufficiently large. 
Thus, we have obtained
\[
 \Big| U \Big( \frac{q^*}{2}\Big) - \frac{1}{\frac{q^*}{2}}\Big| >\frac{\theta}{2} \frac{K\eps}{\frac{q^*}{2}}\,.
\]
We can iterate the argument to get
\[
 \Big| U \Big( \frac{q^*}{2^n}\Big) - \frac{1}{\frac{q^*}{2^n}}\Big| > \Big(\frac{\theta}{2}\Big)^n\frac{K\eps}{\frac{q^*}{2^n}}\,,
\]
which gives again a contradiction since $\theta/2>1$. Hence, we have proved \eqref{Wapriori}.

\medskip
To proceed we introduce
\[
 \nu(x):=\mu(x) - \mu_0 \qquad \mbox{ with } \qquad \mu_0 = \frac{1}{\int_0^1 K(s,1-s)\,dx}
\]
and
\[
 V(q)=\int_0^{\infty} e^{-qx}\nu(x)\,dx\,.
\]
Since 
\[
 x\nu(x) = 2\mu_0 \int_0^x K(y,x-y)\nu(y)\,dy + \int_0^x K(y,x-y) \nu(x-y)\nu(y)\,dy
\]
we find
\[
\begin{split}
 -\partial_q V& = \frac{2\mu_0}{q} V + 2 \mu_0 \int_0^{\infty} e^{-qx} \int_0^x \Big( K(y,x-y)-1\Big)\nu(y)\,dy\,dx\\
&\qquad + 
\int_0^{\infty} e^{-qx} \int_0^x K(y,x-y) \nu(x-y)\nu(y)\,dy\,dx\,.
\end{split}
\]
Integrating this equation gives
\begin{equation}\label{uni3a}
\begin{split}
 q^{2\mu_0} V(q)& = -2 \mu_0 \int_0^q \eta^{2\mu_0}  \int_0^{\infty} e^{-\eta x} \int_0^x \Big(\big( K(y,x -y) -1\big)\nu(y)\\
&\qquad\qquad + K(y,x-y) \nu(x-y)\nu(y)\Big) \,dy\,dx\,d\eta\,.
\end{split}
\end{equation}
We introduce the norm
\begin{equation}\label{normdef}
 \|V\|:=\sup_{q>0} | qV(q)|
\end{equation}
such that $|\mu_0-1|\leq C\eps$ and \eqref{Wapriori} imply $\|V\| \leq C\eps$. Next, we claim that
\begin{equation}\label{uni3}
 \sup_{q>0} \Big| q \int_0^q \Big(\frac{\eta}{q}\Big)^{2\mu_0}  \int_0^{\infty} e^{-\eta x} \int_0^x \big( K(y,x-y) -1\big)\nu(y)\,dy\,dx\,d\eta \Big| \leq C \eps \|V\|
\end{equation}
and
\begin{equation}\label{uni4}
 \sup_{q>0} \Big| q \int_0^q \Big( \frac{\eta}{q}\Big)^{2\mu_0}  \int_0^{\infty} e^{-\eta x } \int_0^x  K(y,x-y) \nu(x-y)\nu(y)\,dy\,dx\,d\eta \Big| \leq C \eps \|V\|^2\,.
\end{equation}
Then, \eqref{uni3}, \eqref{uni3a} and  \eqref{uni4} imply 
\[
 \| V\| \leq C \eps \|V\| + C \|V\|^2 
\]
 and since $\|V\|\leq C\eps$ it follows that  $V=0$ if $\eps$ is sufficiently small.

It remains to prove \eqref{uni3} and \eqref{uni4} which we will do by contradiction. By scaling we can assume that $\eps=1$. To prove \eqref{uni3} we thus  assume that there
exist  sequences $(q_n), (\nu_n)$ and $(K_n)$ such that $|q_n V(q_n)| \to 0$ as $n \to \infty$ while
\begin{equation}\label{uni5}
\Big|  q_n \int_0^{q_n} \Big(\frac{\eta}{q_n}\Big)^{2\mu_0}  \int_0^{\infty} e^{-\eta x} \int_0^x \big( K_n(y,x-y) -1\big)\nu_n(y)\,dy\,dx\,d\eta\Big| \geq 1\,.
\end{equation}
 By assumption \eqref{Kassump5} we can assume that there exists $K=K(x,y)$ such that $K_n \to K$ locally uniformly on $(0,\infty)^2$ and $K$ satisfies the same bounds as $K_n$. 
We change variables in \eqref{uni5}, such that $\hat x=q_n x$, $\hat y = q_n y$ and $\hat \eta = \eta/q_n$, define $\hat \nu_n(\hat y)=\nu_n(y)$ and obtain, dropping the hats in the notation, that
\eqref{uni5} turns into
\begin{equation}\label{uni6}
 \Big|\int_0^1 \eta^{2\mu_0} \int_0^{\infty} e^{-\eta x} \int_0^x \big(K_n(y,x-y)-1\big)\nu_n(y)\,dy\,dx\,d\eta\Big | \geq 1\,. 
\end{equation}
Changing the order of integration \eqref{uni6} gives
\begin{equation}\label{uni7}
 \Big| \int_0^{\infty} \nu_n(y) \int_y^{\infty} \int_0^1 e^{-\eta x}\eta^{2\mu_0} \big( K_n(y,x-y)-1\big) \,d\eta \,dx\,dy\Big| \geq 1\,.
\end{equation}
If we define $\phi(x):=\int_0^1 e^{-\eta x} \eta^{2\mu_0}\,d\eta$ we have that $\phi \in C^0(0,\infty) \cap L^{\infty}(0,\infty)$ with $\phi(x) \leq \frac{C}{x^{1+2\mu_0}}$ as $x \to \infty$. The bound \eqref{Kassump4}
then implies that for $\psi_n(y):=\int_y^{\infty} \phi(x) \big( K_n(y,x-y)-1\big) \,dx$ we have
$\psi_n(y) \leq \frac{C}{y^{\alpha}}$ for $y \geq 1$, $\psi_n(y) \leq \frac{C}{y^{2\mu_0}}$ for $y \geq 1$ and that $\psi_n\to \psi$ locally uniformly on $(0,\infty)$.

By assumption $q_n V(q_n) \to 0$. This means, after using the rescaling above, that $\nu_n \wto 0$ as $n \to \infty$. The bounds on $\nu_n$ and $\psi_n$ and the fact that $2 \mu_0 \geq 2-C\eps$ 
imply that $\int_0^{\infty} \psi_n(y)\nu_n(y)\,dy \to 0$ which gives a contradiction to \eqref{uni7} and finishes the proof of \eqref{uni3}.

The proof of \eqref{uni4} follows similarly and we omit the details here.
\end{proof}

\bigskip
{\bf Acknowledgment.} The authors acknowledge support through the CRC 1060 {\it The mathematics of emergent effects } at the University of Bonn, that is funded through the
German Science Foundation (DFG).

{\small
\bibliographystyle{alpha}%
\bibliography{../coagulation}%
}

\end{document}